\documentclass[journal]{IEEEtran}
%
\usepackage{amsmath,amsfonts,amsthm,amssymb,color}
\usepackage{mathrsfs}
\usepackage{fancybox}
\usepackage{tikz}
\usepackage{enumerate}
\usepackage{bm}
\usepackage{subeqnarray}
\usepackage{cases}
\usepackage{pifont}
\usepackage{graphicx}
\usepackage{epstopdf}
\usepackage{authblk}
\usepackage[procnumbered,ruled,boxed]{algorithm2e}
\usepackage{etoc}

\DontPrintSemicolon
\SetKw{KwAnd}{and}
\SetProcNameSty{textsc}
\SetFuncSty{textsc}
\SetKwInOut{Input}{Input}
\SetKwInOut{Output}{Output}
\SetKwRepeat{Do}{do}{while}
\usepackage{graphics}
\usepackage{hyperref}
\usepackage{cleveref}
\usepackage{thm-restate}

\usepackage{bbm}
\usepackage{tablefootnote}

\usepackage[margin=1.0in]{geometry}

\newtheorem{theorem}{Theorem}
\newtheorem{Assumption}{Theorem}
\newtheorem{Definition}{Theorem}

\newtheorem{lemma}[theorem]{Lemma}

\newtheorem{assumption}[Assumption]{Assumption}

\theoremstyle{definition}
\newtheorem{definition}[Definition]{Definition}
\newtheorem{remark}[theorem]{Remark}

\newenvironment{fminipage}%
{\begin{Sbox}\begin{minipage}}%
		{\end{minipage}\end{Sbox}\fbox{\TheSbox}}

\def\seq#1{\begin{equation*}\small\begin{split}#1\end{split}\end{equation*}}
\def\seql#1#2{\begin{equation}{#1}\small\begin{split}#2\end{split}\end{equation}}

\def\prob#1{\mbox{\rm \textbf{Pr}}\left[ #1 \right]}

\def\expec#1{{\mathbb{E}}\left[ #1 \right]}

\def\defeq{\stackrel{\mathrm{def}}{=}}

\def\pr#1{\left( #1 \right ) }
\def\br#1{\left[ #1 \right ] }
\def\dr#1{\left\{#1\right\}}
\def\jr#1{\left<#1\right>}

\def\floor#1{\left\lfloor #1 \right\rfloor}

\def\abs#1{\left|#1  \right|}

\def\calA{\mathcal{A}}
\def\calB{\mathcal{B}}

\def\calD{\mathcal{D}}
\def\calE{\mathcal{E}}
\def\calF{\mathcal{F}}
\def\calG{\mathcal{G}}

\def\calT{\mathcal{T}}

\def\calN{\mathcal{N}}
\def\calR{\mathcal{R}}
\def\calV{\mathcal{V}}
\def\calP{\mathcal{P}}
\def\calW{\mathcal{W}}
\def\calZ{\mathcal{Z}}

\newcommand\PPi{\boldsymbol{\Pi}}

\newcommand\aalpha{\boldsymbol{\alpha}}

\def\aa{\pmb{\mathit{a}}}
\newcommand\bb{\boldsymbol{\mathit{b}}}

\newcommand\dd{\boldsymbol{\mathit{d}}}
\newcommand\ee{\boldsymbol{\mathit{e}}}

\renewcommand\gg{\boldsymbol{\mathit{g}}}

\newcommand\pp{\boldsymbol{\mathit{p}}}
\newcommand\qq{\boldsymbol{\mathit{q}}}

\renewcommand\ss{\boldsymbol{\mathit{s}}}

\newcommand\uu{\boldsymbol{\mathit{u}}}
\newcommand\vv{\boldsymbol{\mathit{v}}}
\newcommand\ww{\boldsymbol{\mathit{w}}}
\newcommand\yy{\boldsymbol{\mathit{y}}}
\newcommand\zz{\boldsymbol{\mathit{z}}}
\newcommand\xx{\boldsymbol{\mathit{x}}}

\renewcommand\AA{\boldsymbol{\mathit{A}}}
\newcommand\BB{\boldsymbol{\mathit{B}}}
\newcommand\CC{\boldsymbol{\mathit{C}}}
\newcommand\DD{\boldsymbol{\mathit{D}}}
\newcommand\EE{\boldsymbol{\mathit{E}}}
\newcommand\FF{\boldsymbol{\mathit{F}}}

\newcommand\II{\boldsymbol{\mathit{I}}}
\newcommand\JJ{\boldsymbol{\mathit{J}}}

\newcommand\MM{\boldsymbol{\mathit{M}}}

\newcommand\PP{\boldsymbol{\mathit{P}}}
\newcommand\QQ{\boldsymbol{\mathit{Q}}}
\newcommand\RR{\boldsymbol{\mathit{R}}}

\newcommand\UU{\boldsymbol{\mathit{U}}}
\newcommand\WW{\boldsymbol{\mathit{W}}}
\newcommand\VV{\boldsymbol{\mathit{V}}}
\newcommand\XX{\boldsymbol{\mathit{X}}}
\newcommand\YY{\boldsymbol{\mathit{Y}}}

\newcommand\xxhat{\widehat{\boldsymbol{\mathit{x}}}}
\newcommand\XXhat{\widehat{\boldsymbol{\mathit{X}}}}

\newcommand \yyhat{\widehat{\yy}}


\def\nm#1#2{\left\| #2 \right\|_{#1}}

\def\nt#1{\left\| #1 \right\|_2}
\def\ni#1{\left\|#1\right\|_{\infty}}
\def\nf#1{\left\|#1\right\|}

\def\la{\lambda}
\def\La{\Lambda}
\def\tp{^\top}








\newcommand \arrlf{\leftarrow}
\newcommand \arr{\rightarrow}

\def \Diag#1{\textbf{Diag}\left(#1\right)}

\newcommand{\zero}{\mathbf{0}}
\newcommand{\one}{\mathbf{1}}

\newcommand \inv{^{-1}}

\def\MatSize#1#2{\mathbb{R}^{#1\times#2}}
\newcommand \Real{\mathbb{R}}

\def \trace#1{\textbf{tr}\left(#1\right)}

\newcommand\YYhat{\widehat{\YY}}

\newcommand\na{\nabla}

\newcommand\alp{\alpha}

\newcommand\xa{\overline{\xx}}
\newcommand\ya{\overline{\yy}}

\newcommand\compress{\textsc{Compress}}


\newcommand{\mm}[2]{{\left\vert\kern-0.25ex\left\vert\kern-0.25ex\left\vert #2
    \right\vert\kern-0.25ex\right\vert\kern-0.25ex\right\vert}_{#1}}
\newcommand\mmb{{\big\vert\kern-0.25ex\big\vert\kern-0.25ex\big\vert}}
\newcommand\mmB{{\Big\vert\kern-0.25ex\Big\vert\kern-0.25ex\Big\vert}}
\def\mt#1{\mm{2}{#1}}
\def \mR#1{\mm{\rm R}{#1}}
\def \mC#1{\mm{\rm C}{#1}}
\def \nR#1{\nm{\rm R}{#1}}
\def \nC#1{\nm{\rm C}{#1}}
\newcommand\mmhalf{\vert\kern-0.25ex\vert\kern-0.25ex\vert}

\newcommand\rmR{{\rm R}}
\newcommand\rmC{{\rm C}}
\newcommand\PR{\PPi_{\rmR}}
\newcommand\PC{\PPi_{\rmC}}

\newcommand\almax{\widehat{\alpha}}
\newcommand\EY{\EE}
\def\sign#1{{\rm sign\pr{#1}}}

\newcommand\prt{\xi}
\newcommand\va{\widehat{\vv}}
\newcommand\vr{\overline{\vv}}
\newcommand\RL{\sigma}

\newcommand\degR{r}
\newcommand\gratemp{\dd}
\newcommand\AW{\widetilde{\La}}
\newcommand\RW{\widetilde{\La}_{\rmR}}
\newcommand\IW{\La}
\newcommand\RU{\overline{\RR}}
\newcommand\RP{\widehat{\RR}}
\newcommand\CQ{\widehat{\CC}}
\newcommand\Vara{V_1}
\newcommand\Varb{V_2}
\newcommand\Varc{V_3}
\newcommand\Vard{V_4}
\newcommand\Vare{V_5}
\newcommand\Varf{V_6}
\newcommand\ro{\calV}
\newcommand\alb{\widetilde{\alpha}}
\newcommand\vo{\widetilde{\vv}}
\newcommand\ve{\widehat{\vv}}
\newcommand\cb{\sigma'}
\newcommand\vb{\vr'}
\newcommand\Ab{\widetilde{\AA}}
\newcommand\pt{\psi}
\newcommand\gtil{\widetilde{g}}
\newcommand\db{\widetilde{\dd}}

\newcommand \Rb{\RR_{\beta}}
\newcommand \Cg{\CC_{\gamma}}
\newcommand \UR{\UU_{\rm R}}

\newcommand \EX{\WW}
\newcommand \eX{\ww}

\newcommand\PUSH{\textsc{PUSH}}
\newcommand\PULL{\textsc{PULL}}
\newcommand\Subroutine{\textbf{procedure }}
\newcommand\EndSubroutine{\textbf{end procedure }}
\newcommand\tabb{\hspace{0.4cm}}

\def \gF#1{\na \FF(\XX^{#1})}

\def\eq#1{\begin{align*}
             #1
          \end{align*}}

\newcommand\vecr{\ss_{\rm R}}
\newcommand\vecc{\ss_{\rm C}}

%
\ifCLASSINFOpdf
\else
\fi
%
%


\hyphenation{op-tical net-works semi-conduc-tor fact}

\begin{document}

\title{Compressed Gradient Tracking for Decentralized Optimization Over General Directed Networks\thanks{{Lei Shi was partially supported by Shanghai Science and Technology Program [Project No. 20JC1412700 21JC1400600 and 19JC1420101] and National Natural Science Foundation of China (NSFC) under Grant No. 12171093. Shi Pu was supported by  Shenzhen Research Institute of Big Data (SRIBD) [Fund No.J00120190011] and  NSFC under Grant No. 62003287. Ming Yan was supported by  NSF grant DMS-2012439. Most work was done while the first author was visiting School of Data Science, The Chinese University of Hong Kong, Shenzhen. (Corresponding authors:
Shi Pu; Ming Yan.) }
		}
}
\author{Zhuoqing Song, Lei Shi, Shi Pu and Ming Yan
\thanks{Zhuoqing Song is with Shanghai Center for Mathematical Sciences, Fudan University, Shanghai, China. e-mail: zqsong19@fudan.edu.cn}
\thanks{Lei Shi is with School of Mathematical Sciences, Shanghai Key Laboratory for Contemporary Applied Mathematics, Fudan University, Shanghai, China. e-mail: leishi@fudan.edu.cn}
\thanks{Shi Pu is with School of Data Science, Shenzhen Research Institute of Big Data, The Chinese University of Hong Kong, Shenzhen, China. e-mail: pushi@cuhk.edu.cn}
\thanks{Ming Yan is with Department of Computational Mathematics, Science and Engineering and Department of Mathematics, Michigan State University, East Lansing, MI 48864 USA. e-mail: myan@msu.edu}
}

\markboth{Published on IEEE Transactions on Signal Processing, 70(2022), 1775--1787}%
{}
%



\maketitle
\etocdepthtag.toc{mtchapter}
\etocsettagdepth{mtchapter}{subsection}
\etocsettagdepth{mtappendix}{none}
\begin{abstract}
    In this paper, we propose two communication-efficient decentralized optimization algorithms over a  general directed multi-agent network.
    The first algorithm, termed Compressed Push-Pull (CPP), combines the gradient tracking Push-Pull method with communication compression.
    We show that CPP is applicable to a general class of unbiased compression operators and achieves linear convergence rate for strongly convex and smooth objective functions.
    The second algorithm is a broadcast-like version of CPP (B-CPP), and it also achieves linear convergence rate under the same conditions on the objective functions.
    B-CPP can be applied in an asynchronous broadcast setting and further reduce communication costs compared to CPP. Numerical experiments complement the theoretical analysis and confirm the effectiveness of the proposed methods.
\end{abstract}

\begin{IEEEkeywords}
    Decentralized optimization,
    compression,
    directed graphs,
    first-order methods,
    linear convergence.
\end{IEEEkeywords}

%
\IEEEpeerreviewmaketitle

\section{Introduction}
%
%
%
%
%
%

\IEEEPARstart{I}{n} this paper, we focus on solving the decentralized optimization problem, where a system of $n$ agents, each having access to a private function $f_i(\xx)$, work collaboratively to obtain a consensual solution to the following problem:
\begin{equation}\label{eq: object}
  \min_{\xx\in\Real^p} f(\xx) = \frac{1}{n}\sum_{i=1}^n f_i(\xx),
\end{equation}
where $\xx$ is the global decision variable.
The $n$ agents are connected through a general directed network and only communicate directly with their immediate neighbors.
The problem (\ref{eq: object}) has received much attention in recent years due to its wide applications in distributed machine learning \cite{cohen2017projected,forrester2007multi,nedic2017fast}, multi-agent target seeking \cite{chen2012diffusion,pu2016noise}, and wireless networks \cite{baingana2014proximal,cohen2017distributed,mateos2012distributed}, among many others.
For example, the rapid development of distributed machine learning involves data whose size is getting increasingly large, and they are usually stored across multiple computing agents that are spatially distributed. Centering large amounts of data is often undesirable due to limited communication resources and/or privacy concerns,
and decentralized optimization serves as an important tool to solve such large-scale distributed learning problems due to its scalability, sparse communication, and better protection for data privacy~\cite{nedic2018distributed}.

Many methods have been proposed to solve the problem \eqref{eq: object} under various settings on the optimization objectives, network topologies, and communication protocols.
The paper~\cite{nedic2009distributed} developed a decentralized subgradient descent method (DGD) with diminishing stepsizes to reach the optimum for convex objective functions over an undirected network topology.
Subsequently, decentralized optimization methods for undirected networks, or more generally, with doubly stochastic mixing matrices, have been extensively studied in the literature; see, e.g.,~\cite{li2019decentralized,qu2017harnessing,scaman2018optimal,shi2015extra,uribe2017optimal,xu2015augmented}.
Among these works, EXTRA~\cite{shi2015extra} was the first method that achieves linear convergence  for strongly convex and smooth objective functions under symmetric stochastic matrices.
For directed networks, however, constructing a doubly stochastic mixing matrix usually requires a weight-balancing step, which could be costly when carried out in a distributed manner.
Therefore, the push-sum technique~\cite{kempe2003gossip} was utilized to overcome this issue.
Specifically, the push-sum based subgradient method in~\cite{nedic2014distributed} can be implemented over time-varying directed graphs, and linear convergence rates were achieved in~\cite{xi2017dextra, zeng2015extrapush} for minimizing strongly convex and smooth objective functions by applying the push-sum technique to EXTRA.

Gradient tracking is an important technique that has been successfully applied in many decentralized optimization algorithms.
Specifically, the methods proposed in~\cite{qu2017harnessing,nedic2017achieving,tian2018asy,xi2017add} employ gradient tracking to achieve linear convergence for strongly convex and smooth objective functions, where the work in \cite{nedic2017achieving,xi2017add,tian2018asy} particularly considered combining gradient tracking with the push-sum technique to accommodate directed graphs.
The methods can also be applied to time-varying graphs~\cite{nedic2017achieving} and asynchronous settings~\cite{tian2018asy}.
The Push-Pull/$\calA\calB$ method introduced in~\cite{pu2020push, xin2018linear} modified the gradient tracking methods to deal with directed network topologies without the push-sum technique.
The algorithm uses a row stochastic matrix to mix the local decision variables and a column stochastic matrix to mix the auxiliary variables that track the average gradients over the network. It can unify different network architectures, including peer-to-peer, master-slave, and leader-follower architectures \cite{pu2020push}.
For minimizing strongly convex and smooth objectives, the Push-Pull/$\calA\calB$ method not only enjoys linear convergence over fixed graphs~\cite{pu2020push, xin2018linear}, but also works well under time-varying graphs and asynchronous settings~\cite{pu2020push, saadatniaki2020decentralized, zhangfully}.

In decentralized optimization, efficient communication is critical for enhancing algorithm performance and system scalability. One major approach to reduce communication costs is considering communication compression, which is essential especially under limited communication bandwidth.
Recently, several compression methods have been proposed for distributed and federated learning, including~\cite{alis, bernstein2018signsgd,beznosikov2020biased, karimireddy2019error, liu2020distributed, liu2020double,  mishchenko2019distributed, seide20141, stich2020communication, stich2018sparsified, tang2019doublesqueeze,xu2020compressed,lin2017deep}.
{Recent works have tried to combine the communication compression methods with decentralized optimization. 
The existence of compression errors may result in inferior convergence performance compared to uncompressed or centralized algorithms. For example, the methods considered by~\cite{xiao2005scheme,carli2007average,nedic2008distributed,aysal2008distributed,carli2010gossip,yuan2012distributed} can only guarantee to reach a neighborhood of the desired solutions when the compression errors exist. 
QDGD~\cite{reisizadeh2019exact} achieves a vanishing mean solution error with a slower rate than the centralized method.   
To reduce the error from compression, some works~\cite{carli2010quantized,doan2018accelerating,berahas2019nested} increase compression accuracy as the iteration grows to guarantee the convergence. However, they still need high communication costs to get highly accurate solutions. Techniques to remedy this increased communication costs include gradient difference compression~\cite{mishchenko2019distributed,li2020acceleration,liu2020linear} and error compensation~\cite{stich2018sparsified,koloskova2019decentralized,koloskova2019decentralizeda}, which enjoy better performance than direct compression. 
In~\cite{tang2018communication}, the difference compression (DCD-PSGD) and extrapolation compression (ECD-PSGD) algorithms were proposed to reach the same convergence rate of the centralized schemes with additional requirements on the compression ratio.  In~\cite{koloskova2019decentralizeda}, a quantized decentralized SGD (CHOCO-SGD) method was proposed 
and shown to converge to the optimal solution or a stationary point at a comparable rate as the centralized SGD method.   
The works~\cite{koloskova2019decentralized} and~\cite{tang2019deepsqueeze} 
also achieve comparable convergence rates with the centralized scheme for solving nonconvex problems.}


For strongly convex and smooth objective functions, \cite{kajiyama2020linear} first considered a linearly convergent gradient tracking method based on a specific quantizer.
More recently, the paper \cite{liu2020linear} introduced LEAD that works with a general class of compression operators and still enjoys linear convergence. Some recent developments can be found in \cite{li2021compressed,xiong2021quantized,zhang2021innovation}, where \cite{xiong2021quantized} particularly combined Push-Pull/$\calA\calB$ with a special quantizer to achieve linear convergence over directed graphs.

In this paper, we consider decentralized optimization over general directed networks and propose a novel Compressed Push-Pull method (CPP) that combines Push-Pull/$\calA\calB$ with a general class of unbiased compression operators. CPP enjoys large flexibility in both the compression method and the network topology. We show CPP achieves linear convergence rate under strongly convex and smooth objective functions.

Broadcast or gossip-based communication protocols are popular choices for distributed computation due to their low communication costs \cite{aysal2009broadcast,boyd2006randomized,pu2020distributed}.
In the second part of this paper, we propose a broadcast-like CPP algorithm (B-CPP) that allows for asynchronous updates of the agents: at every iteration of the algorithm, only a subset of the agents wake up to perform prescribed updates. Thus, B-CPP is more flexible, and due to its broadcast nature, it can further save communication over CPP in certain scenarios \cite{pu2020distributed}. We show that B-CPP also achieves linear convergence for minimizing  strongly convex and smooth objectives.

The main contributions of this paper are summarized as follows.
\begin{itemize}
    \item We propose CPP -- a novel decentralized optimization method with communication compression. The method works under a general class of compression operators and is shown to achieve linear convergence for strongly convex and smooth objective functions over general directed graphs. To the best of our knowledge, CPP is the first method that enjoys linear convergence under such a general setting.

\item
We consider an asynchronous broadcast version of CPP (B-CPP). B-CPP further reduces the communicated data per iteration and is also provably linearly convergent over directed graphs for minimizing strongly convex and smooth objective functions. Numerical experiments demonstrate the advantages of B-CPP in saving communication costs.

\end{itemize}

The rest of this paper is organized as follows. We provide necessary notation and assumptions in Section \ref{sec:notation}. CPP is introduced and analyzed in Section \ref{sec:CPP}. In Section \ref{sec:BCPP}, we consider the algorithm B-CPP. Numerical examples are presented in Section \ref{sec:numerical}, and we conclude the paper in Section \ref{sec:conclusions}. 

\section{Notation and Assumptions}
\label{sec:notation}
%
Denote the set of agents as $\calN = \dr{1, 2, \cdots, n}$. At iteration $k$, each agent $i$ has a local estimation $\xx_i^k \in \Real^p$ of the global decision variable and an auxiliary variable $\yy_i^k$. We use lowercase bold letters to denote the local variables, and their counterpart uppercase bold letters denote the concatenation of these local variables. For instance, $\XX^k,~\na\FF(\XX^k)$ are the concatenation of $\dr{\xx_i^k}_{i\in \calN},~\dr{\na f_i(\xx_i^k)}_{i\in \calN}$, respectively, and their connections are
\begin{align*}
  &\XX^k = \big[\xx_1^k, \cdots, \xx_n^k\big]\tp \in \MatSize{n}{p},\\
  &\na\FF(\XX^k) = \big[\na f_1(\xx_1^k), \cdots, \na f_n(\xx_n^k)\big]\tp \in \MatSize{n}{p}.
\end{align*}

\begin{assumption}\label{assp: require each fi}
    Each $f_i$ is $\mu$-strongly convex ($\mu>0$) and $L$-smooth, i.e., for any $\xx_1, \xx_2\in \Real^p$,
    \seq{
      &\jr{\xx_1 - \xx_2, \na f_i(\xx_1) - \na f_i(\xx_2)} \geq \mu \nt{\xx_1 - \xx_2}^2, \\
      &\nt{\na f_i(\xx_1) - \na f_i(\xx_2)} \leq L \nt{\xx_1 - \xx_2}.
    }
\end{assumption}

%

Since all $f_i(\xx)$ are strongly convex, $f(\xx)$ admits a unique minimizer $\xx^*$. Denote $\XX^* = \one\xx^{*\top}, $ where $\one$ is the all-ones column vector.

Given any nonnegative matrix $\MM$, we denote by $\calG_{\MM}$ the induced graph by $\MM$. The sets $\calN_{\MM, i}^- \defeq \dr{j\in\calN: \MM_{ij} > 0}$ and $\calN_{\MM, i}^+ \defeq \dr{j\in\calN: \MM_{ji}>0}$ are called the in-neighbors and out-neighbors of agent $i$.

The communication between all the agents is modeled by directed graphs. Given a strongly connected graph $\calG = \pr{\calN, \calE}$ with $\calE \subset \calN \times \calN$ being the edge set, agent $i$ can receive information from agent $j$ if and only if $(i,j)\in \calE$. There are two $n$-by-$n$ nonnegative matrices $\RR$ and $\CC$.
A spanning tree $\calT$ rooted at some $i\in \calN$ in $\calG_{\RR}$ is a subgraph of $\calG_{\RR}$ containing $n-1$ edges, and each node except $i$ can be connected to $i$ by a path in $\calT$.
Let $\calR_{\RR}, \calR_{\CC\tp}$ denote the roots of the spanning trees in $\calG_{\RR}$ and $\calG_{\CC\tp}$.  
We have the following assumption on $\RR$ and $\CC$.
\newcommand\altil{\overline{\alpha}}
\begin{assumption}\label{asp: R and C}
    The matrix $\RR$ is supported by $\calG$, i.e., $\calE_{\RR} = \dr{(i,j)\in\calN\times\calN\Big| \RR_{ij} > 0 } \subset \calE$, and $\RR$ is a row stochastic matrix, i.e., $\RR\one = \one$. The matrix $\CC$ is also supported by $\calG$, and $\CC$ is a column stochastic matrix, i.e., $\one\tp\CC = \one\tp$.
    In addition, $\calR_{\RR} \cap \calR_{\CC\tp} \neq \emptyset$.
\end{assumption}
By \cite[Lemma~1]{pu2020push}, Assumption~\ref{asp: R and C} implies the following facts: {$\RR$ has a unique left eigenvector $\vecr$ with respect to $1$ such that $\vecr^\top\one=n$.
$\CC$ has a unique right   eigenvector $\vecc$ with respect to $1$ such that $\vecc\tp\one=n$. }
In addition, the entries of $\vecr$ and $\vecc$ are all nonnegative. All nonzero entries of $\vecr$, $\vecc$ correspond to $\calR_{\RR}$, $\calR_{\CC\tp}$, respectively.
Because $\calR_{\RR} \cap \calR_{\CC\tp} \neq \emptyset$, we have $\vecr\tp\vecc > 0$.

We denote the spectral radius of matrix $\AA$ as $\rho\pr{\AA}$. The inner product of two matrices is defined as $\jr{\XX, \YY} = \trace{\XX\tp\YY}$, and the Frobenius norm is $\nf{\XX}_{\rm F} = \sqrt{\jr{\XX, \XX}}$.
Given a vector $\dd$, $\dd_{a:b}$ is the subvector of $\dd$ containing the entries indexed from $a$ to $b$.
Given a matrix $\AA$, the notion $\AA_{a:b,c:d}$ denotes the submatrix containing the entries with row index in $[a, b]$ and column index in $[c, d]$.
We abbreviate ``$1:n$" by ``$:$" and ``$i:i$" is abbreviated as ``$i$".
Especially in our notations, $\AA_{i,:}$ and $\AA_{:, j}$ denote the $i$-th row and $j$-th column of $\AA$, respectively.
{For vector $\vv\in \Real^m$, $\Diag{\vv}$ is an $m$-by-$m$ diagonal matrix with $\Diag{\vv}_{ii} = \vv_i$. }

\begin{definition}\label{def: induced matrix norm n by p}
Given a vector norm $\nm{*}{\cdot}$, we define the corresponding matrix norm on an $n\times p$ matrix $\AA$ as
\seq{
\mm{*}{\AA} = \nt{\Big[\nm{*}{\AA_{:, 1}}, \nm{*}{\AA_{:, 2}}, \cdots, \nm{*}{\AA_{:, p}}\Big]}.
}
When $\nm{*}{\cdot}=\|{\cdot}\|_2$, we have $\mm{2}{\AA}=\|{\AA}\|_{\rm F}$, the Frobenius norm .
\end{definition}

\begin{definition}\label{def: induced matrix norm n by n}
    Given a vector norm $\nm{*}{\cdot}$, we define the corresponding induced norm on an $n\times n$ matrix $\AA$ as
    $
      \nm{*}{\AA} = \sup_{\xx \neq 0} \frac{\nm{*}{\AA\xx}}{\nm{*}{\xx}}.
    $
\end{definition}

\begin{lemma}[Lemma 5 in \cite{pu2020push}]\label{eq: basic calculus of mm}
For any matrices $\AA\in\MatSize{n}{p}$, $\WW\in\MatSize{n}{n}$, and a vector norm $\nm{*}{\cdot}$, we have $\mm{*}{\WW\AA} \leq \nm{*}{\WW} \mm{*}{\AA}$.
For any vectors $\aa\in \Real^n$, $\bb\in \Real^p$, $\mm{*}{\aa\bb\tp} \leq \nm{*}{\aa} \nt{\bb}$.
\end{lemma}

\def\cerr#1{C_{\rm #1}}
The compression in this paper is denoted by $\compress\pr{\cdot}$. For any matrix $\XX\in \MatSize{n}{p}$, we denote $\compress(\XX)$ as an $n$-by-$p$ matrix with the $i$-th row being $\compress\pr{\XX_{i,:}}$.  
\begin{assumption}\label{assp: compress}
The compression is unbiased, i.e.,
    given $\xx\in\mathbb{R}^{p}$ and $\xxhat = \compress\pr{\xx} $, there exists $C_2 > 0$ such that
    $
      \mathbb{E}\left[\xxhat\big|\xx\right] = \xx,\ \text{and \ }      \expec{\nt{\xxhat-\xx}^2\big|\xx} \leq \cerr{2} \nt{\xx}^2.
    $
    And the random variables generated inside the procedure $\compress\pr{\xx}$ depends on $\xx$ only.
\end{assumption}

\section{A Push-Pull Method with Compression}
\label{sec:CPP}

In this section, we propose a Push-Pull method with Compression (CPP) in Algorithm~\ref{alg: push pull with compression}.
\begin{algorithm}
\caption{  Compressed Push-Pull \ (in agent $i$ 's perspective) }
\label{alg: push pull with compression}
\KwIn{initial position $\xx_i^0$,
     stepsize $\alpha_i$,
     averaging parameters $\beta, \gamma, \eta \in (0, 1] $,
     network information $\RR_{ij} (j\in \calN_{{\rm R}, i}^-)$, $\CC_{ij} (j\in \calN_{{\rm C}, i}^-)$ and the sets $\calN_{\rmR, i}^+$, $\calN_{{\rm C}, i}^+$,
    total iteration number $K$
}
\KwOut{$\xx_i^K $}

Initiate $\yy_i^0 = \na f_i(\xx_i^0), \uu_i^0 = \uu_{\rmR, i}^0  = \zero$. \;
\For{$k=0$ to $K - 1$}{
    Call procedure $\PULL$: $\pr{\xxhat_{{\rm R}, i}^k, \uu_i^{k+1}, \uu_{{\rm R}, i}^{k+1}} = \PULL\pr{\xx_i^k, \uu_i^k, \uu_{{\rm R}, i}^k}$. \;
    Update
      $\xx_i^{k+1} = (1 - \beta)\xx_i^k + \beta \xxhat_{{\rm R}, i}^k - \alpha_i \yy_i^k. $
    \;

    Call procedure $\PUSH$: $\pr{\yyhat_i^k, \yyhat_{{\rm C}, i}^k} = \PUSH\pr{\yy_i^k}$. \;
    Compute local gradient $\na f_i\pr{\xx_i^{k+1}} $ . \;
    Update $\yy_i^{k+1} = \yy_i^k + \gamma\yyhat_{\rmC, i}^k - \gamma\yyhat_i^k + {\na f_i\pr{\xx_i^{k+1}} - \na f_i\pr{\xx_i^k}}.   $
}
\vspace{0.2cm}
\Subroutine $\PULL\pr{\xx_i, \uu_i, \uu_{{\rm R}, i}}$ \;
    \tabb $ \pp_i = \compress\pr{\xx_i - \uu_i}.  $ 
    \;
    \tabb $\pp_{{\rm R}, i} = \sum_{j\in \calN_{{\rm \RR}, i}^-} \RR_{ij} \pp_j $ .  
    \;
    \tabb $\xxhat_i = \uu_i + \pp_i  $ .  \;
    \tabb $\xxhat_{\rmR, i} = \uu_{\rmR, i} + \pp_{\rmR, i}  $.  \;
    \tabb $\uu_i \arrlf \pr{1 - \eta}\uu_i + \eta \xxhat_i$ .  \;
    \tabb $\uu_{{\rm R}, i} \arrlf \pr{1-\eta}\uu_{{\rm R}, i} + \eta \xxhat_{{\rm R}, i}$ . \;
    \tabb \textbf{Return:} $\xxhat_{{\rm R}, i}, \uu_i, \uu_{{\rm R}, i}$.  \;
\EndSubroutine \;
\vspace{0.2cm}
\Subroutine $\PUSH\pr{\yy_i }$ \;
    \tabb  $ \yyhat_i = \compress\pr{\yy_i }. $ 
    \;
    \tabb $\yyhat_{{\rm C}, i} = \sum_{j\in \calN_{{\rm \CC}, i}^- } \CC_{ij} \yyhat_j $ .  
    \;
    \tabb \textbf{Return:} $\yyhat_i, \yyhat_{{\rm C}, i} $. \;
\EndSubroutine
\end{algorithm}
{We start from discussing the following scheme of Push-Pull/$\calA\calB$ from the viewpoint of agent $i$~\cite{pu2020push, xin2018linear}:
\begin{equation*}
    \left\{
  \begin{aligned}
      & \xx_i^{k+1} = \sum_{j\in \calN_{\rm R,i}^-}\RR_{ij}\xx_j^k - \alpha_i \yy_i^k \\
      & \yy_i^{k+1} = \sum_{j\in \calN_{\rm C,i}^-}\CC_{ij}\yy_j^k + \na f_i\pr{\xx_i^{k+1}} - \na f_i\pr{\xx_i^k}
  \end{aligned}
  \right.
\end{equation*}
and  $\yy_i^0 = \na f_i\pr{\xx_i^0}$.
At each iteration, agent $i$ computes $\sum_{j\in \calN_{\rm R, i}^-}\RR_{ij}\xx_j^k$ to average the local decision variables received from its neighbors.
As each agent takes such a step, the local decision variables tend to get closer with each other and eventually reach consensus when $\yy_i^k$ goes to zero.
In the next ``gradient tracking" step,
by adding the gradient difference term $\na f_i\pr{\xx_i^{k+1}} - \na f_i\pr{\xx_i^k} $ into $\yy_i^{k+1}$ and considering that $\yy_i^0 = \na f_i\pr{\xx_i^0}$ and $\one\tp\CC = \one\tp$,
we have  $\sum_{i\in \calN}\yy_i^k = \sum_{i\in\calN}\na f_i\pr{\xx_i^k}$
by induction, which indicates that $\dr{\yy_i^k}_{i\in \calN}$ can help track the average gradients over the network.
}

{The Compressed Push-Pull (CPP) method differs from Push-Pull/$\calA\calB$ mainly in the averaging step which requires communication.
To alleviate the impact of compression errors, we use a ``damped" averaging step and replace the exact local variables by their communication-efficient version, i.e., $\pr{1 - \beta}\xx_i^k + \beta\sum_{j\in \calN_{\rm R, i}^-}\RR_{ij}\xxhat_j^k$.
For $\yy_i^k$, we take a slightly different averaging step  so that the relation $\sum_{i\in \calN}\yy_i^k = \sum_{i\in\calN}\na f_i\pr{\xx_i^k}$ is preserved and the compression errors can also be bounded ``safely".}

{However, sending $\xxhat_i^k$ itself can still be expensive.
Therefore, we call the procedure $\PULL$ to save the communication costs.
In this procedure, $(\uu_i,\uu_{{\rm R},i})$ is used to track $(\xx_i, \sum_{j\in \calN_{\rm R, i}^-} \RR_{ij}\xx_j )$.
The compression and the communication are applied on the difference $(\xx_i-\uu_i)$ and its compressed version, respectively.
It can be derived from the relation $\pp_{{\rm R}, i} = \sum_{j\in \calN_{{\rm \RR}, i}^-} \RR_{ij} \pp_j $ and induction that
in each call to $\PULL$,
$
 \uu_{\rm R, i} = \sum_{j\in \calN_{\rm R, i}^-}\RR_{ij}\uu_j.
$
And the compression error could be very small if the difference $\xx_i-\uu_i$ is small. 
With this observation, we have
$
    \xxhat_{\rm R, i} = \sum_{j\in \calN_{\rm R, i}^-}\RR_{ij}\xxhat_j
$
with $\xxhat_j$ being an unbiased approximation of $\xx_j$, whose variance converges to 0. }

{In the $\PUSH$ procedure, since the variable $\yy_i^k$ converges to zero as we will show later, we can simply compress it and estimate $\sum_{j\in \calN_{\rm C, i}^-}\CC_{ij}\yy_j^k$ using the compressed values.  Similarly, we have that in each call to $\PUSH$, $\widehat{\yy}_{\rmC, i} = \sum_{j\in \calN_{\rm C, i}^-}\CC_{ij}\widehat{\yy}_j$, with $\widehat{\yy}_j$ being the unbiased compression of $\yy_j$, whose variance converges to 0 as the input $\yy^k_i$ converges to $\zero$.
}

{Let
$\almax = \max\limits_{1\leq i\leq n} \alpha_i,
$
and define
$
  \aalpha
$ as a diagonal matrix with $\aalpha_{ii} = \alpha_i$.
}
Then, we can rewrite Algorithm~\ref{alg: push pull with compression} into the following matrix form
\begin{equation}\label{eq: alg push pull with compress origin matrix form}
 \left\{\small
  \begin{aligned}
    &\XXhat^k = \UU^k + \compress\pr{\XX^k - \UU^k},  \\
    &\YYhat^k = \compress\pr{ \YY^k  },   \\
    &\XX^{k+1} = \pr{1 - \beta}\XX^k + \beta\RR\XXhat^k - \aalpha\YY^k,   \\
    &\YY^{k+1} = \YY^k + \gamma\pr{\CC - \II}\YYhat^k  \\ & \qquad\quad\quad + \nabla\FF(\XX^{k+1}) - \nabla\FF(\XX^k),  \\
    &\UU^{k+1} = \pr{1 - \eta}\UU^k + \eta\XXhat^k .
  \end{aligned}
  \right.
\end{equation}

Comparing to the standard Push-Pull method in~\cite{pu2020push}, the procedures $\PUSH$ and $\PULL$ save communication costs at the cost of error in the mixing. Note $\XXhat^k$ and $\YYhat^k$ are approximations for $\XX^k$ and $\YY^k$, respectively. The difference between $\XX^k$ (or $\YY^k$) and $\XXhat^k$ (or $\YYhat^k$) are induced by the compression errors. Let us denote the approximation error as $\eX_i^k = \xx_i^k - \xxhat_i^k$ and $\ee_i^k = \yy_i^k - \yyhat_i^k$. We further write them into the following matrix form
\seql{\label{eq: error in matrix form}}{
  \EX^k = \XX^k - \XXhat^k,\ \EE^k = \YY^k - \YYhat^k.
}
From the $\PULL$ procedure, we can see that $\EX^k$ is also the compression error for $\XX^k-\UU^k$. The update of $\UU^{k+1} = \pr{1 - \eta}\UU^k + \eta\XXhat^k$ indicates that $\UU^k$ is tracking the motions of $\XX^k$. As $\UU^k$ approaches $\XX^k$, by Assumption~\ref{assp: compress}, the variance of the approximation error $\EX^k$ will also tends to $\zero$.

Since $\YY^0 = \na \FF\pr{\XX^0}$ and $\one\tp\CC = \one\tp$, it follows from (\ref{eq: Y update k+1}) and induction that
\seql{\label{eq: YY track gradient}}{\one\tp\YY^k = \one\tp\na\FF\pr{\XX^k}.}
{Then, as the local variables  become closer to each other,
$\yy_i^k$ are more and more parallel to $\one\tp\gF{k} $.
As $\XX^k$ approaches the optimal point, $\one\tp\gF{k}$ tends to $\one\tp\na \FF\pr{\XX^*} = \zero\tp$.} This indicates that $\YY^k$ also tends to $\zero$. Then, by Assumption~\ref{assp: compress}, the variance of compression error $\EY^k$ will also tend to $\zero$. As the compression errors $\EX^k$, $\EY^k$ tend to $\zero$, we will have $\XXhat^k \approx \XX^k$ and $\YYhat^k \approx \YY^k$. We remark that if $\XXhat^k = \XX^k$, $\YYhat^k = \YY^k$, Algorithm~\ref{alg: push pull with compression} will reduce to the PUSH-PULL method in~\cite{pu2020push}.

To show the convergence of the proposed compressed algorithm, we define
$
     \Rb = (1 - \beta)\II + \beta \RR,\
     \Cg = (1 - \gamma)\II + \gamma \CC,
$
and rewrite (\ref{eq: alg push pull with compress origin matrix form}) as
\begin{subequations}\label{eq: push pull with compress iteration}
  \small
    \begin{numcases}{}
     \EX^k = \XX^k - \UU^k - \compress\pr{\XX^k - \UU^k},  \\
     \EY^k  = \YY^k - \compress\pr{\YY^k},  \\
     \XX^{k+1} = \Rb \XX^k - \aalpha \YY^k - \beta \RR \EX^k , \label{eq: X update k+1}   \\
     \YY^{k+1} = \Cg \YY^k + \na \FF(\XX^{k+1}) - \na \FF(\XX^k) \notag  \\
     \quad\quad\quad\quad + \gamma\pr{\II - \CC} \EE^k , \label{eq: Y update k+1} \\
     \UU^{k+1} = (1 - \eta)\UU^k + \eta \XX^k - \eta \EX^k, \label{eq: U update k+1}
    \end{numcases}
\end{subequations}
We define the value $\altil = {\vecr\tp\aalpha\vecc}/{n}$.
Under Assumption~\ref{asp: R and C}, $\altil > 0$ can be satisfied by setting $\alp_i > 0$ for at least one $i\in \calR_{\RR}\cap\calR_{\CC\tp}$.
When $\altil > 0$,
let us define a constant $w \geq \almax/\altil \geq \frac{n}{\vecr\tp\vecc}$.
Note that the analysis in the rest of this section holds true for arbitrary $w \geq \almax/\altil$.
As we will choose $w \geq \almax/\altil$ as an auxiliary parameter to help choose proper stepsizes $\dr{\alp_i}_{i\in\calN}$ in Theorem~\ref{thm:linconv} which is the main theorem of this section,
we use the same $w$ in the analysis below.

Let $\calF_k$ denote the $\sigma$-field generated by $\dr{\EE^j, \EX^j}_{j=0}^{k-1}$, and $\calF^+_k$ is the $\sigma$-field generated by $\dr{\EE^j, \EX^j}_{j=0}^{k-1}\cup\dr{\EX^k}$.
Thus, we have $\calF_1 \subset \calF^+_1 \subset \calF_2 \subset \calF^+_2 \subset \cdots \subset \calF_k \subset \calF^+_k \subset \cdots $.
By Lemma~\ref{lem: condi expec} below, $\calF_k = \sigma\pr{\XX^i, \YY^i, \UU^i: 0\leq i < k}$ and $\calF^+_k = \sigma\pr{\XX^i, \UU^i, \YY^j : 0\leq i\leq k, 0\leq j < k}.$

The following lemma comes from (\ref{eq: X update k+1})-(\ref{eq: U update k+1}) and Assumption~\ref{assp: compress}.
\begin{lemma}\label{lem: condi expec}
The variables $\XX^k, \YY^k, \UU^k, \VV^k$ are measurable with respect to $\calF_k$, and $\XX^k$ is measurable with respect to $\calF^+_{k-1}$.
Moreover, we have
\seql{\label{eq: EX EE condi expec zero}}{
\expec{\EX^k | \calF_k} = \expec{ \EE^k | \calF^+_k } = \zero .}
\end{lemma}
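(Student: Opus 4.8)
The plan is to establish the two measurability assertions by a single induction on $k$ that marches through the recursion \eqref{eq: push pull with compress iteration} one line at a time, and then to deduce the two conditional-expectation identities from the unbiasedness in Assumption~\ref{assp: compress} once the relevant compression inputs are known to be measurable. Throughout I would lean on the inclusions $\calF_k \subset \calF^+_k \subset \calF_{k+1}$, which are immediate from the definitions (note $\calF_{k+1}$ is generated by $\calF^+_k$ together with the extra generator $\EE^k$).

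For the base case I would observe that $\XX^0$ is the prescribed initialization, $\YY^0 = \na\FF(\XX^0)$, and $\UU^0 = \VV^0 = \zero$, so all of them are deterministic and hence $\calF_0$-measurable. For the inductive step, assuming $\XX^k, \YY^k, \UU^k, \VV^k$ are $\calF_k$-measurable, I would process the lines of \eqref{eq: push pull with compress iteration} in order. First, $\EX^k$ is a function of the $\calF_k$-measurable input $\XX^k - \UU^k$ together with the fresh randomness internal to $\compress$; since $\EX^k$ is one of the generators of $\calF^+_k$, it is $\calF^+_k$-measurable by definition. Consequently $\XX^{k+1}$ and $\UU^{k+1}$ (and the analogous auxiliary update), being affine in $\XX^k,\YY^k,\UU^k$ and $\EX^k$ by \eqref{eq: X update k+1} and \eqref{eq: U update k+1}, are $\calF^+_k$-measurable; in particular $\XX^{k+1}$ is $\calF^+_k$-measurable, which is the second assertion advanced by one index. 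Next, $\EE^k = \YY^k - \compress(\YY^k)$ is a function of the measurable input $\YY^k$ and fresh randomness, and is a generator of $\calF_{k+1}$, hence $\calF_{k+1}$-measurable. Finally \eqref{eq: Y update k+1} expresses $\YY^{k+1}$ as a deterministic function of $\YY^k, \XX^{k+1}, \XX^k$ and $\EE^k$, all $\calF_{k+1}$-measurable, so $\YY^{k+1}$ is $\calF_{k+1}$-measurable; combined with $\calF^+_k \subset \calF_{k+1}$ this upgrades the measurability of the remaining variables to $\calF_{k+1}$ and closes the induction.

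With measurability established, I would obtain the conditional expectations directly from Assumption~\ref{assp: compress}. The decisive point is that $\XX^k - \UU^k$ is $\calF_k$-measurable, while the randomness drawn inside $\compress(\XX^k - \UU^k)$ is, by the final clause of Assumption~\ref{assp: compress}, fresh randomness depending on the input alone and therefore independent of $\calF_k$. This lets me lift the pointwise (and row-wise) identity $\expec{\compress(\xx)\,|\,\xx} = \xx$ to $\expec{\compress(\XX^k - \UU^k)\,|\,\calF_k} = \XX^k - \UU^k$, whence $\expec{\EX^k\,|\,\calF_k} = \zero$. Running the same argument with input $\YY^k$, which is $\calF_k \subset \calF^+_k$-measurable and whose compression randomness is independent of $\calF^+_k$, gives $\expec{\compress(\YY^k)\,|\,\calF^+_k} = \YY^k$ and hence $\expec{\EE^k\,|\,\calF^+_k} = \zero$.

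I expect the main obstacle to be precisely this last lifting: Assumption~\ref{assp: compress} is phrased pointwise in the input $\xx$, and turning it into a statement conditioned on $\calF_k$ and $\calF^+_k$ requires both that the compression input be measurable with respect to the conditioning field (supplied by the induction) and that the fresh randomness of each $\compress$ call be independent of that field. The latter must be read out of the clause ``the random variables generated inside the procedure $\compress(\xx)$ depends on $\xx$ only'': I would make explicit that every compression call draws its own randomness, independent---given its input---of all previously generated randomness, and that within one iteration the \PUSH\ compression is independent of the \PULL\ compression. Once this independence structure is pinned down, both identities reduce to a routine use of the tower property.
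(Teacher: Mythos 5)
Your proposal is correct and follows essentially the same route as the paper's proof: measurability is obtained by marching through the recursion \eqref{eq: push pull with compress iteration} (the paper phrases this as ``expanding recursively'' rather than an explicit induction, which is the same argument), and the identities \eqref{eq: EX EE condi expec zero} are then read off from Assumption~\ref{assp: compress}. Your write-up is in fact somewhat more careful than the paper's on two minor points---you correctly treat $\YY^{k+1}$ as a measurable (not linear, because of the gradient terms) function of earlier variables, and you make explicit the independence of the fresh compression randomness from the conditioning $\sigma$-field needed to lift the pointwise unbiasedness to a conditional expectation---but these are refinements of the same argument, not a different approach.
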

\begin{proof}
    By expanding (\ref{eq: X update k+1})-(\ref{eq: U update k+1}) recursively, $\XX^k$, $\YY^k$, $\UU^k$, and $\VV^k$ can be represented by linear combinations of $\XX^0, \YY^0, \UU^0, \VV^0$ and random variables $\dr{\EE^j, \EX^j}_{j=0}^{k-1}$, i.e., $\XX^k, \YY^k, \UU^k, \VV^k$ are measurable with respect to $\calF_k$.
    By (\ref{eq: X update k+1}), $\XX^k$ is a linear combination of $\EX^{k-1}$ and $\calF_{k-1}$-measurable variables $\XX^{k-1}, \YY^{k-1}$, thus, $\XX^k$ is measurable with respect to $\calF^+_{k-1}$.
    And we obtain (\ref{eq: EX EE condi expec zero}) directly from Assumption~\ref{assp: compress}.
\end{proof}

\subsection{Convergence Analysis for CPP}
In this section, we analyze the convergence rates of Algorithm~\ref{alg: push pull with compression}.
To begin with, we define the averages of $\XX^k$, $\YY^k$ as follows,
\seql{\label{eq: consensus variable def}}{
  \xa^k = \frac{1}{n}\vecr\tp\XX^k,\ \ya^k = \frac{1}{n}\one\tp\YY^k.}
We define two matrices
$
    \PR = \II - \frac{\one\vecr\tp}{n},\ \PC = \II - \frac{\vecc\one\tp}{n}.
$
It follows from direct calculation that
$
    \PR\RR = \RR\PR = \RR - \frac{\one\vecr\tp}{n},
$
$
    \PC\CC = \CC\PC = \CC  - \frac{\vecc\one\tp}{n},
$
$
    \PR\XX^k = \XX^k - \one\xa_k,
$
$
    \PC\YY^k = \YY^k - \vecc\ya_k,
$
$
    \pr{\RR - \II}\PR = \RR - \II,
$
and
$
    \PC\pr{\II - \CC} = \II - \CC.
$

In the analysis below,
$\expec{\nt{\xa^k - \xx^*}^2}$ is employed to measure the closeness to the optimal point; $\expec{\mR{\PR\XX^k}^2}$ and $\expec{\mC{\PC\YY^k}^2}$ measure the consensus error and the gradient tracking error, respectively.
{
To bound the compression errors, we use
$\expec{\mt{\UU^k - \XX^k}^2}$ to measure the convergence of the momentums.
}
The matrix norms $\mR{\cdot}$ and $\mC{\cdot}$ are defined in Lemma~\ref{def nR nC}.

{Compared with the convergence analysis for Push-Pull, the analysis for CPP additionally requires dealing with the compression errors and establishing the relationship between the error terms of different types. Moreover, another term $\expec{\mt{\YY^k}^2}$ is considered to simplify the proof, and its role will be made clear in the follow-up analysis.}

Now, we have the following expansion by multiplying $\frac{\vecr\tp}{n}$ on both sides of (\ref{eq: X update k+1})
\seql{\label{eq: xa expansion}}{
     &\xa^{k+1} = \frac{1}{n}\vecr\tp\pr{\Rb \XX^k - \aalpha \YY^k - \beta \RR \EX^k}  \\
       =& \xa^k - \frac{1}{n}\vecr\tp\aalpha\pr{\YY^k - \vecc\ya^k + \vecc\ya^k} - \frac{\beta}{n} \vecr\tp \EX^k \\
       =& \xa^k - \altil \ya^k - \frac{1}{n}\vecr\tp\aalpha\PC\YY^k - \frac{\beta}{n} \vecr\tp \EX^k \\
       =& \xa^k - \altil \gg^k + \altil\pr{\gg^k - \ya^k} - \frac{1}{n}\vecr\tp\aalpha\PC\YY^k \\
        & - \frac{\beta}{n} \vecr\tp \EX^k,
}
where
$
  \gg^k = \na f(\xa^k).
$

Multiplying $\PR$ on both sides of (\ref{eq: X update k+1}), we have
\seql{\label{eq: x - one xa expansion}}{
     &\PR\XX^{k+1} \\
       =& \PR\Rb \PR\XX^k - \PR\aalpha \YY^k
       - \beta \PR\RR\EX^k ,
}
Multiplying $\PC$ on both sides of (\ref{eq: Y update k+1}), we obtain
\seql{\label{eq: y - one ya expansion}}{
     \PC\YY^{k+1}
       =& \PC\Cg \PC\YY^k  + \gamma \pr{\II - \CC} \EE^k\\
       &+ \PC \pr{\na \FF\pr{\XX^{k+1}} - \na \FF\pr{\XX^k}}.
}

\newcommand \theR{\theta_{\rm R}}
\newcommand \theC{\theta_{\rm C}}
\newcommand \Rtil{\widetilde{\RR}}
\newcommand \Ctil{\widetilde{\CC}}
\newcommand\traRt{\delta_{{\rm R}, 2}}
\newcommand\tratR{\delta_{2, {\rm R}}}
\newcommand\traCt{\delta_{{\rm C}, 2}}
\newcommand\tratC{\delta_{2, {\rm C}}}
\newcommand\traRC{\delta_{{\rm R}, {\rm C}}}
\newcommand\traCR{\delta_{{\rm C}, {\rm R}}}

\begin{lemma}\label{def nR nC}
    There are invertible matrices $\Rtil, \Ctil \in \MatSize{n}{n}$ inducing vector norms $\nR{\xx} \defeq \nt{\Rtil\xx}$, $\nC{\xx} \defeq \nt{\Ctil\xx}$ for $\xx \in \Real^n$.
    The corresponding matrix norms $\nR{\cdot}, \nC{\cdot}$ defined by Definition~\ref{def: induced matrix norm n by n} satisfy:
    for any $\beta, \gamma\in [0, 1]$,
    \seql{\label{eq: nR Rb nC Cg}}{
      \nR{\PR\Rb} \leq 1 - \theR \beta,\
      \nC{\PC\Cg} \leq 1 - \theC \gamma,
    }
    where $\theR, \theC $ are constants in $(0, 1]$.
    Especially, $\nR{\PR} = \nC{\PC} = 1$ and $\nR{\RR} \leq 2$, $\nR{\RR - \II} \leq 3$, $\nC{\CC} \leq 2$, $\nC{\CC - \II} \leq 3$.
    Additionally, $\nt{\xx} \leq \nR{\xx}$, $\nt{\xx} \leq \nC{\xx}$, $\forall \xx\in \Real^n$;
    $\mt{\AA} \leq \mR{\AA}  $, $\mt{\AA} \leq \mC{\AA} $, $\forall \AA\in \MatSize{n}{p}$.
    There exist constants $\traRt $, $\traCt $ such that $\nR{\xx} \leq \traRt \nt{\xx}$, $\nC{\xx} \leq \traCt\nt{\xx}$, $\forall \xx\in \Real^n$.
\end{lemma}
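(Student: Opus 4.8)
The plan is to reduce every claim to the contractivity of the two deflated operators
$\PR\RR=\RR-\tfrac1n\one\vecr\tp$ and $\PC\CC=\CC-\tfrac1n\vecc\one\tp$, and then to exhibit norms that are simultaneously contractive for these operators and well-behaved on diagonal matrices. Since $1$ is a simple eigenvalue of $\RR$ with right and left eigenvectors $\one,\vecr$ (established above via \cite[Lemma~1]{pu2020push}), subtracting the rank-one spectral projection removes it, giving $\rho(\PR\RR)<1$ and, symmetrically, $\rho(\PC\CC)<1$. I would then take $\Rtil$ and $\Ctil$ to be \emph{diagonal}: $\Rtil=\Diag{\sqrt{\vecr}}$ and $\Ctil=\Diag{1/\sqrt{\vecc}}$, each rescaled by a constant $\geq 1$ so that the resulting weights are at least $1$ (this secures $\nt{\xx}\leq\nR{\xx}$ and $\nt{\xx}\leq\nC{\xx}$, hence also $\mt{\AA}\leq\mR{\AA}$, $\mt{\AA}\leq\mC{\AA}$). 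Thus $\nR{\cdot}$ and $\nC{\cdot}$ are the Euclidean norms weighted by $\vecr$ and by $1/\vecc$.

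Because $\Rtil$ is diagonal it commutes with every diagonal $\VV$, so $\nR{\VV}=\nt{\Rtil\VV\Rtil\inv}=\nt{\VV}$, which is the diagonal identity ``for free'' (likewise for $\nC{}$); the equivalence constants $\traRt,\traCt$ are just the largest weights. The weights are chosen precisely so that $\PR$ is the orthogonal projection onto $\{\xx:\vecr\tp\xx=0\}$ for the inner product $\jr{\xx,\yy}_{\rm R}=\xx\tp\Diag{\vecr}\yy$ (one checks $\Diag{\vecr}\PR=\PR\tp\Diag{\vecr}$), whence $\nR{\PR}=1$, and symmetrically $\nC{\PC}=1$. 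The contraction comes from a Jensen/Cauchy--Schwarz estimate: for row-stochastic $\RR$, $\nR{\RR\xx}^2\leq\sum_{i,j}\vecr_i\RR_{ij}\xx_j^2=\nR{\xx}^2$ using $\vecr\tp\RR=\vecr\tp$, with equality forcing $\xx$ constant on out-neighborhoods; restricting to the $\PR$-invariant subspace $\{\vecr\tp\xx=0\}$ and using primitivity yields $\nR{\PR\RR}\leq\sigma_{\rm R}<1$, and the dual computation (using $\sum_i\CC_{ij}=1$ and $\CC\vecc=\vecc$) gives $\nC{\PC\CC}\leq\sigma_{\rm C}<1$. Then $\PR\Rb=(1-\beta)\PR+\beta\,\PR\RR$ and the triangle inequality give $\nR{\PR\Rb}\leq(1-\beta)+\beta\sigma_{\rm R}=1-\theR\beta$ with $\theR=1-\sigma_{\rm R}\in(0,1]$, uniformly in $\beta\in[0,1]$; likewise for $\Cg$. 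The constants $2,3$ follow from $\RR=\PR\RR+(\II-\PR)$ (since $(\II-\PR)\RR=\II-\PR$) together with $\nR{\II-\PR}=1$, giving $\nR{\RR}\leq\sigma_{\rm R}+1<2$ and $\nR{\RR-\II}\leq\nR{\RR}+1<3$, and analogously for $\CC$; Lemma~\ref{eq: basic calculus of mm} then transfers these to $\mR{\cdot},\mC{\cdot}$.

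The hard part is that this clean diagonal construction presumes $\vecr,\vecc>0$, i.e. that $\calG_\RR$ and $\calG_{\CC\tp}$ are strongly connected. Under Assumption~\ref{asp: R and C} the supports of $\vecr,\vecc$ are only the root sets $\calR_\RR,\calR_{\CC\tp}$, so $\Diag{\sqrt{\vecr}}$ may be singular and $\Diag{1/\sqrt{\vecc}}$ may blow up on the transient coordinates. The tension is genuine: a weighted-Euclidean norm forces the diagonal identity, yet $\nR{\PR}=1$ is attained only when the weights are exactly proportional to $\vecr$, which is incompatible with positive-definiteness once $\vecr$ has zeros. I expect the real work to lie here---one must either assign small strictly positive weights to the transient coordinates and track the resulting controlled degradation of $\nR{\PR}$ and of $\theR$, or split $\Real^n$ into the root block (carrying the $\sqrt{\vecr}$, $1/\sqrt{\vecc}$ weights) and the transient block and build the norm blockwise, exploiting that transient mass is driven into the roots. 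By comparison, the uniform-in-$\beta$ bound and the elementary constants are immediate.
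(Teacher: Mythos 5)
Your construction does not prove the lemma under the paper's actual hypotheses, and the two places where it fails are exactly the places you flag but do not resolve. First, the diagonal choice $\Rtil=\Diag{\sqrt{\vecr}}$, $\Ctil=\Diag{1/\sqrt{\vecc}}$ requires $\vecr,\vecc$ to be entrywise positive, i.e.\ $\calG_{\RR}$ and $\calG_{\CC\tp}$ strongly connected. Assumption~\ref{asp: R and C} only guarantees spanning trees with $\calR_{\RR}\cap\calR_{\CC\tp}\neq\emptyset$, and the paper states explicitly that $\vecr,\vecc$ vanish off the root sets; so $\Diag{\sqrt{\vecr}}$ may be singular and $\Diag{1/\sqrt{\vecc}}$ undefined. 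The repairs you sketch are not routine bookkeeping: your two key identities, $\Diag{\vecr}\PR=\PR\tp\Diag{\vecr}$ (which gives $\nR{\PR}=1$) and the Jensen telescoping $\sum_i \ww_i\RR_{ij}=\ww_j$ (which gives non-expansiveness of $\RR$), hold \emph{only} when the norm weights equal $\vecr$ exactly, so any perturbation of the zero weights destroys both simultaneously, and controlling that loss is essentially the whole content of the lemma. Second, even in the strictly positive case your one-step contraction claim is wrong as stated: primitivity does not imply $\nR{\PR\RR}<1$. Take
\begin{equation*}
\RR=\begin{pmatrix} 0 & 1 & 0\\ 0 & 0 & 1\\ \tfrac12 & \tfrac12 & 0 \end{pmatrix},\qquad \vecr=\tfrac{3}{5}\pr{1,2,2}\tp,\qquad \xx=\Big(1,\,1,\,-\tfrac32\Big)\tp .
\end{equation*}
Here $\RR$ is primitive ($\RR^5>0$), its eigenvalues are $1$ and $(-1\pm i)/2$, so it satisfies every spectral hypothesis in play; yet $\vecr\tp\xx=0$, Jensen is an equality on every row at this $\xx$ (rows $1,2$ have singleton support, row $3$ only needs $\xx_1=\xx_2$), and $\vecr\tp\RR\xx=0$, so $\PR\RR\xx=\RR\xx$ and $\nR{\PR\RR\xx}=\nR{\xx}$, i.e.\ $\nR{\PR\RR}=1$. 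What your equality analysis actually needs is $\RR_{ii}>0$ for all $i$ (then equality forces constancy along edges of a strongly connected graph), an assumption you never invoke.

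The paper's proof avoids both obstructions by abandoning weighted Euclidean norms altogether: it takes the Jordan decomposition $\DD\inv\RR\DD=\JJ$ with the simple eigenvalue $1$ in the leading block, conjugates by $\VV_t=\Diag{1,t,\cdots,t^n}$ to shrink the superdiagonal below the spectral gap $\min_{k\geq2}(1-\abs{z_k})/2$, and sets $\Rtil$ proportional to $\VV_t\inv\DD\inv$. This uses only that $1$ is a simple eigenvalue and all others lie strictly inside the unit disk, which Assumption~\ref{asp: R and C} delivers even when $\vecr$ has zero entries; positivity of $\vecr$ plays no role. The trade-off cuts the other way on the final claim: since the paper's $\Rtil$ is not diagonal, the identity $\nR{\VV}=\nt{\Rtil\VV\Rtil\inv}=\nt{\VV}$ for diagonal $\VV$ is not automatic and is asserted rather than derived there, whereas your diagonal construction would yield it for free. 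So your instinct about that tension is sound, but as written your proposal establishes the lemma only under strictly stronger hypotheses (strong connectivity of both graphs plus positive diagonals) than the ones the lemma is required to cover.
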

\begin{proof}
    See Supplementary Material~\ref{lem: proof of def nR nC}.
\end{proof}
In the following, $\nR{\cdot}$, $\nC{\cdot}$, $\mR{\cdot}$, and  $\mC{\cdot}$ refer to the norms defined in the above lemma.

\begin{lemma}\label{lem: Hilbert mm AM GM}
  For any vector norm $\nm{*}{\cdot}$ induced by the inner product $\jr{\cdot, \cdot}_*$,
  and for any $\theta > 1$, we have
  \seq{
      \mm{*}{\AA + \BB}^2 \leq \theta \mm{*}{\AA}^2 + \frac{\theta}{\theta - 1 } \mm{*}{\BB}^2,\ \forall \AA, \BB\in \MatSize{n}{p}  .
  }
  Especially, $\nm{*}{\cdot}$ can be taken as $\nt{\cdot}$, $\nR{\cdot}$, $\nC{\cdot}$.
\end{lemma}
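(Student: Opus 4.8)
The plan is to reduce the matrix inequality to a single column-wise scalar computation using the definition of $\mm{*}{\cdot}$, and then to invoke the expansion available for inner-product norms together with a Young-type inequality. By Definition~\ref{def: induced matrix norm n by p}, the compound norm splits as $\mm{*}{\AA}^2 = \sum_{j=1}^p \nm{*}{\AA_{:,j}}^2$, so it suffices to establish the stated bound for each column of $\AA+\BB$ separately and then sum over $j$.

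First I would fix a column index $j$ and abbreviate $\aa = \AA_{:,j}$, $\bb = \BB_{:,j}$. Since $\nm{*}{\cdot}$ is induced by the inner product $\jr{\cdot,\cdot}_*$, the identity $\nm{*}{\aa+\bb}^2 = \nm{*}{\aa}^2 + 2\jr{\aa,\bb}_* + \nm{*}{\bb}^2$ holds. I would then control the cross term by Cauchy--Schwarz, $\jr{\aa,\bb}_* \leq \nm{*}{\aa}\,\nm{*}{\bb}$, followed by the weighted AM--GM inequality $2xy \leq t x^2 + t^{-1}y^2$ with $t = \theta - 1 > 0$ (which is just the expansion of $(\sqrt{t}\,x - y/\sqrt{t})^2 \geq 0$). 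This gives $2\jr{\aa,\bb}_* \leq (\theta-1)\nm{*}{\aa}^2 + (\theta-1)^{-1}\nm{*}{\bb}^2$, and substituting back yields exactly $\nm{*}{\aa+\bb}^2 \leq \theta\,\nm{*}{\aa}^2 + \frac{\theta}{\theta-1}\nm{*}{\bb}^2$. Summing this over $j = 1, \dots, p$ and using the column-wise decomposition recovers the matrix version, with the constants $\theta$ and $\theta/(\theta-1)$ passing through unchanged.

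For the ``especially'' claim, I would observe that the Euclidean norm $\nt{\cdot}$ is trivially induced by the standard inner product, and that $\nR{\xx} = \nt{\Rtil\xx}$, $\nC{\xx} = \nt{\Ctil\xx}$ are induced by the bilinear forms $\jr{\xx,\yy}_{\rm R} = (\Rtil\xx)\tp(\Rtil\yy)$ and $\jr{\xx,\yy}_{\rm C} = (\Ctil\xx)\tp(\Ctil\yy)$. These are genuine inner products because $\Rtil$ and $\Ctil$ are invertible by Lemma~\ref{def nR nC}, so that $\Rtil\tp\Rtil$ and $\Ctil\tp\Ctil$ are symmetric positive definite. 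Hence the general statement specializes immediately to all three norms.

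I do not anticipate a genuine obstacle, since the argument is essentially a repackaging of Young's inequality. The one point that must be handled with care is the reduction step itself: the inequality is asserted for the compound norm $\mm{*}{\cdot}$, and it is essential that this norm is the $\ell_2$-aggregation of the column norms, so that summing the per-column bounds is legitimate and the multiplicative constants are preserved. The inner-product structure of $\nm{*}{\cdot}$ is exploited only at the level of individual columns, never on the full matrix.
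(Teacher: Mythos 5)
Your proof is correct and follows essentially the same route as the paper's: reduce to the vector case via the column-wise definition of $\mm{*}{\cdot}$, apply the inner-product expansion together with the Young-type bound $2\jr{\aa,\bb}_* \leq (\theta-1)\nm{*}{\aa}^2 + \frac{1}{\theta-1}\nm{*}{\bb}^2$, and note that $\nR{\cdot}$, $\nC{\cdot}$ are induced by the inner products $\jr{\Rtil\xx,\Rtil\yy}_*$, $\jr{\Ctil\xx,\Ctil\yy}_*$. Your write-up merely spells out the Cauchy--Schwarz step and the summation over columns that the paper leaves implicit.
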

\begin{proof}
    By Definition~\ref{def: induced matrix norm n by p}, it suffices to prove $\nm{*}{\aa+\bb}^2 \leq \theta\nm{*}{\aa}^2 + \frac{\theta}{\theta - 1}\nm{*}{\bb}^2$ for any vectors $\aa, \bb\in \Real^n$.
    And it can be verified by the elementary inequality $2\jr{\aa, \bb}_* \leq \pr{\theta - 1}\nm{*}{\aa}^2 + \frac{1}{\theta - 1}\nm{*}{\bb}^2 $.
    The vector norms $\nR{\cdot}, \nC{\cdot}$ are induced by inner products $\jr{\Rtil\xx, \Rtil\yy}_*, \jr{\Ctil\xx, \Ctil\yy}_*$, respectively.
\end{proof}

The next lemma uses inequalities to derive recursive bounds for the quantities $\expec{\nt{\xa^k - \xx^*}^2}, \expec{\mR{\PR\XX^k}^2}, \expec{\mC{\PC\YY^k}^2}$ and $\expec{\mt{\UU^k - \XX^k}^2}$.
The quantity $\expec{\mt{\YY^k}^2}$ is introduced to help simplify the proof.
\begin{lemma}\label{lem:cpushpullmatineq}
    For $k \geq 0$, take $\dd^k \in \Real^5$ as
    \seq{
        \dd^{k}=\mathbb{E}\bigg[
        \Big(&{\nt{\xa^k - \xx^*}^2}, {\mR{\PR\XX^k}^2}, \\
        &{\mC{\PC\YY^k}^2},
        {\mt{\UU^k - \XX^k}^2}, {\mt{\YY^k}^2}  \Big)\bigg]\tp .
    }
    Define
    a parameterized matrix $\AA\pr{\almax, \beta, \gamma, \eta}  $ as follows
    \seql{\label{eq:defA1}}{
      \AA_{:, 1:3} &=
      \begin{pmatrix}
        1 - c_1\almax & c_2\almax & c_3\almax  \\
        0 & 1 - \theR\beta & 0   \\
        0 & \frac{8  n c_7\beta^2}{\gamma} & 1 - \theC\gamma   \\
        0 & \frac{8  n \beta^2}{\eta} & 0   \\
         c_{10} & c_9 & 3
      \end{pmatrix},\\
      \AA_{:, 4:5} &=
      \begin{pmatrix}
         c_4\beta^2 & 0 \\
         c_6\beta^2 & c_5\frac{\almax^2}{\beta}  \\
         \frac{\cerr{2}n c_7\beta^2}{\gamma} & c_8\gamma^2 + \frac{2c_7 \almax^2}{\gamma}  \\
         \AA_{44} & \frac{2\almax^2}{\eta}  \\
         0 & 0
      \end{pmatrix},
    }
    where  $\AA_{44} = 1 - \eta + 2\cerr{2}\eta^2 + 2n\cerr{2}\beta^2$ and $c_1$-$c_{10}$ are constants given by~(\ref{eq: def c 1}),~(\ref{eq: def c 2}),~(\ref{eq:defc3}) and (\ref{eq:defc4}) in the proof.
    Then, under Assumptions~\ref{assp: require each fi}-\ref{assp: compress} and the condition $\altil \leq \frac{2}{\mu + L}$, we have the following inequalities: 
    \begin{subequations}
        \small
      \begin{numcases}{}
        \expec{\mt{\YY^k}^2} \leq {\AA\pr{\almax, \beta, \gamma, \eta}}_{5, 1:4} \dd^{k}_{1:4}, \label{eq:cYmat1}  \\
        \dd^{k+1}_{1:4} \leq {\AA\pr{\almax, \beta, \gamma, \eta}}_{1:4, :} \dd^{k}.  \label{eq:cmat2}
      \end{numcases}
    \end{subequations}
\end{lemma}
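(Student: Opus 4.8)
The plan is to establish the five scalar estimates that make up the rows of $\AA$ one at a time, each from its own update equation, and then read them off as the matrix inequalities \eqref{eq:cYmat1} and \eqref{eq:cmat2}. Three tools recur throughout: the submultiplicativity $\mm{*}{\WW\AA}\leq\nm{*}{\WW}\mm{*}{\AA}$ of Lemma~\ref{eq: basic calculus of mm}; the contraction estimates $\nR{\PR\Rb}\leq 1-\theR\beta$ and $\nC{\PC\Cg}\leq 1-\theC\gamma$ together with the norm-comparison constants $\traRt,\traCt$ and the bounds $\nR{\RR-\II}\leq 3$, $\nC{\CC-\II}\leq 3$ of Lemma~\ref{def nR nC}; and the weighted Young inequality of Lemma~\ref{lem: Hilbert mm AM GM}, applied with a parameter $\theta$ tuned so that the surviving diagonal term retains the contraction factor while the cross terms absorb the $1/\beta$, $1/\gamma$, or $1/\eta$ blow-ups visible in the off-diagonal entries of $\AA$. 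The decisive structural input is Lemma~\ref{lem: condi expec}: because $\expec{\EX^k\mid\calF_k}=\zero$ and $\expec{\EE^k\mid\calF_k^+}=\zero$, every inner product pairing a compression error against a suitably measurable factor vanishes after taking expectations, so each error enters only through its second moment, which Assumption~\ref{assp: compress} bounds (after taking expectations) by $\cerr{2}\dd^k_4$ for $\EX^k$ and by $\cerr{2}\dd^k_5$ for $\EE^k$.

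For the optimality gap (row $1$) I would start from \eqref{eq: xa expansion}, isolate the descent term $\xa^k-\altil\gg^k-\xx^*$, and invoke Assumption~\ref{assp: require each fi} with the hypothesis $\altil\leq 2/(\mu+L)$ to get $\nt{\xa^k-\altil\gg^k-\xx^*}\leq(1-\altil\mu)\nt{\xa^k-\xx^*}$; the heterogeneity term $\altil(\gg^k-\ya^k)$ is controlled by $L$-smoothness together with $\one\tp\YY^k=\one\tp\na\FF(\XX^k)$ from \eqref{eq: YY track gradient}, yielding an $\mR{\PR\XX^k}$ contribution, the term $\tfrac1n\vecr\tp\aalpha\PC\YY^k$ yields an $\mC{\PC\YY^k}$ contribution, and the mean-zero $\EX^k$ term yields the $c_4\beta^2\dd^k_4$ variance contribution. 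Bounding $\altil\leq\almax$ in the off-diagonals and using $\altil\geq\almax/w$ to rewrite $1-\altil\mu\leq 1-c_1\almax$ produces the pattern $(1-c_1\almax,\,c_2\almax,\,c_3\almax,\,c_4\beta^2,\,0)$. The consensus error (row $2$) comes from \eqref{eq: x - one xa expansion}: taking $\mR{\cdot}^2$, the leading term contracts by $(1-\theR\beta)$, the $\PR\aalpha\YY^k$ term produces the $c_5\almax^2/\beta$ coupling into $\dd^k_5$, and the mean-zero $\beta\PR\RR\EX^k$ term produces $c_6\beta^2\dd^k_4$.

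The tracking error (row $3$) comes from \eqref{eq: y - one ya expansion}. I would split $\PC\YY^{k+1}$ into the $\calF_k^+$-measurable part $\PC\Cg\PC\YY^k+\PC(\na\FF(\XX^{k+1})-\na\FF(\XX^k))$ and the part $\gamma(\II-\CC)\EE^k$, which is mean-zero given $\calF_k^+$, so the two are uncorrelated in expectation; the latter contributes the $c_8\gamma^2$ piece of the $\dd^k_5$ coupling, while a Young split of the former keeps the $(1-\theC\gamma)$ contraction and blows up the gradient-difference term by $\approx 1/\gamma$. Expanding $\XX^{k+1}-\XX^k=\beta(\RR-\II)\PR\XX^k-\aalpha\YY^k-\beta\RR\EX^k$ and applying $L$-smoothness then distributes the $\beta^2/\gamma$ couplings into $\dd^k_2$ and $\dd^k_4$ and the $\almax^2/\gamma$ piece into $\dd^k_5$. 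For the momentum term (row $4$) I would subtract \eqref{eq: X update k+1} from \eqref{eq: U update k+1}, use $\II-\Rb=-\beta(\RR-\II)$ and $(\RR-\II)\XX^k=(\RR-\II)\PR\XX^k$ to obtain $\UU^{k+1}-\XX^{k+1}=(1-\eta)(\UU^k-\XX^k)-\beta(\RR-\II)\PR\XX^k+\aalpha\YY^k+(\beta\RR-\eta\II)\EX^k$, and then square in $\mt{\cdot}$: the $\EX^k$ variance supplies the $2\cerr{2}\eta^2+2n\cerr{2}\beta^2$ addition to the $1-\eta$ diagonal entry $\AA_{44}$, while the $\PR\XX^k$ and $\aalpha\YY^k$ terms give the $8n\beta^2/\eta$ and $2\almax^2/\eta$ couplings.

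Finally, the bound \eqref{eq:cYmat1} for $\dd^k_5$ is not recursive: using $\YY^k=\PC\YY^k+\vecc\ya^k$ I would split $\mt{\YY^k}^2$, bound $\mt{\vecc\ya^k}\leq\nt{\vecc}\nt{\ya^k}$ by Lemma~\ref{eq: basic calculus of mm}, and estimate $\ya^k=\tfrac1n\sum_i(\na f_i(\xx_i^k)-\na f_i(\xx^*))$ via $L$-smoothness together with $\xx_i^k-\xx^*=(\xx_i^k-\xa^k)+(\xa^k-\xx^*)$, which produces exactly the $(c_{10},c_9,3)$ dependence on $(\dd^k_1,\dd^k_2,\dd^k_3)$. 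I expect the row-$3$ estimate to be the main obstacle: it is the only place where the filtration ordering $\calF_k\subset\calF_k^+$ is essential, since the factor $\na\FF(\XX^{k+1})-\na\FF(\XX^k)$ is only $\calF_k^+$-measurable and the $\EE^k$ cross term must be killed by conditioning on $\calF_k^+$ rather than $\calF_k$, and it is also where the nested expansion of $\XX^{k+1}-\XX^k$ forces careful tracking of how the $\beta^2$, $\almax^2$, and $\cerr{2}$ factors distribute across $\dd^k_2,\dd^k_4,\dd^k_5$ once the Young split has been calibrated to the $(1-\theC\gamma)$ contraction.
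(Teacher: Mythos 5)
Your proposal is correct and follows essentially the same route as the paper's proof: the same recursions derived from \eqref{eq: xa expansion}, \eqref{eq: x - one xa expansion}, \eqref{eq: y - one ya expansion} and \eqref{eq: U update k+1}, the cross terms eliminated via Lemma~\ref{lem: condi expec} (including the $\calF_k^+$ conditioning in the tracking-error row, exactly as in the paper), the contractions from Lemma~\ref{def nR nC}, and the Young splits of Lemma~\ref{lem: Hilbert mm AM GM} calibrated the same way against the $1/\beta$, $1/\gamma$, $1/\eta$ factors. The only deviation is cosmetic: for $\mt{\YY^k}^2$ you split $\YY^k=\PC\YY^k+\vecc\ya^k$ and compare against $\na f_i(\xx^*)$ directly, whereas the paper uses the three-term split through $\gg^k=\na f(\xa^k)$; this merely changes the constants $c_9,c_{10}$ (and the hard-coded coefficient $3$) by harmless constant factors.
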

\begin{proof}
    See Supplementary Material~\ref{sec:proofofCpushpullmatineq1}.
\end{proof}

Now, we proceed to show the spectral radius of $\AA\pr{\almax, \beta, \gamma, \eta}$ defined in (\ref{eq:defA1}) is less than $1$ with properly chosen parameters.
\begin{lemma}\label{lem:Arl1}
    Given any $\alp' > 0, \beta' > 0, w\geq \frac{n}{\vecr\tp\vecc}$ and $0 < \eta < \frac{1}{2\cerr{2}}$ with $\cerr{2} > 0$ given in Assumption~\ref{assp: compress},
    there exists $\gamma' > 0$ such that for any $0 < \gamma < \gamma'$, if we take $\almax = \alp'\gamma^3$ and $\beta = \beta'\gamma^2$, then
    \seq{ \rho\pr{\AA\pr{\almax, \beta, \gamma, \eta}} < 1.  }
    More specifically, we can choose $\gamma' = \sup_{\dr{\va_i}_{i=1}^5\in\calP}\min\dr{\prt_1, \prt_2, \prt_3, \prt_4}$, where $\calP = \dr{\dr{\va_i}_{i=1}^5 | \text{$\dr{\va_i}_{i=1}^5$ satisfies (\ref{eq:cvr1}), (\ref{eq:cvr2})}}$ and $\prt_1, \prt_2, \prt_3, \prt_4$ are the minimum positive roots of the polynomials (\ref{eq:defg1})-(\ref{eq:defg4}).
\end{lemma}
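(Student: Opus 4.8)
The plan is to use the standard entrywise criterion for nonnegative matrices: if $\AA \ge 0$ (entrywise) and there is a strictly positive vector $\va \succ \zero$ with $\AA\va$ strictly below $\va$ in every coordinate, then $\rho(\AA) < 1$. This follows from similarity invariance of the spectral radius together with the estimate $\rho(\AA) = \rho\pr{\Diag{\va}^{-1}\AA\Diag{\va}} \le \nm{\infty}{\Diag{\va}^{-1}\AA\Diag{\va}} = \max_i (\AA\va)_i/\va_i$, where the last equality uses $\AA\ge 0$ and $\va\succ\zero$ so that the absolute row sums of the diagonally scaled matrix are exactly $(\AA\va)_i/\va_i$. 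Hence the first step is to record that, under the scalings $\almax = \alp'\gamma^3$ and $\beta = \beta'\gamma^2$ and the hypothesis $0 < \eta < \frac{1}{2\cerr{2}}$, every diagonal entry of $\AA$ is nonnegative for $\gamma$ small (in particular $\AA_{44} = 1 - \eta + 2\cerr{2}\eta^2 + 2n\cerr{2}\beta^2 \ge 0$ since $\eta \le 1$), while all off-diagonal entries are products of nonnegative constants; thus $\AA \ge 0$ once $\gamma$ is sufficiently small.

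The second step is to substitute $\almax = \alp'\gamma^3$, $\beta = \beta'\gamma^2$ into the five scalar inequalities $(\AA\va)_i < \va_i$ and divide each by the dominant power of $\gamma$, turning it into a polynomial inequality of the form $g_i(\gamma) < 0$. The key point is that after this rescaling the $\gamma^0$ coefficient of each reduced inequality is governed purely by the diagonal contraction, and its required negativity is exactly the content of $\calP$: for row $1$ the constant is $\alp'(c_2\va_2 + c_3\va_3 - c_1\va_1)$, whose negativity is condition (\ref{eq:cvr1}); row $5$ is $\gamma$-free and reads $\va_5 > c_{10}\va_1 + c_9\va_2 + 3\va_3$, which is condition (\ref{eq:cvr2}); and rows $2$, $3$, $4$ have constants $-\theR\beta'\va_2$, $-\theC\va_3$, and $-\eta(1 - 2\cerr{2}\eta)\va_4$, each strictly negative by positivity of $\va$ and the range of $\eta$. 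Consequently, for any $\va \in \calP$ every remaining term in each inequality carries a strictly positive power of $\gamma$ and vanishes as $\gamma \to 0^+$.

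The third step converts ``vanishes as $\gamma\to 0$'' into an explicit threshold. Collecting terms shows rows $1$--$4$ reduce to polynomials $g_1,\ldots,g_4$ in $\gamma$ (respectively of degrees $1$, $2$, $4$, $6$) with strictly negative constant terms, so each admits a smallest positive root $\prt_i$, and $g_i(\gamma) < 0$ holds on $(0,\prt_i)$. Taking $\gamma < \min\dr{\prt_1,\prt_2,\prt_3,\prt_4}$ makes all four strict inequalities hold simultaneously, while row $5$ holds by the choice $\va \in \calP$; therefore $\AA\va \prec \va$ entrywise and $\rho(\AA) < 1$. Optimizing over the remaining freedom in $\va$ yields $\gamma' = \sup_{\va \in \calP}\min\dr{\prt_1,\prt_2,\prt_3,\prt_4}$ as claimed, and nonemptiness of $\calP$ is immediate by taking $\va_1$ large relative to $\va_2,\va_3$ and $\va_5$ larger still.

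The main obstacle is bookkeeping rather than conceptual. One must substitute the scalings into every entry of $\AA$, track exactly which power of $\gamma$ each coupling term contributes (for instance $\beta^2/\gamma = (\beta')^2\gamma^3$ in row $3$ and $\almax^2/\beta = ((\alp')^2/\beta')\gamma^4$ in row $2$), and verify that the off-diagonal ``error-coupling'' terms are genuinely of higher order in $\gamma$ than the diagonal contraction terms, so that the constant terms of $g_1,\ldots,g_4$ carry the correct sign. The precise choice $\almax\sim\gamma^3$, $\beta\sim\gamma^2$ is what engineers this separation of scales, and the only real content is confirming that the polynomials $g_i$ possess positive roots, which reduces to the sign conditions (\ref{eq:cvr1})--(\ref{eq:cvr2}) together with $\eta < \frac{1}{2\cerr{2}}$.
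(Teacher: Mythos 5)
Your proposal is correct and follows essentially the same route as the paper's proof: fix $\va \succ \zero$ satisfying (\ref{eq:cvr1})--(\ref{eq:cvr2}), substitute $\almax = \alp'\gamma^3$ and $\beta = \beta'\gamma^2$, check that each component of $\AA\va - \va$ is a polynomial in $\gamma$ whose constant term (after factoring out the dominant power of $\gamma$) is strictly negative, take $\gamma$ below the minimum positive roots $\prt_i$, conclude $\rho\pr{\AA} < 1$ from $\AA\va \prec \va$, and finally optimize over $\va \in \calP$. The only substantive difference is that you prove the nonnegative-matrix comparison criterion yourself via $\rho\pr{\AA} = \rho\pr{\Diag{\va}\inv\AA\Diag{\va}} \le \nm{\infty}{\Diag{\va}\inv\AA\Diag{\va}}$ where the paper simply cites \cite[Corollary~8.1.29]{horn2012matrix}, and your explicit check that $\AA$ is entrywise nonnegative (which for $\AA_{44}$ uses $\eta \le 1$, a condition the algorithm imposes but the lemma statement omits) makes explicit a point the paper asserts without comment.
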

\begin{proof}
    To begin with, we choose positive numbers $\va_1, \va_2, \va_3, \va_4, \va_5$ satisfying
    \begin{subequations}
      \begin{numcases}{}
        c_2\va_2 + c_3\va_3 < c_1\va_1, \label{eq:cvr1}  \\
        c_{10}\va_1 + c_9\va_2 + 3\va_3 < \va_5 \label{eq:cvr2}  .
      \end{numcases}
    \end{subequations}
    Define $\va = \pr{\va_1, \va_2, \va_3, \va_4, \va_5}\tp, $
    then the five entries $\dr{g_i}_{i=1}^5$ of the vector $\br{\AA\pr{\alp'\gamma^3, \beta'\gamma^2, \gamma, \eta}\va - \va}  $ (as functions of $\gamma$) are given by
    \begin{subequations}
      \small
        \begin{align}
        &g_1\pr{\gamma} = \pr{- c_1\va_1 + c_2\va_2 + c_3\va_3}\alp'\gamma^3 \notag \\
        & \qquad\qquad  + c_4\va_4\beta'^2\gamma^4 \label{eq:defg1} \\
        &g_2\pr{\gamma} = -\theR\beta'\va_2\gamma^2 \notag \\
        &\qquad\qquad + \pr{c_6\va_4\beta'^2 + c_5 \va_5 \frac{\alp'^2}{\beta'}} \gamma^4  \label{eq:defg2} \\
        &g_3\pr{\gamma} = -\theC\va_3\gamma + c_8\va_5\gamma^2  \notag \\
        &\qquad\qquad + \pr{8  n c_7\va_2\beta'^2 + \cerr{2} n c_7 \va_4\beta'^2}\gamma^3  \notag \\
        &\qquad\qquad + 2c_7\va_5\alp'^2\gamma^5 \label{eq:defg3} \\
        &g_4\pr{\gamma} = - \pr{\eta - 2\cerr{2}\eta^2}\va_4 \notag \\
        &+ \pr{\frac{8  n\beta'^2\va_2}{\eta} + 2n\cerr{2}\beta'^2\va_4}\gamma^4 + \frac{2\alp'^2\va_5}{\eta}\gamma^6 \label{eq:defg4} \\
        &g_5\pr{\gamma} = c_{10}\va_1 + c_9\va_2 + 3\va_3 - \va_5  .
        \end{align}
    \end{subequations}
    By (\ref{eq:cvr1}), for any $0 < \gamma < \prt_1 = \frac{\alp'\pr{c_1\va_1 - c_2\va_2 - c_3\va_3}}{c_4\va_4\beta'^2} $,
    we have $g_1\pr{\gamma} < 0$,
    where $\prt_1$ is the minimum positive root of $g_1\pr{\gamma}$.

    By direct calculation, $g_2\pr{\gamma} < 0$, for any $0 < \gamma < \prt_2 = \sqrt{\frac{\theR\beta'\va_2}{c_6\va_4\beta'^2 + c_5 \va_5 \frac{\alp'^2}{\beta'}}} $,
    where $\prt_2$ is the minimum positive root of $g_2\pr{\gamma}$.

    Since $g_3\pr{0} = 0$, $g_3'\pr{0} = -\theC\va_3 < 0$ and $\lim_{\gamma\arr+\infty}g_3\pr{\gamma} = +\infty$, $g_3\pr{\gamma}$ has at least one positive root.
    By defining $\prt_3$ as this minimum positive root of $g_3\pr{\gamma}$, we have $g_3\pr{\gamma} < 0$, for any $0 < \gamma < \prt_3$.

    As we have assumed $0 < \eta < \frac{1}{2\cerr{2}}$ and $\va_4 > 0$, $g_4\pr{0} = - \pr{\eta - 2\cerr{2}\eta^2}\va_4 < 0$.
    Since $\lim_{\gamma\arr+\infty}  g_4\pr{\gamma} = +\infty$, $g_4\pr{\gamma}$ has a minimum positive root $\prt_4$.
    Then, we have $g_4\pr{\gamma} < 0$, for any $0 < \gamma < \prt_4$.
    From (\ref{eq:cvr2}), $g_5\pr{\gamma}$ is a negative constant.

    Thus, by the above discussion, for $0 < \gamma < \min$ $\dr{\prt_1, \prt_2, \prt_3, \prt_4}$,
    we have $g_i\pr{\gamma} < 0(1\leq i\leq 5)$, i.e., $ \AA\pr{\almax'\gamma^3, \beta'\gamma^2, \gamma, \eta}\va = \AA\pr{\almax, \beta, \gamma, \eta}\va < \va  . $
    Since $\va$ is a positive vector, $\AA\pr{\almax, \beta, \gamma, \eta}$ is a nonnegative matrix, by \cite[Corollary~8.1.29]{horn2012matrix}, $\rho\pr{\AA\pr{\almax, \beta, \gamma, \eta}} < 1.$
    The above arguments hold true for any positive numbers $\dr{\va_i}_{i=1}^5$ satisfying (\ref{eq:cvr1}) and (\ref{eq:cvr2}).
    Then, by defining $\calP$ as the set containing these positive number sets $\dr{\va_i}_{i=1}^5$, we can choose $\gamma'$ as $\sup_{\dr{\va_i}_{i=1}^5\in\calP}\min\dr{\prt_1, \prt_2, \prt_3, \prt_4}$.
\end{proof}

The next theorem shows that Algorithm~\ref{alg: push pull with compression} converges linearly with proper parameters and stepsizes.
\begin{theorem}\label{thm:linconv}
    Under Assumptions~\ref{assp: require each fi}-\ref{assp: compress},
    given positive numbers $\alp', \beta'$ and $0 < \eta < \frac{1}{2\cerr{2}}$, $\eta \leq 1$, $w \geq \frac{n}{\vecr\tp\vecc}$,
    there exists $\gamma' > 0$ (the value of $\gamma'$ is given in Lemma~\ref{lem:Arl1}) such that
    for any $\gamma$ satisfying
    $0 < \gamma < \gamma'$
    and
    \seql{\label{eq:ga'def2}}{ \gamma \leq \min\dr{1, \beta'^{-1/2}, \pr{\frac{2n}{\alp'\pr{\mu+L}\pr{\vecr\tp\vecc}}}^{1/3}}, }
    if we set $\beta = \beta'\gamma^2$
    and the stepsizes $\dr{\alp_i}_{i\in\calN}$ satisfy
    $\almax = \max_{i\in\calN}\alp_i = \alp'\gamma^3,  $
    with $\almax / \altil \leq w$,
    then
    $\expec{\nt{\xa^k - \xx^*}^2}$, $\expec{\mR{\PR\XX^k}^2}$ converge linearly to $0$.
    More specifically,
    \seq{
        \expec{\nt{\xa^k - \xx^*}^2} &\leq \RL \vr_1 \rho\pr{\AA\pr{\almax, \beta, \gamma, \eta}}^k,  \\
        \expec{\mR{\PR\XX^k}^2} &\leq \RL \vr_2 \rho\pr{\AA\pr{\almax, \beta, \gamma, \eta}}^k  ,
    }
    where $\RL, \vr_1, \vr_2$ are constants given in the proof.
\end{theorem}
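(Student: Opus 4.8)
The plan is to treat the four genuine error quantities in $\dd^k$ as a coupled linear dynamical system governed by a single nonnegative matrix, and then to read off the geometric rate from its Perron structure. Before invoking Lemmas~\ref{lem:cpushpullmatineq} and~\ref{lem:Arl1}, I would first check that the stated parameter choices meet their hypotheses: the bounds $\gamma\le 1$ and $\gamma\le\beta'^{-1/2}$ guarantee $\beta=\beta'\gamma^2\in(0,1]$, while the third term in~(\ref{eq:ga'def2}) forces $\altil\le\almax\,\vecr\tp\vecc/n=\alp'\gamma^3\,\vecr\tp\vecc/n\le\frac{2}{\mu+L}$, exactly the condition required by Lemma~\ref{lem:cpushpullmatineq}; and $0<\gamma<\gamma'$ together with $\almax=\alp'\gamma^3$, $\beta=\beta'\gamma^2$ places us in the regime of Lemma~\ref{lem:Arl1}, so that $\rho\pr{\AA\pr{\almax,\beta,\gamma,\eta}}<1$.

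The central manipulation is to eliminate the auxiliary fifth coordinate $\expec{\mt{\YY^k}^2}$, which is not a true state variable but is controlled at the \emph{same} time index by~(\ref{eq:cYmat1}). Substituting the bound $\dd^k_5\le\AA_{5,1:4}\dd^k_{1:4}$ into the column-$5$ contribution of~(\ref{eq:cmat2}) and using $\AA_{1:4,5}\ge\zero$ yields a closed four-dimensional recursion $\dd^{k+1}_{1:4}\le\BB\,\dd^k_{1:4}$ with the nonnegative matrix $\BB=\AA_{1:4,1:4}+\AA_{1:4,5}\AA_{5,1:4}$. It then remains to show that iterating this recursion produces decay at precisely the rate $\rho\pr{\AA}$.

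To obtain exactly the rate $\rho\pr{\AA}$ (rather than merely some rate below $1$), I would use the Perron eigenvector of the full matrix rather than the comparison vector $\va$ of Lemma~\ref{lem:Arl1}. Since the constants $c_1,\dots,c_{10}$ and the parameters $\almax,\beta,\gamma,\eta$ are positive, $\AA$ is irreducible, so by Perron--Frobenius it has a strictly positive eigenvector $\vr$ with $\AA\vr=\rho\,\vr$, where $\rho=\rho\pr{\AA}<1$. Writing the eigenrelation row-by-row gives $\AA_{5,1:4}\vr_{1:4}=\rho\,\vr_5$ from the fifth row and $\AA_{1:4,1:4}\vr_{1:4}+\AA_{1:4,5}\vr_5=\rho\,\vr_{1:4}$ from the first four, whence
\seq{
  \BB\,\vr_{1:4}
  &= \AA_{1:4,1:4}\vr_{1:4}+\AA_{1:4,5}\pr{\rho\,\vr_5} \\
  &= \rho\,\vr_{1:4}-\pr{1-\rho}\AA_{1:4,5}\vr_5
  \le \rho\,\vr_{1:4},
}
because $1-\rho>0$, $\AA_{1:4,5}\ge\zero$, and $\vr_5>0$. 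Thus $\BB\vr_{1:4}\le\rho\,\vr_{1:4}$ with $\vr_{1:4}>0$, so the weighted max-norm $\|\zz\|_{\vr}=\max_{1\le i\le4}\zz_i/\vr_i$ satisfies $\|\BB\zz\|_{\vr}\le\rho\|\zz\|_{\vr}$ for every $\zz\ge\zero$. Applying this to the nonnegative vectors $\dd^k_{1:4}$ gives $\|\dd^{k+1}_{1:4}\|_{\vr}\le\rho\|\dd^k_{1:4}\|_{\vr}$, hence $\dd^k_{1:4}\le\rho^k\|\dd^0_{1:4}\|_{\vr}\,\vr_{1:4}$ entrywise. Reading off the first two entries, which are $\expec{\nt{\xa^k-\xx^*}^2}$ and $\expec{\mR{\PR\XX^k}^2}$, yields the claimed bounds with $\RL=\|\dd^0_{1:4}\|_{\vr}$ and $\vr_1,\vr_2$ the corresponding Perron components.

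The main obstacle is the proper handling of the auxiliary coordinate: because~(\ref{eq:cYmat1}) bounds $\dd^k_5$ in terms of $\dd^k_{1:4}$ at the same step rather than advancing it in time, one cannot simply assert $\dd^{k+1}\le\AA\dd^k$ and quote a standard spectral-radius lemma, so the substitution producing $\BB$ is essential. The second delicate point is obtaining the sharp rate $\rho\pr{\AA}$, which hinges on the identity $\BB\vr_{1:4}=\rho\,\vr_{1:4}-\pr{1-\rho}\AA_{1:4,5}\vr_5$ and on $\rho<1$ from Lemma~\ref{lem:Arl1}; using the comparison vector $\va$ instead would only give some rate strictly below $1$ that is not identifiable with $\rho\pr{\AA}$.
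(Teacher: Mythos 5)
Your proposal is correct and follows essentially the same route as the paper: both check the parameter conditions so that Lemmas~\ref{lem:cpushpullmatineq} and~\ref{lem:Arl1} apply, invoke Perron--Frobenius on the full $5\times 5$ matrix $\AA$ to obtain a positive eigenvector $\vr$ with $\AA\vr=\rho\pr{\AA}\vr$ and $\rho\pr{\AA}<1$, and exploit $\AA_{5,5}=0$ to absorb the same-time-index bound (\ref{eq:cYmat1}). Your explicit reduction to $\BB=\AA_{1:4,1:4}+\AA_{1:4,5}\AA_{5,1:4}$ followed by the weighted max-norm contraction is just a repackaging of the paper's induction, which instead bounds $\dd^k_5$ inside the inductive step via the fifth row of the eigenrelation, and both yield the identical rate $\rho\pr{\AA}$ and constant $\RL=\max_{1\leq i\leq 4}\dd^0_i/\vr_i$.
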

\begin{proof}
    Denote $\AA = \AA\pr{\almax, \beta, \gamma, \eta}$ for simplicity.
    Since $\AA$ is a regular nonnegative matrix\footnote{A matrix $\AA$ is said to be regular if all the entries of $\AA^k$ are positive for some integer $k\geq 0$. }, by the Perron-Frobenius theorem~\cite{berman1994nonnegative}, $\rho\pr{\AA}$ is an eigenvalue of $\AA$, and
    $\AA$ has a unique right positive eigenvector $\vr$  with respect to the eigenvalue $\rho\pr{\AA}$.
    Define the constant
    $
        \RL = \max_{1\leq i\leq 4} \frac{\dd^0_i}{\vr_i}  .
    $
    Thus, by the definition of $\RL$,
    $
        \dd^{0}_{1:4} \leq \RL \vr_{1:4} .
    $

    Next, we prove the linear convergence by induction.
    If we have proved $\dd^{k}_{1:4} \leq \RL\rho\pr{\AA}^k\vr_{1:4}$, we will show that it also holds true for $k + 1$.
    The requirement (\ref{eq:ga'def2}) guarantees that $\gamma \leq 1$, $\beta = \beta'\gamma^2 \leq 1$, $\altil \leq  \frac{\pr{\vecr\tp\vecc}\almax}{n} = \frac{\pr{\vecr\tp\vecc}\alp'\gamma^3}{n} \leq \frac{2}{\mu + L} $.
    According to Lemma~\ref{lem:Arl1}, $\rho\pr{\AA} < 1$.

    By (\ref{eq:cYmat1}) and the inductive hypothesis, there holds
    \seq{
         &\expec{\mt{\YY^k}^2} \leq \AA_{5,1:4}\dd^{k}_{1:4} \leq \RL\rho\pr{\AA}^k\AA_{5,1:4}\vr_{1:4}  \\
        =& \RL\rho\pr{\AA}^k\AA_{5,:}\vr
        = \RL\rho\pr{\AA}^{k+1}\vr_5 \leq \RL\rho\pr{\AA}^k\vr_5  .
    }
    Combining with the inductive hypothesis, we obtain
    \seql{\label{eq:dkRLrhokvr1}}{
        \dd^{k} \leq \RL\rho\pr{\AA}^k\vr  .
    }
    Now, by (\ref{eq:cmat2}) and (\ref{eq:dkRLrhokvr1}),
    \seq{
        \dd^{k+1}_{1:4} \leq \AA_{1:4, :}\dd^{k} \leq \RL\rho\pr{\AA}^k\AA_{1:4,:}\vr
        = \RL\rho\pr{\AA}^{k+1}\vr_{1:4}  ,
    }
    i.e., the statement holds for $k + 1$.
    Therefore, by induction,
    $
        \dd^{k}_{1:4} \leq \RL\rho\pr{\AA}^{k}\vr^{k}_{1:4}  ,
    $
    for any $k \geq 0$, which completes the proof.
\end{proof}

\begin{remark}\label{rem:chooseparastp1}
    In practice, we can take for instance $\alp' = \frac{1}{L}$, $\beta' = 1$, $\gamma = 1$,  $\eta = \min\dr{\frac{1}{2\cerr{2}}, 1}$.
    Then, we hand-tune $\gamma$ under the relations $\alp_i = \almax = \alp'\gamma^3 (\forall i\in\calN)$ and $\beta = \beta'\gamma^2$ to achieve the optimal  performance.
\end{remark}
\begin{remark}
    We can also add momentums for the communication of $\YY^k$ like what we did for $\XX^k$.
    In this case, linear convergence can be proved similarly.
    However, we see little improvement in the numerical experiments when $\YY^k$ is equipped with momentums.
    In addition, more momentums will take more storage space.
    Therefore, we omit the details of this case here.
\end{remark}

\section{A Broadcast-like Gradient Tracking Method with Compression (B-CPP)}
\label{sec:BCPP}
In this section, we consider a broadcast-like gradient tracking method  with compression (B-CPP). Broadcast or gossip-based communication protocols are popular for distributed computation due to their low communication costs \cite{aysal2009broadcast,boyd2006randomized,pu2020distributed}.
In B-CPP, at every iteration $k$, one agent $i_k\in \calN$ wakes up with uniform probability. This can be easily realized, for example, if each agent wakes up according to an independent Poisson clock with the same parameter. Hence the probability $\prob{i_k = j} = \frac{1}{n}$ for any $j\in \calN$.
In addition, $\dr{i_k}_{k\geq 0}$ are independent with each other.

Briefly speaking, each iteration $k$ of the B-CPP algorithm consists of the following steps:
\begin{enumerate}
  \item One random agent $i_k$ wakes up.
  \item Agent $i_k$ sends information to all its out-neighbors in $\RR$ ($\calN_{\rmR, i_k}^+$) and  $\CC$ ($\calN_{{\rm C}, i_k}^+$).
  \item The agents who received information from $i_k$ are awaken and update their local variables.
\end{enumerate}
We remark that the number $k$ is only used for analysis purpose and does not need to be known by the agents.
B-CPP can be generalized to the case when each agent wakes up with different {but known probabilities}.
The awakened agents will know whether they are $i_k$ and whether they are in the set $\calN_{\rmR, i_k}^+$ or in the set $\calN_{\rmC, i_k}^+$, and will take different actions accordingly.
The B-CPP method is illustrated in Algorithm~\ref{alg: BC-Push-Push}.
\begin{algorithm}
\caption{B-CPP }
\label{alg: BC-Push-Push}
\KwIn{each agent $i$ is instilled its initial position $\xx_i$, parameters $\beta, \gamma, \eta \in (0, 1]$, stepsize $\alp_i$ and network information $\RR_{ij}(j\in \calN_{\rmR, i}^-)$, $\CC_{ij}(j\in \calN_{\rmC, i}^-)$ and the sets $\calN_{\rmR, i}^+$, $\calN_{\rmC, i}^+$
}
\KwOut{each agent $i$ outputs its final local decision variable $\xx_i$
}

Each agent $i$ initializes $\yy_i^0 = \na f_i\pr{\xx_i^0}, \uu_i^{0} = \zero, \uu_{\rmR, i}^{0} = \zero$. \;
\For{ $k=0$ to $K-1$  }{
    Agent $i_k$ awakes. \;
    Agent $i_k$ compresses $\pp^k = \compress\pr{\xx_{i_k} - \uu_{i_k}}$, sends $\pp^k$ to all agents $j\in \calN_{\rmR, i_k}^+$ and wakes up these agents. \;
    Agent $i_k$ compresses $\qq^k = \compress\pr{\yy_{i_k}}$, sends $\qq^k$ to all agents in $\calN_{\rmC, i_k}^+$ and wakes up these agents. \;
    For all agents $j\in \dr{i_k}\cup\calN_{\rmR, i_k}^+\cup\calN_{\rmC, i_k}^+$, agent $j$ records $\gratemp_j \arrlf \nabla f_j\pr{\xx_{j}} $.  \;
    For $j\in \calN_{\rmR, i_k}^+$, agent $j$ updates
    \begin{subequations}\label{eq:bxupdate}
    \small
    \begin{align}
         &\uu_{\rmR, j} \arrlf \uu_{\rmR, j} + \eta n \RR_{j, i_k}\pp^k , \label{eq:bxupdate1}  \\
         &\xx_j \arrlf \pr{1 - \frac{\beta n}{\degR_j}}\xx_j \notag \\
         & \qquad\quad + \frac{\beta n}{\degR_j}\uu_{\rmR, j} + \beta n\RR_{j, i_k}\pp^k  . \label{eq:bxupdate2}
    \end{align}
    \end{subequations}
    Agent $i_k$ updates $\uu_{i_k} \arrlf \uu_{i_k} + \eta n \pp^k $. \;
    For all $j\in \dr{i_k}\cup\calN_{\rmR, i_k}^+\cup\calN_{\rmC, i_k}^+$, agent $j$ updates $\xx_j \arrlf \xx_j - \alp_j\yy_j$ . \;
    For all $j\in \dr{i_k}\cup\calN_{\rmR, i_k}^+ \cup \calN_{\rmC, i_k}^+$, agent $j$ updates
        $\yy_j \arrlf \yy_j + \nabla f_j\pr{\xx_j} - \gratemp_j  $. \;
    Agent $i_k$ update $\yy_{i_k} \arrlf \yy_{i_k} - \gamma n \qq^k $. \;
    For $j\in \calN_{\rmC, i_k}^+$, agent $j$ updates
        $\yy_j \arrlf \yy_j + \gamma n \CC_{ji_k} \qq^k .   $
}
Return the final local decision variable $\dr{\xx_i}_{i\in\calN}$.

\end{algorithm}
%
%
%
To implement communication compression in a broadcast setting, a naive way is to let each agent $i$ hold different momentums $\uu_{i, j}$ for each neighbors $j$.
In this way, at each time agent $i$ receives information $\pp$ from neighboring agent $j$, it can restore the information sent from agent $j$ directly by summing $\pp + \uu_{i,j}$.
However, this procedure would require momentums as many as the total number of edges. Hence it could be impractical when the storage space is limited and the graph is dense.
In B-CPP, we overcome this issue so that each agent only uses 2 momentums.

To help analyze the convergence rate of Algorithm~\ref{alg: BC-Push-Push},
for $0\leq k\leq K$, let us define the final state of variable $\xx_j$ before iteration $k$ as $\xx_j^k$.
These $\xx_j^k$ are written compactly into an $n$-by-$p$ matrix $\XX^k$. And $\YY^k, \UU^k, \UU_{\rmR}^k, \gF{k}$ are defined analogously.
Let $\calD_k$ be the $\sigma$-field generated by $\dr{i_j, \pp^j, \qq^j}_{0\leq j\leq k-1 }$.
Let $\calD_k^+$ denote the $\sigma$-field generated by $\calD_k$ and $i_k$.
Let $\calD_k^{++}$ denote the $\sigma$-field generated by $\calD_k$ and $\dr{i_k, \pp^k}$.
For any event $A$, let $\chi_A$ denote the indicator function of $A$.

{
The differences between B-CPP and CPP mainly lie in the averaging step.
Taking the averaging step for $\XX^k$ as an example.
Firstly,
we also have by induction that
$
  \UU_{\rmR}^k = \RR\UU^k,\ \ \forall 0\leq k\leq K  .
$}
For $i\in\calN$, denote the in-degree of $i$ in $\RR$ by $\degR_i = \abs{\calN_{\rmR, i}^-} $.
Since $\RR$ is row stochastic, $r_i > 0$ for any $i\in \calN$.
Notice that the update step (\ref{eq:bxupdate1}) will be implemented by agent $j$ when agent $i_k$ is an in-neighbor of agent $j$ (or equivalently, $j$ is an out-neighbor of $i_k$).
\seq{
    &\expec{\chi_{\dr{j\in \calN_{\rmR, i_k}^+}} n\RR_{j, i_k}\pp^k | \calD_k}  \\
    =& \expec{\chi_{\dr{j\in \calN_{\rmR, i_k}^+}} n\RR_{j, i_k}\expec{\pp^k | i_k} | \calD_k}  \\
    =& \expec{ n\RR_{j, i_k}\pr{\xx_{i_k}^k - \uu_{i_k}^k} | \calD_k}\\
    =& \frac{1}{n}\sum_{i_k=1}^n n\RR_{j, i_k}\pr{\xx_{i_k}^k - \uu_{i_k}^k} \\
    =& \RR_{j,:}\pr{\XX^k - \UU^k}  .  
}
Analogously,
\seq{
    &\expec{\chi_{\dr{j\in \calN_{\rmR, i_k}^+}} \pr{1 - \frac{\beta n}{\degR_j}}\xx_j^k  | \calD_k}  \\
    =& \frac{1}{n}\sum_{i_k\in\calN,\ \RR_{j,i_k} > 0}\pr{1 - \frac{\beta n}{\degR_j}}\xx_j^k  \\
    =& \frac{r_j}{n}\xx_j^k - \beta \xx_j^k  ,
}
and
\seq{
    &\expec{\chi_{\dr{j\in \calN_{\rmR, i_k}^+}}\pr{\frac{n}{\degR_j}\uu_{\rmR, j}^k} | \calD_k}  \\
    =&  \frac{n}{\degR_j}\uu_{\rmR, j}^k \prob{j\in \calN_{\rmR, i_k}^+ | \calD_k  }  \\
    =& \frac{1}{n}\sum_{i_k\in\calN,\ \RR_{j, i_k} > 0}\frac{n}{\degR_j} \uu_{\rmR, j}^k \\  
    =& \uu_{\rmR, j}^k
    = \RR_{j,:}\UU^k  .
}
Thus, taking expectation on the RHS of (\ref{eq:bxupdate2}) yields
\begin{equation*}
    \small
    \begin{split}
      &\mathbb{E}\Big[\chi_{\dr{j\notin \calN_{\rmR, i_k}^+}} \xx_j^k+  \chi_{\dr{j\in \calN_{\rmR, i_k}^+}}\pr{\pr{1 - \frac{\beta n}{\degR_j}}\xx_j^k}  \\
    & +\chi_{\dr{j\in \calN_{\rmR, i_k}^+}}\pr{\frac{\beta n}{\degR_j}\uu_{\rmR, j}^k + \beta n\RR_{j, i_k}\pp^k} \Big| \calD_k\Big]  \\
    =& \left(\frac{n - \degR_j}{n} + \frac{r_j}{n} - \beta\right) \xx_j^k + \beta\RR_{j,:}\pr{\UU^k+\XX^k - \UU^k}  \\
    =& \br{\Rb}_{j,:}\XX^k  .
    \end{split}
\end{equation*}
{
    Briefly speaking, after taking conditional expectation, the averaging step of B-CPP reduces to that of CPP (the first term of the RHS of (\ref{eq: X update k+1})).
By choosing a proper value for $\beta$, the variance of the RHS of (\ref{eq:bxupdate}) can be made small enough.
In this way, the averaging step in B-CPP could have a similar effect to that of CPP.
}

To show the linear convergence of B-CPP,
for simplicity, we assume $\alp = \alp_1 = \alp_2 = \cdots = \alp_n$.
We remark that the analysis below can be easily generalized to the case when the stepsizes differ among the agents.


We denote $\one_i\in\Real^{n\times 1}$ as the vector with $1$ on the $i$-th component and $0$ on the others.
Define
\seq{
     \PP^k &= \XX^k - \UU^k + \one_{i_k}\pr{\pp^k - \xx_{i_k}^k + \uu_{i_k}^k}, \\ \QQ^k &= \YY^k + \one_{i_k}\pr{\qq^k - \yy_{i_k}^k}  .
}
As $\UU^k + \PP^k$ and $\QQ^k$ are used to estimate $\XX^k$ and $\YY^k$, respectively, we define the error matrices induced by the compression as follows
\seq{
    \EX^k &= \XX^k - \UU^k - \PP^k = \one_{i_k}\pr{\xx_{i_k}^k - \uu_{i_k}^k - \pp^k}  ,  \\
    \EY^k &= \YY^k - \QQ^k = \one_{i_k}\pr{\yy_{i_k}^k - \qq^k}  .
}
The following lemma is a direct corollary of Assumption~\ref{assp: compress}.
\begin{lemma}\label{assp:bindept.}
    The random variable $i_k$ is independent with $\calD_k$.
    And
    $
        \expec{\EX^k | \calD_k^+} = \zero,\
        \expec{\EY^k | \calD_k^{++}} = \zero  .
    $
\end{lemma}
Moreover, by Assumption~\ref{assp: compress},
{\small
\begin{align}
  &\expec{\mt{\EX^k}^2 | \calD_{k}^+} = \expec{\nt{\xx_{i_k}^k - \uu_{i_k}^k - \pp^k}^2|\calD_{k}^+}  \notag    \\
  \leq& \cerr{2}\expec{\nt{\xx_{i_k}^k - \uu_{i_k}^k}^2 | \calD_k} = \frac{\cerr{2}}{n} \mt{\XX^k - \UU^k}^2 , \label{eq:bEX}  \\
  &\expec{\mt{\EY^k}^2 | \calD_{k}^{++}} = \expec{\nt{\yy_{i_k} - \qq^k}^2|\calD_{k}} \notag    \\
  \leq& \cerr{2}\expec{\nt{\yy_{i_k}^k}^2 | \calD_k^{++}} = \frac{\cerr{2}}{n} \mt{\YY^k}^2 . \label{eq:bEY}
\end{align}
}
We will also use the following random matrices, which are all measurable with respect to $i_k$,
\seq{
  \IW^k &= n\Diag{\one_{i_k}},
  \AW^k = \sum_{j\in \dr{i_k}\cup\calN_{\rmR, i_k}^+\cup\calN_{\rmC, i_k}^+} \Diag{\one_j}, \\
  \RW^k &= \sum_{j\in \calN_{\rmR, i_k}^+} \frac{n}{\degR_j}\Diag{\one_j},
  \RU^k = \sum_{j\in \calN_{\rmR, i_k}^+} \frac{n}{\degR_j}\one_j\RR_{j, :},  \\
  \RP^k &= n\RR_{:, i_k}\one_{i_k}\tp,
  \CQ^k = n\CC_{:, i_k}\one_{i_k}\tp  .
}
Now, Algorithm~\ref{alg: BC-Push-Push} can be rewritten into a more compact form:
\begin{subequations}\label{eq: B-push-pull}
  \small
  \begin{numcases}{}
    \XX^{k+1} = \pr{\II - \beta\RW^k} \XX^k + \beta\RW^k \RR\UU^k \notag \\
    \qquad\qquad + \beta\RP^k \PP^k
     - \alp \AW^k \YY^k, \label{eq:bXupdate}  \\
    \YY^{k+1} = \YY^k + \gamma\pr{\CQ^k - \IW^k}\QQ^k \notag \\
    \qquad\qquad + \gF{k+1} - \gF{k},  \label{eq:bYupdate}  \\
    \UU^{k+1} = \UU^k + \eta\IW^k\PP^k.  \label{eq:bUupdate}
  \end{numcases}
\end{subequations}
{
Compared to CPP, the stochastic matrices $\RP^k, \RU^k, \IW^k, \AW^k, \RW^k$ will induce additional errors that need to be dealt with in the convergence analysis of B-CPP.
}

To analyze the convergence rate of B-CPP, similar to what we did for CPP, we also use $\expec{\nt{\xa^k - \xx^*}^2}$ ,$\expec{\mR{\PR\XX^k}^2}, \expec{\mC{\PC\YY^k}^2}$ and $\expec{\mt{\UU^k - \XX^k}^2}$ to measure the closeness to the optimal point, consensus error, gradient tracking error and the convergence of the momentums, respectively.
And we  use $\expec{\mt{\YY^k}^2}$, $\expec{\mt{\XX^{k+1}- \XX^k}^2}$ and $\expec{\mt{\ro^k}^2}$ to help simplify the recursive relations between the above four quantities.
The definition of $\ro^k$ is given in (\ref{defro1}).

\begin{lemma}\label{lem:bmatineq1}
    For $k \geq 0$, define $\db^k \in \Real^7$ as an extension of $\dd^k$:
    \seq{
        \db^{k} =
        \Big(
           \dd^k, \expec{\mt{\ro^k}^2}, \expec{\mt{\XX^{k+1} - \XX^k}^2}
        \Big)\tp  .
    }
    Define
    a parameterized matrix $\Ab\pr{\alp, \beta, \gamma, \eta}$ as follows
    \seql{\label{eq:bdefA}}{
         \Ab_{:, 1:3} &=
        \begin{pmatrix}
          1 - d_1\alp & d_2\alp & d_3\alp   \\
          0 & 1 - \theR\beta & 0    \\
          0 & 0 & 1 - \theC\gamma    \\
          0 & 0 & 0   \\
           c_{10} & c_9 & 3   \\
          0 & 3\Vara\beta^2 & 0    \\
          0 & 18\beta^2 & 0
        \end{pmatrix}  , \\
        \Ab_{:, 4:7} &=
        \begin{pmatrix}
           0 & 0 & d_4 & 0  \\
           0 & \frac{\alp^2}{\beta\theR} & \traRt^2 & 0   \\
           0 & \frac{d_5\gamma^2}{1 - \theC\gamma} + d_7\gamma^2 & 0 & \frac{d_6}{\gamma}   \\
           \Ab_{44} & 0 & 0 & \frac{2}{\eta}  \\
            0 & 0 & 0 & 0  \\
           d_{9}\beta^2 & 3\Vard\alp^2 & 0 & 0   \\
           0 & 2\alp^2 & 1 & 0
        \end{pmatrix}
    }
    where $\Ab_{44} = 1 - \eta + \frac{\eta^2(n-1)^2}{1 - \eta} + d_8\frac{\beta^2}{\eta} + 2\cerr{2}n\eta^2 + d_8\beta^2$ and
    the constants $d_1$-$d_{9}$ are given by~(\ref{defd1}),~(\ref{defd2}),~(\ref{eq:defd3}) and~(\ref{eq:defd4}) in the proof.
    Then, under Assumptions~\ref{assp: require each fi}-\ref{assp: compress} and the condition $\alb \leq \frac{2}{\mu + L}$, we have the following inequalities: 
    \begin{subequations}
        \small
      \begin{numcases}{}
        \expec{\mt{\YY^k}^2} \leq \Ab\pr{\alp, \beta, \gamma, \eta}_{5,1:4}\db^{k}_{1:4} , \label{eq:bYkAb1}  \\
        \expec{\mt{\ro^k}^2} \leq \Ab\pr{\alp, \beta, \gamma, \eta}_{6,1:5}\db^{k}_{1:5} , \label{eq:brokAb1}  \\
        \expec{\mt{\XX^{k+1} - \XX^k}^2} \leq \Ab\pr{\alp, \beta, \gamma, \eta}_{7,1:6}\db^{k}_{1:6} ,  \label{eq:bXk+1kAb1}  \\
        \db^{k+1}_{1:4} \leq \Ab\pr{\alp, \beta, \gamma, \eta}_{1:4, :}\db^k  .  \label{eq:bdk+1Ab1}
      \end{numcases}
    \end{subequations}
\end{lemma}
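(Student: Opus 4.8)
The plan is to take the compact recursion (\ref{eq: B-push-pull}) and, for each of the seven quantities collected in $\db^k$, expand the relevant squared norm and evaluate it through the nested conditioning $\calD_k\subset\calD_k^+\subset\calD_k^{++}$ by the tower property. The organizing principle is a bias--variance split: for a generic one-step update $\ZZ$ that depends on the broadcast index $i_k$ and the compression noise, I would write $\expec{\mt{\ZZ}^2}=\expec{\mt{\expec{\ZZ\mid\calF}}^2}+\expec{\mt{\ZZ-\expec{\ZZ\mid\calF}}^2}$ for the appropriate sub-$\sigma$-field $\calF$. The innermost expectation removes the compression randomness via Lemma~\ref{assp:bindept.} together with the second-moment bounds (\ref{eq:bEX})--(\ref{eq:bEY}); the next expectation averages over the uniform $i_k$, and by the calculation preceding the lemma the conditional mean of the averaging step collapses onto the CPP operator $\Rb$. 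Consequently the mean part of every recursion mirrors the CPP projections in (\ref{eq: xa expansion}), (\ref{eq: x - one xa expansion}), (\ref{eq: y - one ya expansion}), so the diagonal contraction factors $1-d_1\alp$, $1-\theR\beta$, $1-\theC\gamma$ and the cross terms in columns $1$--$3$ are inherited in spirit from Lemma~\ref{lem:cpushpullmatineq}; the genuinely new work lies in bounding the conditional variances produced by the random matrices $\IW^k,\AW^k,\RW^k,\RU^k,\RP^k,\CQ^k$.

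I would derive the four inequalities in the order (\ref{eq:bYkAb1}), (\ref{eq:brokAb1}), (\ref{eq:bXk+1kAb1}), (\ref{eq:bdk+1Ab1}), since the auxiliary rows feed into one another and into the core recursion. For (\ref{eq:bYkAb1}) I would first observe that, exactly as in (\ref{eq: YY track gradient}), the identity $\one\tp\YY^k=\one\tp\gF{k}$ survives the broadcast because $\one\tp(\CQ^k-\IW^k)=\zero\tp$ for every $i_k$ (using column stochasticity of $\CC$). Splitting $\YY^k=\PC\YY^k+\vecc\ya^k$ and bounding $\nt{\ya^k}=\frac1n\nt{\one\tp\gF{k}}$ through $L$-smoothness and strong convexity (Assumption~\ref{assp: require each fi}) with Lemma~\ref{lem: Hilbert mm AM GM} reproduces the row-$5$ coefficients $(c_{10},c_9,3,0)$. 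The core recursion (\ref{eq:bdk+1Ab1}) then follows the three CPP projections applied to (\ref{eq: B-push-pull}): multiply (\ref{eq:bXupdate}) by $\vecr\tp/n$ for row $1$, where the descent direction is supplied through the effective averaged stepsize $\alb$ (which absorbs the broadcast activation weighting of $\AW^k$) and strong convexity under $\alb\le\frac{2}{\mu+L}$ yields $1-d_1\alp$; apply $\PR$ for row $2$; and apply $\PC$ to (\ref{eq:bYupdate}) for row $3$. Crucially, the gradient-tracking row picks up $\gF{k+1}-\gF{k}$, bounded by $L^2\mt{\XX^{k+1}-\XX^k}^2$ through $L$-smoothness; this is the source of the column-$7$ entry $d_6/\gamma$ and the reason the step difference must itself be tracked. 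Row $4$ comes directly from (\ref{eq:bUupdate}) and (\ref{eq:bEX}), where the $\eta^2(n-1)^2/(1-\eta)$ and $2\cerr{2}n\eta^2$ terms in $\Ab_{44}$ are precisely the broadcast and compression variances of $\eta\IW^k\PP^k$.

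For the two remaining auxiliary bounds I would expand the defining expressions directly. The quantity $\ro^k$ of (\ref{defro1}) collects the broadcast fluctuation of the $\XX$-averaging step, so (\ref{eq:brokAb1}) is obtained by applying the variance split to $\beta\RW^k\RR\UU^k+\beta\RP^k\PP^k$ minus its $\calD_k$-mean, and estimating the resulting sums over $(n/\degR_j)^2$ and $n^2\RR_{j,i_k}^2$, which yield the coefficients $3\Vara\beta^2$, $d_9\beta^2$ and $3\Vard\alp^2$ on columns $2,4,5$. For (\ref{eq:bXk+1kAb1}) I would take $\XX^{k+1}-\XX^k$ from (\ref{eq:bXupdate}) and use Lemma~\ref{lem: Hilbert mm AM GM} to separate the averaging displacement (controlled by $\mR{\PR\XX^k}^2$ with factor $18\beta^2$ and by $\ro^k$ with factor $1$) from the gradient step (controlled by $\mt{\YY^k}^2$ with factor $2\alp^2$), invoking $\nR{\RR}\le2$, $\nR{\RR-\II}\le3$ and the boundedness of $\RW^k,\AW^k$ from Lemma~\ref{def nR nC}. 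Throughout, every factor of $n$ introduced by the $n\RR_{j,i_k}$ and $n/\degR_j$ weights must be carried explicitly so that the off-diagonal entries of $\Ab$ scale as the stated powers of $\alp,\beta,\gamma,\eta$.

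The main obstacle is the bookkeeping of the conditional variances of the six random matrices: unlike CPP, where the mixing is deterministic, here each averaging and gradient step carries an $O(n)$-amplified fluctuation, and these fluctuations cross-couple the seven quantities. The delicate point is to route every such term into a coordinate that already carries a sufficiently high power of the small parameters --- this is exactly why $\ro^k$ and $\mt{\XX^{k+1}-\XX^k}^2$ are introduced as separate coordinates, and why (\ref{eq:brokAb1}) and (\ref{eq:bXk+1kAb1}) are only instantaneous (rather than recursive) bounds. I expect the most error-prone steps to be reconciling the broadcast-weighted mean of the gradient step with the $\vecr$-average in row $1$, and verifying that the variance of $\eta\IW^k\PP^k$ in row $4$ and the $\gF{k+1}-\gF{k}$ coupling in row $3$ close consistently with the auxiliary bounds, so that the resulting $7\times7$ matrix $\Ab$ retains the triangular dependence (rows $5$--$7$ depending only on strictly earlier coordinates) needed for the subsequent spectral-radius argument.
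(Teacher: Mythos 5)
Your proposal is correct and follows essentially the same route as the paper's proof: the bias--variance split under the nested conditioning $\calD_k\subset\calD_k^+\subset\calD_k^{++}$ is exactly the paper's elimination of cross terms via $\expec{\ro^k\mid\calD_k}=\zero$, the collapse of the conditional mean onto the CPP operator $\Rb$ (with the fluctuations collected into $\ro^k$), the projections $\vecr\tp/n$, $\PR$, $\PC$ for rows 1--3, the $\UU$-update for row 4, and the direct expansions giving the auxiliary rows with coefficients $3\Vara\beta^2$, $d_9\beta^2$, $3\Vard\alp^2$, $18\beta^2$, $2\alp^2$ all match the paper's derivation. The only nitpick is attributional: the boundedness of $\AW^k$ and $\RW^k$ is elementary from their definitions (diagonal entries in $[0,n/\degR_j]$, and $\nt{\AW}<1$), not a consequence of Lemma~\ref{def nR nC}, which concerns only the deterministic norms built from $\RR$ and $\CC$.
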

\begin{proof}
    See Supplementary Material~\ref{sec:pfbmatineq1}.
\end{proof}

Next, we show that with properly chosen parameters and stepsize, the spectral radius of the parameterized matrix $\Ab\pr{\alp, \beta, \gamma, \eta}$ in (\ref{eq:bdefA}) will be less than~$1$.
\begin{lemma}\label{lem:bxpa1}
    Given any positive numbers $\alp', \beta'$ and $\eta$ satisfying
    \seql{\label{eq:bpaxeta1}}{0 < \eta \leq \frac{1}{2}, \quad \eta < \frac{1}{2(n-1)^2 + 2\cerr{2}n}, }
    there exists $\gamma' > 0$ such that for any $0 < \gamma < \gamma'$,
    if we take $\alp = \alp'\gamma^3$ and $\beta = \beta'\gamma^2$, then
    \seq{ \rho\pr{\Ab\pr{\alp, \beta, \gamma, \eta}} < 1  . }
    More specifically, we can take $\gamma' = \sup_{\dr{\vo_i}_{i=1}^7\in\calP'}\min\dr{\pt_2, \pt_3, \pt_4, \pt_6, \pt_7, \frac{1}{\theC}}$, where $\theC$ is given in Lemma~\ref{def nR nC} and $\calP', \pt_i(i=2,3,4,6,7)$ are given in the proof.
\end{lemma}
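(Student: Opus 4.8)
The plan is to follow the proof of Lemma~\ref{lem:Arl1} almost verbatim. Rather than bound the spectral radius directly, I will construct a strictly positive vector $\vo = \pr{\vo_1,\dots,\vo_7}\tp$ satisfying $\Ab\pr{\alp,\beta,\gamma,\eta}\vo < \vo$ entrywise once the substitutions $\alp = \alp'\gamma^3$ and $\beta = \beta'\gamma^2$ are made. Since $\Ab\pr{\alp,\beta,\gamma,\eta}$ is a nonnegative matrix, \cite[Corollary~8.1.29]{horn2012matrix} then yields $\rho\pr{\Ab\pr{\alp,\beta,\gamma,\eta}} < 1$. Writing $g_i\pr{\gamma}$ for the $i$-th component of $\Ab\vo - \vo$ viewed as a function of $\gamma$, the whole task reduces to choosing $\vo$ so that every $g_i\pr{\gamma} < 0$ for all sufficiently small $\gamma$, and then reading off the admissible upper bound on $\gamma$ from the minimal positive roots of the $g_i$.

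The construction I would use keeps $\vo_1,\dots,\vo_5$ as fixed positive constants, but lets the two auxiliary coordinates scale, namely $\vo_6 = m_6\gamma^3$ and $\vo_7 = m_7\gamma^3$ with $m_6,m_7 > 0$. On $\vo_1,\dots,\vo_5$ I would impose two $\gamma$-free inequalities — one of the form $d_2\vo_2 + d_3\vo_3 + d_4 m_6/\alp' < d_1\vo_1$ and one of the form $c_{10}\vo_1 + c_9\vo_2 + 3\vo_3 < \vo_5$, directly analogous to (\ref{eq:cvr1}) and (\ref{eq:cvr2}) — so that $g_1\pr{\gamma}$ and $g_5\pr{\gamma}$ are negative for \emph{every} $\gamma$; this is exactly why no thresholds $\pt_1,\pt_5$ appear in the statement. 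For the remaining rows I would expand each $g_i\pr{\gamma}$ as a polynomial (or rational function) in $\gamma$, identify the dominant negative term as $\gamma\to 0$ — $-\theR\beta'\vo_2\gamma^2$ for $g_2$, $-\theC\vo_3\gamma$ for $g_3$, the negative constant $\pr{\Ab_{44}-1}|_{\gamma=0}\vo_4$ for $g_4$, $-m_6\gamma^3$ for $g_6$, and $\pr{m_6 - m_7}\gamma^3$ for $g_7$ — and define $\pt_i$ as the minimal positive root below which $g_i\pr{\gamma} < 0$. The extra requirement $\gamma < 1/\theC$ keeps $1 - \theC\gamma > 0$ in row $3$, and the condition $m_6 < m_7$ keeps $\pt_7$ positive. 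Collecting all these inequalities into the set $\calP'$ and taking $\gamma' = \sup_{\dr{\vo_i}_{i=1}^7\in\calP'}\min\dr{\pt_2,\pt_3,\pt_4,\pt_6,\pt_7,1/\theC}$ then finishes the argument.

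The hard part is the scaling of the auxiliary coordinates. Unlike the CPP matrix of Lemma~\ref{lem:Arl1}, the broadcast-induced stochastic matrices create couplings that do \emph{not} vanish with $\gamma$ under a naive constant ansatz: the entry $d_4$ in position $\pr{1,6}$, and the entries $d_6/\gamma$ and $2/\eta$ in column $7$. Choosing $\vo_6,\vo_7 = \Theta\pr{\gamma^3}$ is precisely what demotes $d_4\vo_6$ to the same order as the intended $O\pr{\gamma^3}$ term of $g_1$ (so it can be absorbed into the constraint on $\vo_1$), and demotes $\pr{d_6/\gamma}\vo_7$ and $\pr{2/\eta}\vo_7$ to strictly higher order in $g_3$ and $g_4$, letting the leading negative terms there survive. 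The other delicate point is $g_4$: I must invoke the hypothesis (\ref{eq:bpaxeta1}) to guarantee $\pr{\Ab_{44}-1}|_{\gamma=0} = -\eta\br{1 - \eta\pr{n-1}^2/\pr{1-\eta} - 2\cerr{2}n\eta} < 0$, i.e.\ that the momentum row is genuinely contractive already at $\gamma = 0$; without this the leading term of $g_4$ fails to be negative and the whole scheme collapses. Once these two structural points are in place, writing out the explicit forms of $\pt_2,\dots,\pt_7$ and checking that $\calP'$ is nonempty is a routine computation.
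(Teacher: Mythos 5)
Your proposal is correct and follows essentially the same route as the paper's proof: the same $\gamma^3$-scaling of the sixth and seventh coordinates (the paper's $\ve = \pr{\vo_1,\dots,\vo_5,\gamma^3\vo_6,\gamma^3\vo_7}\tp$), the same three $\gamma$-free constraints (\ref{eq:bpax1})--(\ref{eq:bpax3}) on the weights, the same identification of leading negative terms and minimal positive roots $\pt_i$ with the side condition $\gamma < 1/\theC$, the same use of (\ref{eq:bpaxeta1}) to make row~4 contractive at $\gamma = 0$, and the same appeal to \cite[Corollary~8.1.29]{horn2012matrix}. Nothing essential is missing.
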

\begin{proof}
    Given positive numbers $\dr{\vo_i}_{i=1}^7$ satisfying
    \begin{subequations}
      \small
      \begin{numcases}{}
        d_2\vo_2\alp' + d_3\vo_3\alp' + d_4\vo_6 < d_1\vo_1\alp' , \label{eq:bpax1}  \\
        c_{10}\vo_1 + c_9\vo_2 + 3\vo_3 < \vo_5, \label{eq:bpax2} \\
        \vo_6 < \vo_7  \label{eq:bpax3} .
      \end{numcases}
    \end{subequations}
    Define
    $
        \ve = \pr{\vo_1, \vo_2, \vo_3, \vo_4, \vo_5, \gamma^3\vo_6, \gamma^3\vo_7  }\tp  .
    $
    The seven entries $\dr{g_i}_{i=1}^7$ of the vector $\br{\ve - \Ab\pr{\alp'\gamma^3, \beta'\gamma^2, \gamma, \eta}\ve}$ (as functions of $\gamma$) are given by
    \begin{equation*}
      \small
      \begin{split}
        g_1\pr{\gamma} &= \gamma^3 \pr{- d_1\vo_1\alp' + d_2\vo_2\alp' + d_3\vo_3\alp' + d_4\vo_6}   \\
        g_2\pr{\gamma} &= \gamma^2 \pr{ -\theR\beta'\vo_2 + \frac{\alp'^2\vo_5}{\beta'\theR}\gamma^2 + \traRt^2\vo_6\gamma},  \\
        g_3\pr{\gamma} &= \gamma \pr{-\theC\vo_3 + \frac{d_5\vo_5\gamma}{1 - \theC\gamma} + d_7\vo_5\gamma + d_6\vo_7\gamma}, \\
        g_4\pr{\gamma} &= \pr{- \eta + \frac{\eta^2(n-1)^2}{1 - \eta} + 2\cerr{2}n\eta^2}\vo_4 + d_8\vo_4\frac{\beta'^2}{\eta}\gamma^4  \notag  \\
        &\qquad + d_8\vo_4\beta'^2\gamma^4 + \frac{2\vo_7}{\eta}\gamma^3,  \\
        g_5\pr{\gamma} &= c_{10}\vo_1 + c_9\vo_2 + 3\vo_3 - \vo_5,  \\
        g_6\pr{\gamma} &= \gamma^3 \Big( 3\Vara\vo_2\beta'^2\gamma + d_{9}\vo_4\beta'^2\gamma
          + 3\Vard\vo_5\alp'^2\gamma^3-\vo_6 \Big), \\
        g_7\pr{\gamma} &= \gamma^3 \pr{ \pr{\vo_6 - \vo_7} + 18\vo_2\beta'^2\gamma + 2\vo_5\alp'^2\gamma^3 }.
      \end{split}
    \end{equation*}
    Denote $\gtil_i = g_i/\gamma^3(i = 1, 6, 7)$, $\gtil_2 = g_2/\gamma^2$, $\gtil_3 = g_3/\gamma$.
    By (\ref{eq:bpax1}), $\gtil_1 = - d_1\vo_1\alp' + d_2\vo_2\alp' + d_3\vo_3\alp' + d_4\vo_6 < 0. $
    By (\ref{eq:bpax2}), $g_5 = c_{10}\vo_1 + c_9\vo_2 + 3\vo_3 - \vo_5 < 0. $
    By (\ref{eq:bpax3}), $\gtil_7(0) = \vo_6 - \vo_7 < 0 .  $
    By (\ref{eq:bpaxeta1}), $\eta \leq 1/2$, then $g_4(0) \leq - \eta + 2\eta^2(n-1)^2 + 2\cerr{2}n\eta^2 < 0 .  $
    It follows directly that $\gtil_2(0) = -\theR\beta'\vo_2 < 0, $
    $\gtil_3(0) = -\theC\vo_3 < 0,  $
    $\gtil_6(0) = -\vo_6 < 0 . $

    If $0 < \gamma < \frac{1}{\theC}$, we have $\lim_{\gamma\arr+\infty}g_3(\gamma) = +\infty$.
    Combining with the facts that $\lim_{\gamma\arr+\infty}g_i(\gamma) = +\infty$, for $i=2,4,6,7$,
    we can define $\pt_i$ to be the minimum positive solution of $g_i(\gamma)$ $(i=2,3,4,6,7)$ when $0 < \gamma < \frac{1}{\theC}$.

    Thus, for any $0 < \gamma < \min\dr{\pt_2, \pt_3, \pt_4, \pt_6, \pt_7, \frac{1}{\theC}},$
    there holds $g_i(\gamma) < 0,$
    for any $1\leq i\leq 7,$
    i.e.,
    $\Ab\pr{\alp, \beta, \gamma, \eta}\ve = \Ab\pr{\alp'\gamma^3, \beta'\gamma^2, \gamma, \eta}\ve < \ve . $
    Since $\Ab\pr{\alp, \beta, \gamma, \eta}$ is a nonnegative matrix, $\ve$ is a positive vector,
    by \cite[Corollary~8.1.29]{horn2012matrix}, we have $\rho\pr{\Ab\pr{\alp, \beta, \gamma, \eta}} < 1. $

    The above arguments hold true for arbitrary $\dr{\vo_i}_{i=1}^7$ satisfying (\ref{eq:bpax1}), (\ref{eq:bpax2}) and (\ref{eq:bpax3}).
    Then, let the set $\calP'$ consist of all these $\dr{\vo_i}_{i=1}^7$, we have
    $\gamma' \geq \sup_{\dr{\vo_i}_{i=1}^7\in\calP'}\min\dr{\pt_2, \pt_3, \pt_4, \pt_6, \pt_7, \frac{1}{\theC}}. $
%
%
%
\end{proof}

The following theorem shows the linear convergence of Algorithm~\ref{alg: BC-Push-Push} given proper parameters and stepsize.
\begin{theorem}\label{thm:blc}
    Under Assumptions~\ref{assp: require each fi}-\ref{assp: compress},
    if we choose arbitrary positive numbers $\alp', \beta', \eta$
    satisfying $0 < \eta \leq \frac{1}{2}$, $\eta < \frac{1}{2(n-1)^2 + 2\cerr{2}n}, $
    and choose $\gamma' > 0$ as in Lemma~\ref{lem:bxpa1}.
    Then, for any $\gamma$ satisfying $0 < \gamma < \gamma'$ and
    \seql{\label{eq:bgaar1}}{ \gamma \leq \min\dr{1, \beta'^{-{1\over2}}, \pr{\frac{2n}{\alp'\pr{\mu+L}\pr{\vecr\tp\AW\vecc}}}^{1\over3}},  }
    $\beta = \beta'\gamma^2$, and $\alp = \alp'\gamma^3$,
    there holds, for any $k \geq 0$,
    \seq{
        \expec{\nt{\xa^k - \xx^*}^2} &\leq \cb\vb_1 \rho\pr{\Ab\pr{\alp, \beta, \gamma, \eta}}^k,  \\
        \expec{\mR{\PR\XX^k}^2} &\leq \cb\vb_2 \rho\pr{\Ab\pr{\alp, \beta, \gamma, \eta}}^k  ,
    }
    where $\cb, \vb_1, \vb_2$ are constants given in the proof.
\end{theorem}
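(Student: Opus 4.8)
The plan is to reduce Theorem~\ref{thm:blc} to the scalar Perron--Frobenius argument already used for CPP in Theorem~\ref{thm:linconv}, the only new feature being that the coupled system now carries seven coordinates instead of five. Writing $\Ab = \Ab\pr{\alp, \beta, \gamma, \eta}$, the four inequalities (\ref{eq:bYkAb1})--(\ref{eq:bdk+1Ab1}) of Lemma~\ref{lem:bmatineq1} together encode a single closed recursion on the vector $\db^k$, while Lemma~\ref{lem:bxpa1} supplies the crucial contraction $\rho\pr{\Ab} < 1$. The proof is then a nested induction propagating a geometric bound $\db^k \leq \cb\,\rho\pr{\Ab}^k\vb$ across the coordinates, with $\vb$ a strictly positive right eigenvector of $\Ab$ at the eigenvalue $\rho\pr{\Ab}$.

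First I would discharge the hypotheses of the two lemmas from the stepsize rule. The window $0 < \gamma < \gamma'$ is exactly what Lemma~\ref{lem:bxpa1} needs for $\rho\pr{\Ab} < 1$, while the extra constraint (\ref{eq:bgaar1}) forces $\gamma \leq 1$, hence $\beta = \beta'\gamma^2 \leq 1$ and $\alp = \alp'\gamma^3$, and, through its bound involving $\pr{\vecr\tp\AW\vecc}$, guarantees $\alb \leq \frac{2}{\mu+L}$, which is precisely the condition under which Lemma~\ref{lem:bmatineq1} is valid. With $\rho\pr{\Ab} < 1$ secured, I would check that $\Ab$ is a nonnegative irreducible matrix by inspecting its sparsity pattern in (\ref{eq:bdefA}): its nonzero entries link the seven coordinates into a strongly connected dependency graph, so the Perron--Frobenius theorem supplies a positive right eigenvector $\vb$ with $\Ab\vb = \rho\pr{\Ab}\vb$ (in fact $\Ab$ is primitive, since coordinates $1$--$4$ have positive diagonal entries). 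I then set $\cb = \max_{1\leq i\leq 4}\db^0_i/\vb_i$, so the base case $\db^0_{1:4} \leq \cb\,\vb_{1:4}$ holds.

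The core is an induction with hypothesis $\db^k_{1:4} \leq \cb\,\rho\pr{\Ab}^k\,\vb_{1:4}$. The decisive structural fact is the ``staircase'' of zeros in rows $5$, $6$, $7$ of $\Ab$: row $5$ vanishes in columns $5$--$7$, row $6$ in columns $6$--$7$, and row $7$ in column $7$. This lets me peel off the three auxiliary coordinates one at a time in the order $5,6,7$. Using (\ref{eq:bYkAb1}) and the hypothesis, $\expec{\mt{\YY^k}^2} \leq \Ab_{5,1:4}\db^k_{1:4} \leq \cb\,\rho\pr{\Ab}^k\Ab_{5,:}\vb = \cb\,\rho\pr{\Ab}^{k+1}\vb_5 \leq \cb\,\rho\pr{\Ab}^k\vb_5$, which upgrades the hypothesis to $\db^k_{1:5} \leq \cb\,\rho\pr{\Ab}^k\vb_{1:5}$; applying (\ref{eq:brokAb1}) and then (\ref{eq:bXk+1kAb1}) in the same fashion extends this to all seven coordinates, $\db^k \leq \cb\,\rho\pr{\Ab}^k\vb$. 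Feeding this into (\ref{eq:bdk+1Ab1}) gives $\db^{k+1}_{1:4} \leq \Ab_{1:4,:}\db^k \leq \cb\,\rho\pr{\Ab}^{k+1}\vb_{1:4}$, closing the induction. Reading off coordinates $1$ and $2$ of $\db^k$ yields the two asserted bounds, with $\vb_1,\vb_2$ the corresponding eigenvector entries.

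The genuine difficulty is confined to the two lemmas, which are proved in the supplement; Lemma~\ref{lem:bmatineq1} in particular must absorb the extra stochasticity introduced by the broadcast matrices $\RP^k, \RU^k, \IW^k, \AW^k, \RW^k$, which is exactly why the auxiliary terms $\expec{\mt{\ro^k}^2}$ and $\expec{\mt{\XX^{k+1}-\XX^k}^2}$ had to be appended to close the recursion. Within the theorem proper, the only points demanding care are verifying that the simultaneous scalings $\alp \sim \gamma^3$, $\beta \sim \gamma^2$ satisfy the hypotheses of \emph{both} lemmas at once, and confirming the strong connectivity of the dependency graph of $\Ab$ so that the Perron eigenvector is strictly positive. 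Once these are in hand, the nested peeling of the auxiliary coordinates is routine and identical in spirit to the CPP argument of Theorem~\ref{thm:linconv}.
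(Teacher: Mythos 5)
Your proposal is correct and follows essentially the same route as the paper's proof: the same Perron--Frobenius setup with a positive right eigenvector $\vb$ of the regular nonnegative matrix $\Ab$, the same constant $\cb = \max_{1\leq i\leq 4}\db^0_i/\vb_i$, and the same induction that peels off the auxiliary coordinates in the order $5,6,7$ (exploiting exactly the staircase of zeros you identify) before closing with (\ref{eq:bdk+1Ab1}). Your explicit verification of strong connectivity of the dependency graph of $\Ab$ is a slightly more careful justification of the Perron--Frobenius step than the paper's bare assertion of regularity, but the argument is otherwise identical.
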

\begin{proof}
    To begin with,
    we denote $\Ab = \Ab\pr{\alp, \beta, \gamma, \eta}$ for simplicity.
    Since $\Ab$ is a regular nonnegative matrix, by the Perron-Frobenius theorem~\cite{berman1994nonnegative}, $\rho\pr{\Ab}$ is an eigenvalue of $\Ab$, and $\Ab$ has a unique positive right eigenvector $\vb$ with respect to the eigenvalue $\rho\pr{\Ab}$.
    Define
    $
        \cb =
        \max_{1\leq i\leq 4} \frac{\db^0_i}{\vb_i} .
    $
    Then, $\db^{0}_{1:4} \leq \cb\vb_{1:4} . $

    Next, we prove the linear convergence by induction.
    If we have proved $\db^{k}_{1:4} \leq \cb\rho\pr{\Ab}^k\vb_{1:4} $, we will show that it also holds for $k+1$.
    The requirement (\ref{eq:bgaar1}) guarantees that $\gamma \leq 1, $ $\beta \leq 1, $ $\alb \leq \frac{2}{\mu + L}. $
    From Lemma~\ref{lem:bxpa1}, $\rho\pr{\Ab} < 1.$

    By (\ref{eq:bYkAb1}) and the inductive hypothesis, there holds
    \seq{&\expec{\mt{\YY^k}^2} \leq \Ab_{5,1:4}\db^{k}_{1:4} \leq \cb\rho\pr{\Ab}^k\Ab_{5,1:4}\vb_{1:4} \\
    =&
    \cb\rho\pr{\Ab}^k\br{\Ab\vb}_5 =
    \cb\rho\pr{\Ab}^{k+1}\vb_5 \leq \cb\rho\pr{\Ab}^k\vb_5  .    }
    Then combining with the inductive hypothesis, we have
    \seql{\label{eq:bdb51}}{\db^{k}_{1:5} \leq \cb\rho\pr{\Ab}^k\vb_{1:5} . }

    By (\ref{eq:brokAb1}) and~\eqref{eq:bdb51},
    \seq{&\expec{\mt{\ro^k}^2} \leq \Ab_{6,1:5}\db^{k}_{1:5} \leq \cb\rho\pr{\Ab}^k\Ab_{6,1:5}\vb_{1:5} \\
    =& \cb\rho\pr{\Ab}^k\br{\Ab\vb}_6 = \cb\rho\pr{\Ab}^{k+1}\vb_6 \leq \cb\rho\pr{\Ab}^k\vb_6  .    }
    Then combining with (\ref{eq:bdb51}), we have
    \seql{\label{eq:bdb61}}{\db^{k}_{1:6} \leq \cb\rho\pr{\Ab}^k\vb_{1:6} . }

    By (\ref{eq:bXk+1kAb1}) and~\eqref{eq:bdb61},
    \seq{&\expec{\mt{\XX^{k+1} - \XX^k}^2} \leq \Ab_{7,1:6}\db^{k}_{1:6} \leq \cb\rho\pr{\Ab}^k\Ab_{7,1:6}\vb_{1:6} \\
     =& \cb\rho\pr{\Ab}^k\br{\Ab\vb}_7 = \cb\rho\pr{\Ab}^{k+1}\vb_7 \leq \cb\rho\pr{\Ab}^k\vb_7  .    }
    Then combining with (\ref{eq:bdb61}), we have obtained
    \seql{\label{eq:bdbfull1}}{\db^{k} \leq \cb\rho\pr{\Ab}^k\vb . }

    Using (\ref{eq:bdk+1Ab1}) and (\ref{eq:bdbfull1}),
    we have
    \seq{
        &\db^{k+1}_{1:4} \leq \Ab_{1:4, :}\db^k \leq \cb\rho\pr{\Ab}^k \Ab_{1:4, :}\vb \\
        =& \cb\rho\pr{\Ab}^k \br{\Ab\vb}_{1:4} = \cb\rho\pr{\Ab}^{k+1}\vb_{1:4}  ,
    }
    i.e., the statement holds for $k + 1$.
    Therefore, by induction,
    $
        \db^{k}_{1:4} \leq \cb\rho\pr{\Ab}^k\vb_{1:4} ,
    $
    for any $k \geq 0$, which completes the proof.
\end{proof}

\begin{remark}
    In practice, the parameters $\beta, \gamma, \eta$ and stepsize $\alp$ can be chosen in the same way as in CPP.
\end{remark}

\section{Numerical Experiments}
\label{sec:numerical}
{
In this section, we compare the numerical performance of CPP and B-CPP with the Push-Pull/$\calA\calB$ method~\cite{pu2020push, xin2018linear}.
In the experiments, we equip CPP and B-CPP with different compression operators and consider different graph topologies.

We consider the following decentralized $\ell_2$-regularized logistic regression problem:
\seq{
   \min_{\xx\in \Real^p}  \frac{1}{n} \sum_{i=1}^{n}  \log\pr{1 + e^{-\la_{i}\zz_{i}\tp\xx}} + \frac{\mu}{2}\nt{\xx}^2 ,
}
where $\pr{\zz_{i}, \la_{i}}\in \Real^{p}\times\dr{-1, +1}$ is the  training example possessed by agent $i$.
The data is from QSAR biodegradation Data Set\footnote{\url{https://archive.ics.uci.edu/ml/datasets/QSAR+biodegradation}} ~\cite{mansouri2013quantitative}, where each feature vector is of $p = 41$ dimension.
In our experiments, we set $n = 20$ and $\mu = 0.001$.
We construct the directed graphs $\calG_{\RR}$ and $\calG_{\CC} $ by adding $d$ directed links to the undirected cycle, respectively.
In our experiments below, we will consider the cases $d = 5, 20, 50$, representing sparse, normal and dense graphs respectively.

We consider two kinds of compression operators.
The first one is the $b$ bit 2-quantization given by
\seq{
  \compress\pr{\xx} = \pr{\nt{\xx}\sign{\xx}2^{1-b  }} \floor{\frac{2^{b-1}\abs{\xx}}{\nt{\xx}} + \vv} ,
}
where $\vv \in \Real^p$ is a random vector picked uniformly from $(0,1)^p$ and we choose $b = 2, 4, 6$ in the experiments.
The second one is Rand-k which is defined by
\eq{
    \compress\pr{\xx} = \frac{p}{k} \widehat{\xx},
}
where $\widehat{\xx}$ is obtained by randomly choosing $k$ entries of $\xx$ and setting the rest entries to be $0$.
We will set $k = 5, 10, 20$ respectively.
Then,
we hand-tune to find the optimal parameters for all the numerical cases below.
In the figures, the $y$-axis represents the loss $f\pr{\xa^i} - f\pr{\xx^*}$ and $x$-axis is the iteration number or the number of transmitted bits.


\subsection{Linear convergence}
\label{subsec: numeric_linear}
The performance of Push-Pull/$\calA\calB$, CPP and B-CPP is illustrated in Fig.~\ref{fig:fig1}.
To see the performance of B-CPP more clearly, we plot the trajectories of B-CPP additionally at a larger scale in Fig.~\ref{fig:BCPP}.
The experiments illustrated in Fig.~\ref{fig:fig1} and Fig.~\ref{fig:BCPP} are conducted on graphs with $d = 20$.
The quantization with $b = 2, 4, 6$ and Rand-k with $k = 5, 10, 20$ are considered.
\begin{figure}[!ht]
    \centering
    \includegraphics[scale=0.39]{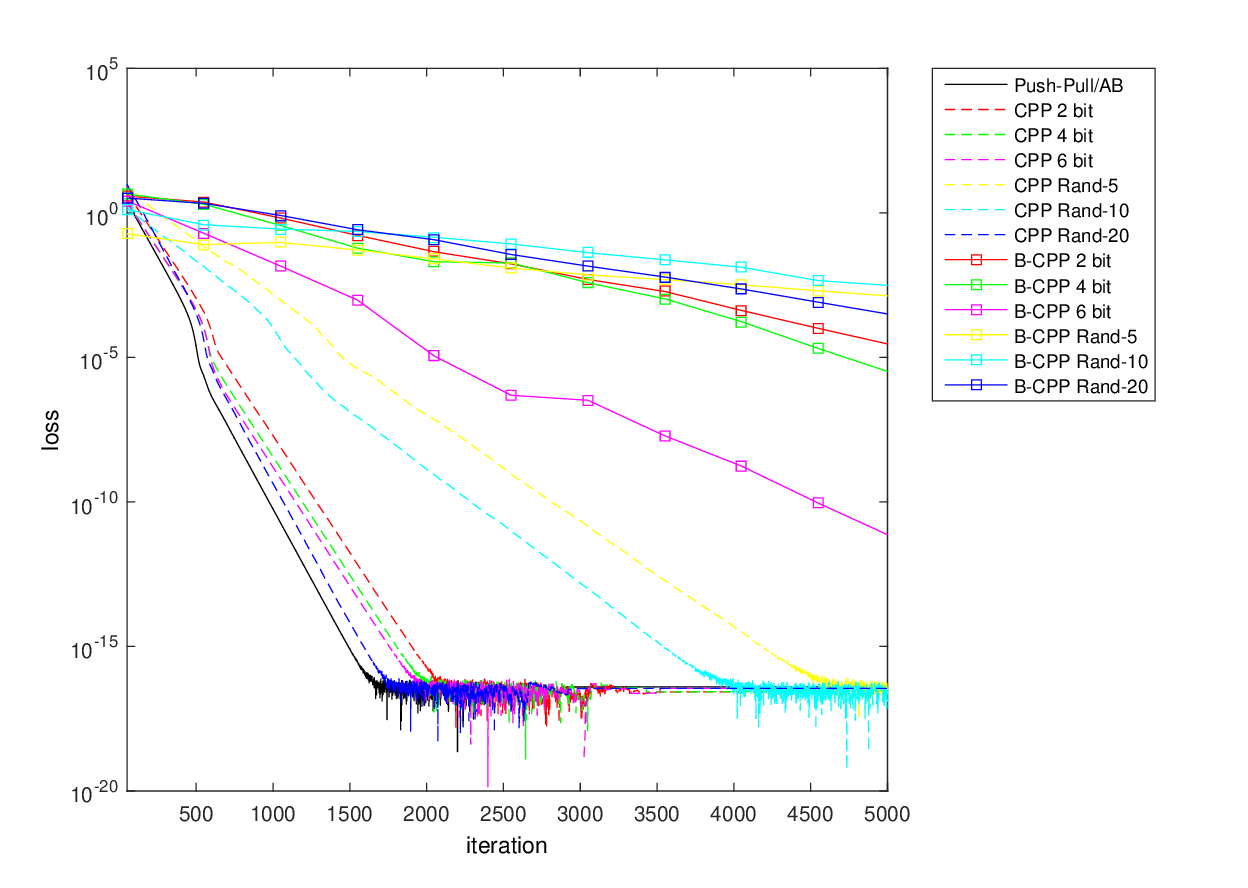}
    \caption{Linear convergence of Push-Pull/$\calA\calB$, CPP, and B-CPP with $b$ bit quantization ($b = 2, 4, 6$) and Rand-k ($k = 5, 10, 20$) compressors. }
    \label{fig:fig1}

\end{figure}
\begin{figure}[!ht]
    \centering
    \includegraphics[scale=0.4]{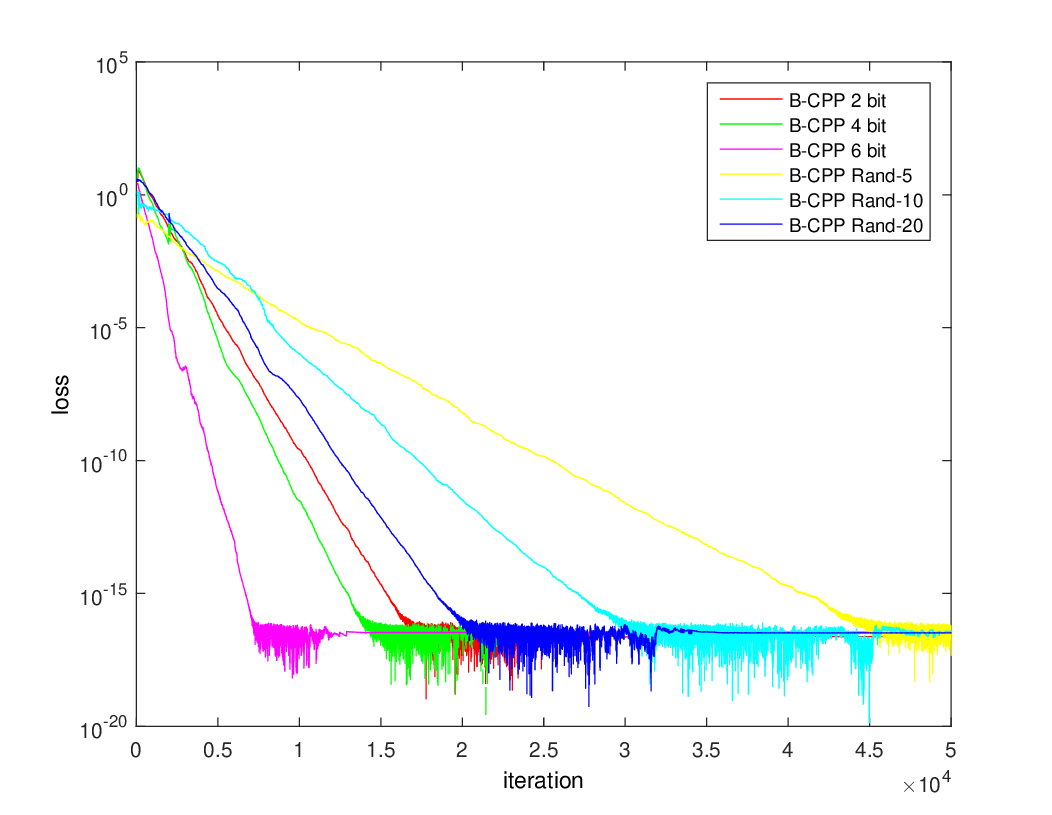}
    \caption{Linear convergence of B-CPP with $b$ bit quantization ($b = 2, 4, 6$) and Rand-k ($k = 5, 10, 20$) compressors. }
    \label{fig:BCPP}

\end{figure}
It can be seen from Fig.~\ref{fig:fig1} and Fig.~\ref{fig:BCPP} that all the trajectories exhibit linear convergence, and the exact Push-Pull/$\calA\calB$ method is faster than CPP and B-CPP with any compression methods.
This is reasonable as the compression operator induces additional errors compared to the exact method, and these additional errors could slow down the convergence.
Meanwhile, as the values of $b$ or $k$ increases, both CPP and B-CPP speed up since the compression errors decrease.
When $b = 6$ or $k = 20$, the trajectories of CPP are very close to that of exact Push-Pull/$\calA\calB$, which indicates that when the compression errors are small, they are no longer the bottleneck of convergence.

Within the same number of iterations, CPP outperforms B-CPP,
and the trajectories of CPP are smoother than B-CPP.
These results can be expected since CPP updates all the local variables in a single iteration, while in B-CPP, only one node updates with its neighbors.
To guarantee the linear convergence, B-CPP requires smaller parameters and stepsizes.
In addition, the mixing matrix at each iteration of B-CPP can be regarded as a stochastic matrix, which will induce additional variances.

\subsection{Communication efficiency}
When we compare the number of transmitted bits to reach certain levels of accuracy, CPP and B-CPP show their superiority. We consider the same settings as in Section \ref{subsec: numeric_linear}.
We can see from all of the sub-figures of Fig.~\ref{fig:comm} that, to reach a high accuracy within about $10^{-15}$, the number of transmitted bits required by these methods have the ranking: B-CPP $<$ CPP $<$ Push-Pull/$\calA\calB$.

\begin{figure}[!ht]
    \centering
    \includegraphics[scale=0.6]{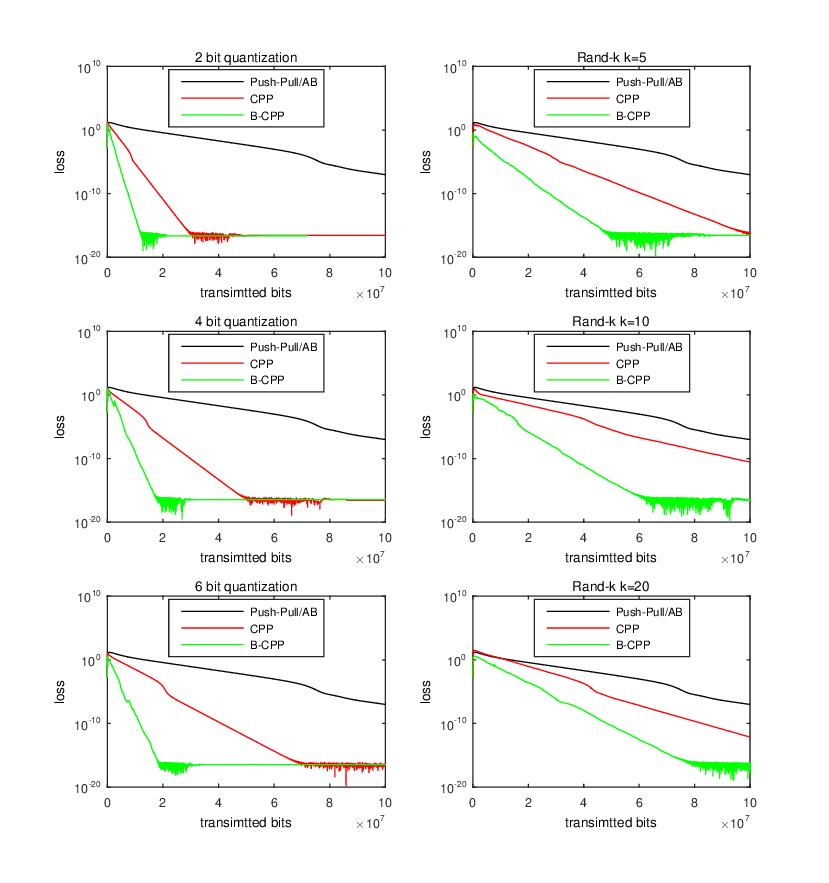}
    \caption{Performance of Push-Pull/$\calA\calB$, CPP, B-CPP against the number of transmitted bits: the left column shows the results with quantization ($b = 2, 4, 6$) and the right column shows the results with Rand-k ($k = 5, 10, 20$). }
    \label{fig:comm}

\end{figure}

To see why CPP outperforms Push-Pull/$\calA\calB$, note that  the vectors sent in CPP have been compressed, and hence the transmitted bits at each iteration are greatly reduced compared to Push-Pull/$\calA\calB$.
Although the additional compression errors slow down the convergence,
our design for CPP guarantees that the impact on the convergence rate is relatively small. Therefore, CPP is much more communication-efficient than the exact Push-Pull/$\calA\calB$ method.
Moreover, in all cases, B-CPP is much more communication-efficient than CPP.
This is because when CPP converges, each agent will receive similar, or ``overlapping" vectors from different neighbors,
which impacts the communication-efficiency.
By contrast, for B-CPP, only one agent wakes up in each iteration, and its information can be utilized by its neighbors more efficiently.

It is worth noting that for both CPP and B-CPP, the choices $b = 2$ for quantization or $k = 5$ for Rand-k are more communication-efficient than $b = 4, 6$ or $k = 10, 20$.
This indicates that as the compression accuracy becomes smaller, its impact exhibits ``marginal effects”.
In other words, when the compression errors are not the bottleneck for the convergence, sacrificing the communication costs for faster convergence will reduce the communication efficiency.

\subsection{Different topologies}
We also examine the performance of CPP and B-CPP on communication networks with different levels of connectivity.

\begin{figure}[!ht]
    \centering
    \includegraphics[scale=0.35]{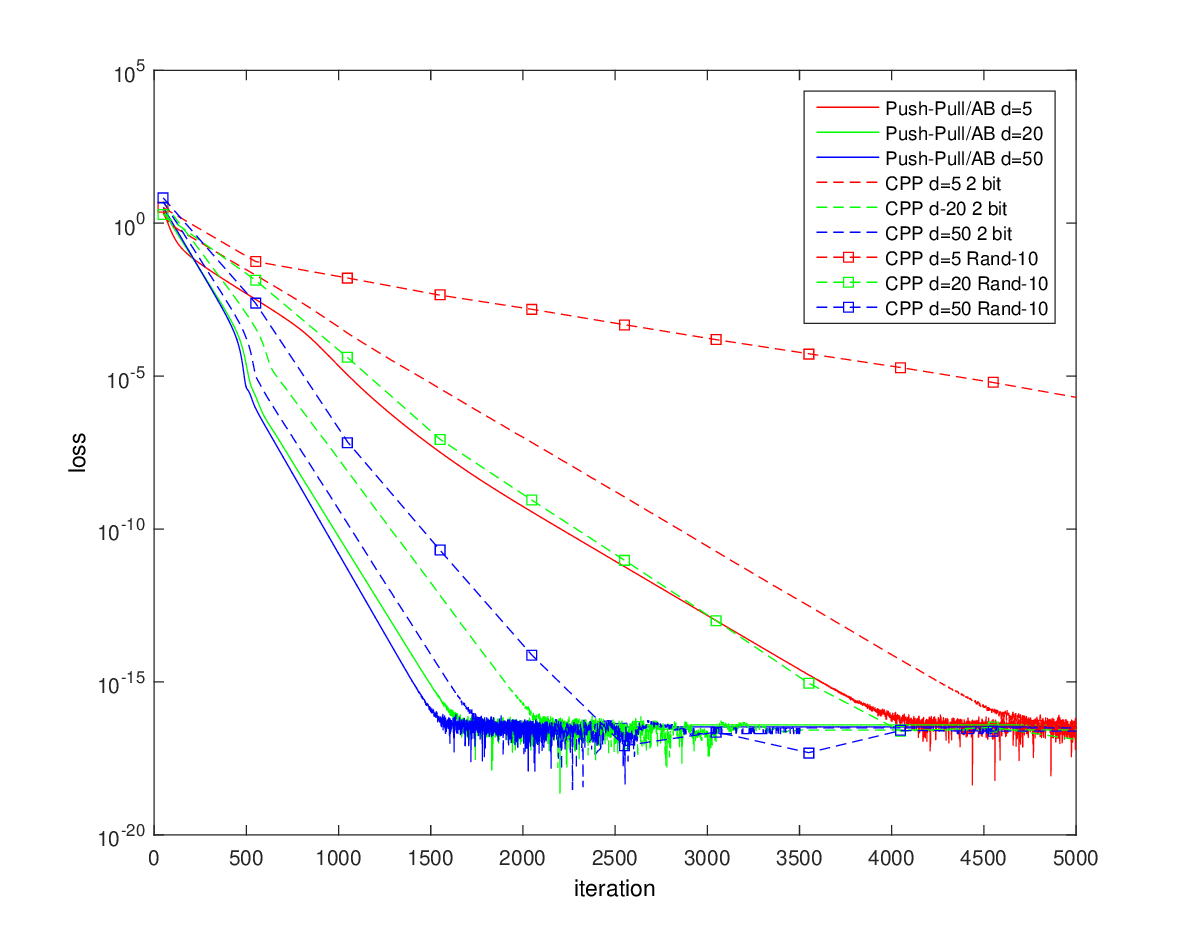}
    \caption{Performance of CPP and Push-Pull/$\calA\calB$ with different communication networks under both quantization and Rand-k compressors. 
     }
    \label{fig:CPPg}

\end{figure}

\begin{figure}[!hb]
    \centering
    \includegraphics[scale=0.41]{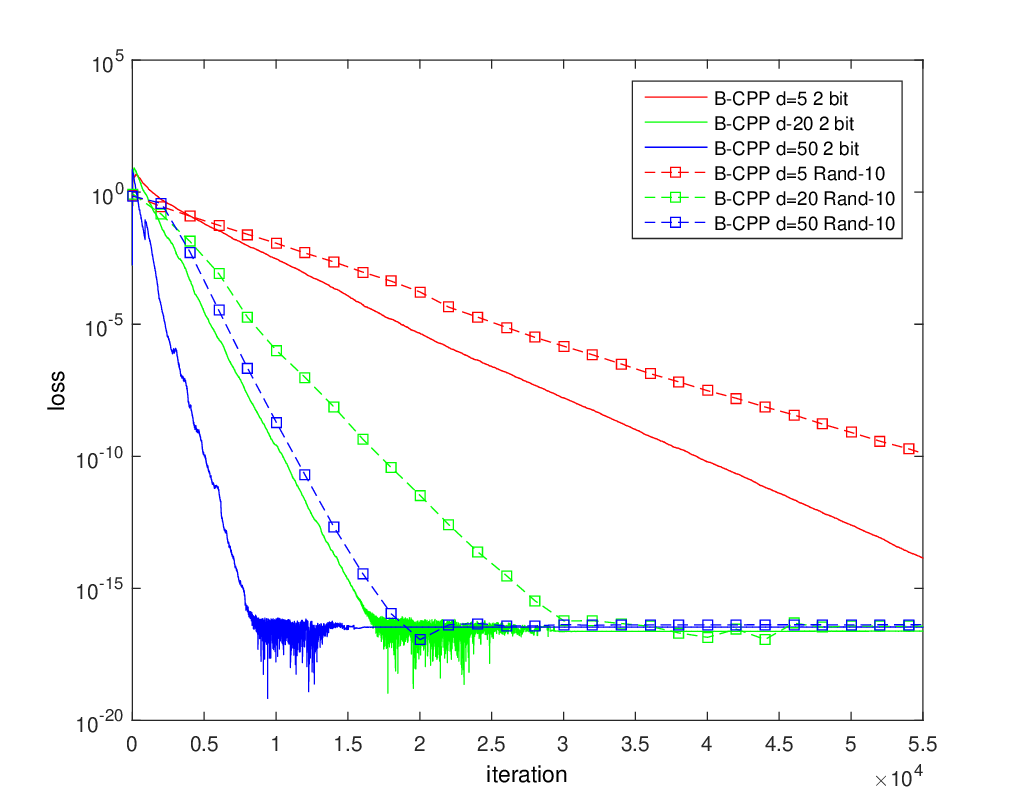}
    \caption{Performance of B-CPP with different communication networks under both quantization and Rand-k compressors. }
    \label{fig:BCPPg}

\end{figure}

We consider $d = 5, 10, 20$ respectively:
as the value of $d$ increases, the communication network has a better connectivity.
As before, both the quantization and the Rand-k compressors are considered, and we show the performance of CPP and B-CPP separately for better clarity.

In Fig.~\ref{fig:CPPg} and Fig.~\ref{fig:BCPPg}, we can see that when $d$ increases, the convergence of both CPP/B-CPP and Push-Pull/$\calA\calB$ speed up.
This is expected since better connectivity implies less consensus errors in each iteration, and the algorithms perform closer to the centralized gradient descent algorithm which is faster.







}

\section{Conclusions}
\label{sec:conclusions}

In this paper, we proposed two communication-efficient algorithms for decentralized optimization over a multi-agent network with general directed topology. First, we consider a novel communication-efficient gradient tracking based method, termed CPP, that combines the Push-Pull method with communication compression. CPP can be applied to a general class of unbiased compression operators and achieves linear convergence for strongly convex and smooth objective functions.
Second, we consider a broadcast-like version of CPP (B-CPP) which also achieves linear convergence rate for strongly convex and smooth objective functions. B-CPP can be applied in an asynchronous broadcast setting and further reduce communication costs compared to CPP.

\ifCLASSOPTIONcaptionsoff
  \newpage
\fi



%

\bibliographystyle{IEEEtran}
\bibliography{IEEEabrv,ref}

%

%
%
%





\newpage

\onecolumn
\def\tmptitle{Supplementary Material}
\etocsettocstyle{\Large\textbf{\tmptitle}\par\normalsize
}{\bigskip}
\etocdepthtag.toc{mtappendix}
\etocsettagdepth{mtchapter}{none}
\etocsettagdepth{mtappendix}{subsection}
\tableofcontents

\renewcommand*{\thesection}{\Alph{section}}
\setcounter{section}{19}
\section*{}
\subsection{Proof of Lemma~\ref{def nR nC}}\label{lem: proof of def nR nC}
We prove the existence of $\Rtil,\theR,\traRt$ for a given $\RR$, and the existence of $\Ctil,\theC,\traCt$ can be shown similarly.

Under Assumption~\ref{asp: R and C}, all eigenvalues of $\RR$ lie in the unit circle, and $1$ is the only eigenvalue with modulus one. In addition, the multiplicity of eigenvalue $1$ is one.
Denote all eigenvalues of $\RR$ by $z_1 = 1, z_2, \cdots, z_n$.
Then, $\abs{z_k} < 1$ for all $2\leq k\leq n$.

There is an invertible matrix $\DD$ such that $\DD\inv\RR\DD = \JJ$, where $\JJ$ is the Jordan form of $\RR$.
The columns of $\DD$ are right eigenvectors or generalized right eigenvectors of $\DD$.
We can rearrange the columns of $\DD$ such that the unique $1$-by-$1$ Jordan block of eigenvalue $1$ is on the left-top of $\JJ$, then, the first column of $\DD$ is parallel to the all-ones vector $\one$.

{We denote $\uu = \vecr$ in this proof for simplicity. }
As eigenvalue $1$ has multiplicity one in the spectral of $\RR $ and $\uu$ is the left eigenvalue with respect to $1$, all other columns of $\DD$ except the first column are orthogonal to $\uu$.
Thus, $\frac{1}{n}\pr{\one\uu\tp}\DD = \DD\Diag{1, 0, \cdots, 0}$, i.e., $\DD\inv\frac{\one\uu\tp}{n}\DD = \Diag{1, 0, \cdots, 0}$.

Decompose the Jordan form as $\JJ = \XX + \YY$, where $\XX = \Diag{1, z_2, \cdots, z_n}$ contains the diagonal of $\JJ$ and $\YY$ contains the superdiagonal.
Define $\VV_t = \Diag{1, t, \cdots, t^n}$ for $t > 0$.
Then, $\VV_t\inv \JJ \VV_t = \XX + t\YY$.

If $\YY \neq \zero$, take $t \leq \min_{2\leq k\leq n} \frac{1 - \abs{z_k}}{2\nt{\YY}} $; if $\YY =
\zero$, take $t = 1$.
By letting $\theR = \min_{2\leq k\leq n} \pr{1 - \abs{z_k}}/2$, we have
\seq{
     &\nt{\VV_t\inv\DD\inv\pr{\RR - \frac{1}{n}\one\uu\tp}\DD\VV_t}
    \leq   \nt{\Diag{0, z_2, \cdots, z_n}} + t \nt{\YY}
       = \max_{2\leq k\leq n} \abs{z_k} + t \nt{\YY}
       \leq 1 - \theR .
}
The equality $\nt{\VV_t\inv\DD\inv\pr{\II - \frac{1}{n}\one\uu\tp}\DD\VV_t} = 1 $ is straightforward.
Thus,
\seq{
  &\nt{\VV_t\inv\DD\inv\Rb\DD\VV_t}
  \leq \pr{1 - \beta} \nt{\VV_t\inv\DD\inv\pr{\II - \frac{1}{n}\one\uu\tp}\DD\VV_t} + \beta \nt{\VV_t\inv\DD\inv\pr{\RR - \frac{1}{n}\one\uu\tp}\DD\VV_t} \leq 1 - \theR \beta.
}

By the choice of $t$, we have $t\nt{\YY} < 1$,
then
\seq{&\nt{\VV_t\inv\DD\inv\RR\DD\VV_t} \leq \max_{1\leq k\leq n}\abs{z_k} + t\nt{\YY} \leq 1 + t\nt{\YY} \leq 2,}
and \seq{&\nt{\VV_t\inv\DD\inv\pr{\RR - \II}\DD\VV_t}
\leq \nt{\VV_t\inv\DD\inv\RR\DD\VV_t} + 1 \leq 3.}

The inequality (\ref{eq: nR Rb nC Cg}) as well as $\nR{\RR} \leq 2$, $\nR{\RR - \II} \leq 3$ and $\nt{\xx} \leq \nR{\xx}$ follows by letting $\Rtil =  \nt{\VV_t\inv\DD\inv}\inv \VV_t\inv\DD\inv$.
And $\mt{\AA} \leq \mR{\AA}$ follows by Definition~\ref{def: induced matrix norm n by p} and $\nt{\xx} \leq \nR{\xx}$.
By taking $\traRt = \nt{\Rtil}$,  we have $\nR{\xx} \leq \traRt \nt{\xx}$ for any vector $\xx\in \Real^n$.
\qed

\newcommand\uxa{\calZ}

\subsection{Proof of Lemma~\ref{lem:cpushpullmatineq}}\label{sec:proofofCpushpullmatineq1}
By Assumption~\ref{assp: compress},
\seql{\label{eq: bound for Ek}}{
        &\expec{\mt{\EX^k}^2 | \calF_k} \leq \cerr{2}\mt{\XX^k - \UU^k}^2,\ \expec{\mt{\EE^k}^2 | \calF^+_k} \leq \cerr{2} \mt{\YY^k - \VV^k}^2 .
}
Next, we provide bounds for the quantities $\expec{\nt{\xa^{k+1} - \xx^*}}, \expec{\mR{\PR\XX^k}^2}, \expec{\mC{\PC\YY^k}^2}$, $\expec{\mt{\UU^{k+1} - \XX^{k+1}}^2}$ and $\expec{\mt{\YY^k}^2}$ in turn.

Bounding $\expec{\nt{\xa^{k+1} - \xx^*}}$: \
we first analyze the terms in (\ref{eq: xa expansion}) one-by-one.
By Lemma 10 in~\cite{qu2017harnessing} and Assumption~\ref{assp: require each fi}, for $\altil \leq \frac{2}{\mu + L}$,
\seql{\label{eq: gradient norm small}}{
  \nt{\xa^k - \altil \gg^k - \xx^*} \leq \pr{1 - \altil \mu} \nt{\xa^k - \xx^*}.
}
By Assumption~\ref{assp: require each fi} and the fact $\mt{\cdot} \leq \mR{\cdot}$ we assumed in Lemma~\ref{def nR nC},
\seql{\label{eq: dd gg in xa expansion}}{
        &\nt{\gg^k - \ya^k} = \frac{1}{n} \nt{\one\tp\na\FF\pr{\one\xa^k} - \one\tp\na\FF\pr{\XX^k} }  
        \leq \frac{1}{n}\sum_{i=1}^{n}\nt{\na f_i\pr{\xa^k} - \na f_i\pr{\xx_i^k}}  \\  
        \leq& \frac{L}{n}\sum_{i=1}^{n}\nt{\xa^k - \xx_i^k} 
        \leq \frac{L}{\sqrt{n}} \mt{\one\xa^k - \XX^k} \leq \frac{L}{\sqrt{n}} \mR{\PR\XX^k}.
}

Taking conditional expectation on both sides of (\ref{eq: xa expansion}) yields
\seql{\label{eq: xa square expec 1}}{
        &\expec{\nt{\xa^{k+1} - \xx^*}^2 | \calF_k}  \\
        =& \nt{\xa^k - \altil \gg^k - \xx^* + \altil\pr{\gg^k - \ya^k} - \frac{1}{n}\vecr\tp\aalpha\PC\YY^k}^2
           + \frac{\beta^2}{n^2} \expec{\nt{\vecr\tp \EX^k}^2 | \calF_k }  \\
        \leq& \nt{\xa^k - \altil \gg^k - \xx^* + \altil\pr{\gg^k - \ya^k} - \frac{1}{n}\vecr\tp\aalpha\PC\YY^k}^2 
          + \frac{\cerr{2}\nt{\vecr}^2\beta^2}{n^2} \mt{\XX^k - \UU^k}^2  \\
        \leq& \frac{1}{1 - \altil \mu} \nt{\xa^k - \altil \gg^k - \xx^*}^2 + \frac{2\altil }{\mu} \nt{\gg^k - \ya^k}^2 
         + \frac{2}{\mu \altil n^2} \nt{\vecr\tp\aalpha\PC\YY^k}^2
        + \frac{\cerr{2}\nt{\vecr}^2\beta^2}{n^2} \mt{\XX^k - \UU^k}^2  \\
        \leq& \pr{1 - \altil\mu} \nt{\xa^k - \xx^*}^2 + \frac{2L^2\altil}{\mu n} \mR{\PR\XX^k  }^2  
          + \frac{2\nt{\vecr}^2\almax^2}{\mu n^2 \altil} \mt{\PC\YY^k}^2
         + \frac{\cerr{2}\nt{\vecr}^2\beta^2}{n^2} \mt{\XX^k - \UU^k}^2   \\
        \leq& \pr{1 - c_1 \almax} \nt{\xa^k - \xx^*}^2 + c_2 \almax \mR{\PR\XX^k}^2   
          + c_3\almax\mC{\YY^k - \vecc\ya^k}^2 + c_4\beta^2 \mt{\XX^k - \UU^k}^2  ,
}
where
\seql{\label{eq: def c 1}}{
  c_1 = \frac{\mu}{w},\ c_2 = \frac{2\pr{\vecr\tp\vecc}L^2}{\mu n^2},\ c_3 =\frac{2\nt{\vecr}^2w}{\mu n^2},\ c_4 = \frac{\cerr{2}\nt{\vecr}^2}{n^2}.
}
Here the first equality uses Lemma~\ref{lem: condi expec} to eliminate the cross terms;
the first inequality is by (\ref{eq: bound for Ek});
the second inequality is by $\mt{\AA + \BB + \CC}^2 \leq \theta\mt{\AA}^2 + \frac{2\theta}{\theta - 1}\pr{\mt{\BB}^2 + \mt{\CC}^2}$ which follows by setting $\theta = \frac{1}{1 - \altil\mu}$ in Lemma~\ref{lem: Hilbert mm AM GM};
the third inequality is by (\ref{eq: gradient norm small}), (\ref{eq: dd gg in xa expansion});
and we used the relation $\altil \geq \almax / w$ and the fact $\mt{\PC\YY^k} \leq \mC{\PC\YY^k}$ from Lemma~\ref{def nR nC} in the last inequality.

Bounding $\expec{\mR{\PR\XX^{k+1}}^2}$: \
taking conditional expectation on both sides of (\ref{eq: x - one xa expansion}),
\seql{\label{eq: x consensus bound}}{
        &\expec{\mR{\PR\XX^{k+1}}^2 | \calF_k} \\
        =&\mR{\PR\Rb \PR\XX^k  - \PR\aalpha \YY^k}^2
           + \beta^2 \expec{\mR{\PR\RR\EX^k}^2 | \calF_k}  \\
        \leq& \frac{\nR{\PR\Rb}^2}{1 - \theR\beta}\mR{\PR\XX^k}^2
          + \frac{\nR{\PR}^2}{\theR\beta}\mR{\aalpha\YY^k}^2  
          + \beta^2 \nR{\PR\RR}^2\expec{\mR{\EX^k}^2 | \calF_k}  \\
        \leq& \pr{1 - \theR \beta} \mR{\PR\XX^k}^2 + \frac{c_5\almax^2}{\beta} \mt{\YY^k}^2  
          + c_6\beta^2 \mt{\UU^k - \XX^k}^2,
}
where
\seql{\label{eq: def c 2}}{
    c_5 = \frac{\traRt^2}{\theR},\  c_6 = \cerr{2}\traRt^2 .
}
The first equality uses Lemma~\ref{lem: condi expec} to eliminate cross terms;
the first inequality is by setting $\theta = \frac{1}{1 - \theR\beta}$ in Lemma~\ref{lem: Hilbert mm AM GM};
the last inequality is by Lemma~\ref{def nR nC} and (\ref{eq: bound for Ek}).

Bounding $\expec{\mC{\PC\YY^{k+1}}^2}$: \
using (\ref{eq: X update k+1}), we have
\seq{
  \XX^{k+1} - \XX^k + \beta\RR\WW^k  = \beta\pr{\RR - \II} \XX^k - \aalpha \YY^k
}
is measurable with respect to $\calF_k$.
Now, we have
\seql{\label{eq: XXk+1 XXcalFkmb}}{
     &\mt{\XX^{k+1} - \XX^k + \beta\RR\WW^k }^2  
     = \mt{\beta\pr{\RR - \II} \XX^k - \aalpha \YY^k }^2 
     = \mt{\beta\pr{\RR - \II} \pr{\XX^k - \one\xa^k} - \aalpha \YY^k }^2  \\
     \leq& 2\beta^2\nm{2}{\RR - \II}^2 \mt{\PR\XX^k  }^2 + 2\almax^2\mt{\YY^k }^2  
     \leq 8n\beta^2\mR{\PR\XX^k}^2 + 2\almax^2\mt{\YY^k}^2  ,
}
where
we used the fact $\mt{\cdot} \leq \mR{\cdot}$ from Lemma~\ref{def nR nC} and the fact $\nt{\RR - \II}^2 \leq n\ni{\RR - \II}^2 \leq 4n$ in the last inequality.
Thus, by Lemma~\ref{lem: condi expec} and (\ref{eq: bound for Ek}),
\seql{\label{eq:Xk+1-Xk}}{
     \expec{\mt{\XX^{k+1} - \XX^k }^2|\calF_k}  
     =& \mt{\XX^{k+1} - \XX^k + \beta\RR\EX^k }^2 + \beta^2\expec{\mt{\RR\WW^k }^2|\calF_k}  \\
     \leq&  \mt{\XX^{k+1} - \XX^k + \beta\RR\WW^k }^2 + \cerr{2} n \beta^2\mt{\UU^k - \XX^k}^2  ,
}
where we also used the fact $\nt{\RR}^2 \leq n\ni{\RR}^2 = n $ in the last inequality.

Taking conditional expectation on both sides of (\ref{eq: y - one ya expansion}) yields
\seq{
        &\expec{\mC{\PC\YY^{k+1}}^2 | \calF^+_k } \\
        =&\mC{\PC\Cg \PC\YY^k + \PC\pr{\na \FF\pr{\XX^{k+1}} - \na \FF\pr{\XX^k}}}^2  
         + \gamma^2 \expec{\mC{\pr{\II - \CC} \EE^k }^2 | \calF^+_k }  \\
        \leq& \frac{\nC{\PC\Cg}^2}{1 - \theC\gamma} \mC{ \PC\YY^k }^2  
         + \frac{\nC{\PC}^2}{\theC\gamma}  \mC{ \na \FF\pr{\XX^{k+1}} - \na \FF\pr{\XX^k}}^2 
          + \gamma^2\nC{\II - \CC}^2 \expec{ \mC{\EE^k}^2 | \calF^+_k }  \\
        \leq& \pr{1 - \theC\gamma} \mC{\PC\YY^k}^2 + \frac{c_7}{\gamma} \mt{\XX^{k+1} - \XX^k}^2  
                    + c_8\gamma^2 \mt{\YY^k}^2 ,
}
where
\seql{\label{eq:defc3}}{
  c_7 = \frac{L^2\traCt^2}{\theC},\ c_8 = 9\cerr{2}\traCt^2  .
}
Here the first equality uses Lemma~\ref{lem: condi expec} to eliminate cross terms;
the first inequality is by  Lemma~\ref{lem: Hilbert mm AM GM};
the last inequality above is from (\ref{eq: bound for Ek}), Lemma~\ref{def nR nC} and the $L$-smoothness in Assumption~\ref{assp: require each fi}.

Then, by substituting (\ref{eq: XXk+1 XXcalFkmb}), (\ref{eq:Xk+1-Xk}),
\seql{\label{eq: Yk+1 consensus Fk bound}}{
        &\expec{\mC{\PC\YY^{k+1}}^2 | \calF_k } \\
        \leq& \pr{1 - \theC\gamma} \mC{\PC\YY^k}^2 + \frac{c_7}{\gamma} \expec{\mt{\XX^{k+1} - \XX^k}^2|\calF_k} 
         + c_8\gamma^2 \mt{\YY^k}^2 \\
        \leq& \pr{1 - \theC\gamma} \mC{\PC\YY^k}^2 + \pr{c_8\gamma^2 + \frac{2c_7\almax^2}{\gamma}} \mt{\YY^k}^2  
         + \frac{8n c_7 \beta^2}{\gamma}\mR{\PR\XX^k}^2 + \frac{\cerr{2}n c_7 \beta^2}{\gamma}\mt{\UU^k - \XX^k}^2  .
}

Bounding $\expec{\mt{\XX^k - \UU^k}^2}$: \
%
by (\ref{eq: U update k+1}) and taking conditional expectation, we have
\seql{\label{eq: UU bound}}{
     &\expec{\mt{\XX^{k+1} - \UU^{k+1}}^2 | \calF_k } \\
     =& \mathbb{E}\Big[\mmB(1 - \eta)\pr{\XX^k - \UU^k} + \XX^{k+1} - \XX^k  
      + \beta\RR\WW^k + \pr{\eta\II - \beta\RR} \EX^k\mmB_2^2 | \calF_k \Big] \\
     \leq& \mt{(1 - \eta)\pr{\XX^k - \UU^k} + \XX^{k+1} - \XX^k + \beta\RR\WW^k}^2 
              + \nt{\eta\II - \beta\RR}^2 \expec{\mt{\EX^k}^2 | \calF_k }  \\
    \leq& \mt{(1 - \eta)\pr{\XX^k - \UU^k} + \XX^{k+1} - \XX^k + \beta\RR\WW^k}^2  
     + 2\cerr{2}\pr{\eta^2 + n\beta^2}\mt{\XX^k - \UU^k}^2  \\
    \leq& \pr{1 - \eta}\mt{\XX^k - \UU^k}^2 + \frac{1}{\eta}\mt{\XX^{k+1} - \XX^k + \beta\RR\WW^k}^2 
     + 2\cerr{2}\pr{\eta^2 + n\beta^2} \mt{\XX^k - \UU^k}^2  \\
    \leq& \pr{1 - \eta + 2\cerr{2}\eta^2 + 2n\cerr{2}\beta^2 } \mt{\XX^k - \UU^k}^2  
     + \frac{8n\beta^2}{\eta}\mR{\PR\XX^k}^2 + \frac{2\almax^2}{\eta}\mt{\YY^k}^2 ,
}
where
the first inequality is by Lemma~\ref{lem: condi expec};
the second inequality is by (\ref{eq: bound for Ek}) and the fact $\nt{\RR}^2 \leq n\ni{\RR}^2 = n $;
the third inequality uses Lemma~\ref{lem: Hilbert mm AM GM};
the last inequality is by substituting (\ref{eq: XXk+1 XXcalFkmb}).

Now, we bound the term $\mt{\YY^k}$ which occurs frequently above.
By Assumption~\ref{assp: require each fi} and the fact $\one\tp\na\FF\pr{\XX^*} = \zero\tp$, we have
\seql{\label{eq: gg in xa expansion}}{
       \nt{\gg^k} &= \frac{1}{n}\nt{\one\tp\na\FF\pr{\one\xa^k} - \one\tp\nabla\FF\pr{\XX^*}} 
       \leq \frac{L}{n} n \nt{\xa^k - \xx^*} = L\nt{\xa^k - \xx^*}.
}
By decomposing $\YY^k = \pr{\YY^k - \vecc\ya^k} + \vecc\pr{\ya^k - \gg^k} + \vecc\gg^k $ and using (\ref{eq: dd gg in xa expansion})(\ref{eq: gg in xa expansion}), we have
\seql{\label{eq: mt YYk}}{
     \mt{\YY^k}^2  
    \leq& 3\mt{\PC\YY^k}^2 + 3\nt{\vecc}^2\nt{\ya^k - \gg^k}^2 + 3\nt{\vecc}^2\nt{\gg^k}^2  \\
    \leq& 3\mC{\PC\YY^k}^2 + c_9\mR{\PR\XX^k}^2 + c_{10}\nt{\xa^k - \xx^*}^2,
}
where
\seql{\label{eq:defc4}}{
  c_9 = \frac{3L^2\nt{\vecc}^2}{n},\ c_{10} = 3L^2\nt{\vecc}^2  .
}

The relation (\ref{eq:cYmat1}) is by taking expectation on both sides of (\ref{eq: mt YYk}).
Combining (\ref{eq: xa square expec 1}), (\ref{eq: x consensus bound}), (\ref{eq: Yk+1 consensus Fk bound}) and (\ref{eq: UU bound}) and taking full expectation yields (\ref{eq:cmat2}). \qed

\subsection{Proof of Lemma~\ref{lem:bmatineq1}}\label{sec:pfbmatineq1}
Define
\begin{align*}
   & \AW = \expec{\AW^k | \calD_k},
    \Vara = \expec{\nt{\II - \RW^k + \RU^k - \RR}^2|\calD_k}, 
    \Varb = \expec{\nt{\RU^k - \RP^k}^2|\calD_k},
    \Varc = \expec{\nt{\RP^k}^2|\calD_k} , \\
   & \Vard = \expec{\nt{\AW - \AW^k}^2 | \calD_k},
    \Vare = \expec{\nt{\CQ^k - \IW^k}^2|\calD_k}, 
     \Varf = \expec{\nt{\II - \CC + \CQ^k - \IW^k}^2|\calD_k}  .
\end{align*}
Note that $\AW$, $\Vara$-$\Varf$ are constants independent of $k$.

It follows by Lemma~\ref{assp:bindept.} that
\seql{\label{eq:bmexzero}}{
  \begin{split}
      & \expec{\II - \RW^k + \RU^k - \RR |\calD_k} = 0,\
       \expec{\RU^k - \RP^k|\calD_k} = 0 .
  \end{split}
}

Since $\one\tp\pr{\CQ^k - \IW^k} = \zero\tp$, by induction, there holds
$
   \ya^k = \frac{1}{n}\one\tp\gF{k} .
$
Now, we can expand (\ref{eq:bXupdate}) as
\seql{\label{eq:bXexpand1}}{
     \XX^{k+1} =& \Rb\XX^k - \alp\AW\YY^k + \alp\pr{\AW - \AW^k}\YY^k  
       + \beta\pr{\II - \RW^k + \RU^k - \RR}\XX^k
       + \beta\RP^k\pr{\UU^k + \PP^k - \XX^k } \\
       & + \beta\pr{\RU^k - \RP^k}\pr{\UU^k - \XX^k}
        + \beta\pr{\RW^k\UR^k - \RU^k\UU^k}  \\
      =& \Rb\XX^k - \alp\AW\YY^k + \ro^k  ,
}
where
\seql{\label{defro1}}{
  \ro^k =& \alp\pr{\AW - \AW^k}\YY^k
       + \beta\pr{\II - \RW^k + \RU^k - \RR}\PR\XX^k  
        + \beta\pr{\RU^k - \RP^k}\pr{\UU^k - \XX^k} - \beta\RP^k\EX^k ,
}
and in the second equality, we used the fact $\RW^k\UR^k = \RW^k\RR\UU^k = \RU^k\UU^k$ and $\pr{\II - \RW^k + \RU^k - \RR}\one = \zero$.

By multiplying $\frac{\vecr\tp}{n}$ on both sides of (\ref{eq:bXexpand1}) and defining $\alb = \frac{\alp\vecr\tp\AW\vecc}{n}$,
\seq{
     &\xa^{k+1} = \xa^k - \frac{\alp}{n}\vecr\tp\AW\YY^k + \frac{1}{n}\vecr\tp\ro^k  
       = \xa^k - \alb\gg^k + \alb\pr{\gg^k - \ya^k}
         - \frac{\alp}{n}\vecr\tp\AW\PC\YY^k + \frac{1}{n}\vecr\tp\ro^k  .
}
By Lemma 10 in~\cite{qu2017harnessing}, for $\alb \leq \frac{2}{\mu + L}$,
\seql{\label{eq: gradient norm small broadcast}}{
  \nt{\xa^k - \alb \gg^k - \xx^*} \leq \pr{1 - \alb \mu} \nt{\xa^k - \xx^*}.
}
Using (\ref{eq:bmexzero}) with the fact $\expec{\EX^k | \calD_k^+} = \zero$ from Lemma~\ref{assp:bindept.}, we have
\seql{\label{eq:rocalDk0}}{
  \expec{\ro^k | \calD_k} = 0 .
}
By taking conditional expectation and using (\ref{eq:rocalDk0}) to eliminate cross-terms,
\seq{
    &\expec{\nt{\xa^{k+1} - \xx^*}^2 | \calD_k}
    =  \expec{\nt{\frac{1}{n}\vecr\tp\ro^k}^2 | \calD_k} 
    + \nt{\xa^k - \alb\gg^k - \xx^* + \alb\pr{\gg^k - \ya^k} - \frac{\alp}{n}\vecr\tp\AW\PC\YY^k}^2 .
}

Similarly to (\ref{eq: xa square expec 1}), there holds
\seql{\label{eq:bxaup}}{
     &\expec{\nt{\xa^{k+1} - \xx^*}^2 | \calD_k}  \\
     \leq&\nt{\xa^k - \alb\gg^k - \xx^* + \alb\pr{\gg^k - \ya^k} - \alp\vecr\tp\AW\PC\YY^k}^2 
       + \frac{\nt{\vecr}^2}{n^2}\expec{\nt{\ro^k}^2 | \calD_k}  \\
     \leq& \frac{1}{1 - \alb\mu}\nt{\xa^k - \alb\gg^k - \xx^*}^2 + \frac{2\alb}{\mu}\nt{\gg^k - \ya^k}^2 
      + \frac{2\alb}{\mu}\nt{\vecr\tp\AW\PC\YY^k}^2
       + \frac{\nt{\vecr}^2}{n^2}\expec{\nt{\ro^k}^2 | \calD_k} \\
     \leq& \pr{1 - \alb\mu}\nt{\xa^k - \xx^*}^2 + \frac{2\alb L^2}{\mu n}\mR{\PR\XX^k}^2  
      + \frac{2\alb}{\mu}\nt{\vecr}^2\nt{\AW}^2\mt{\PC\YY^k}^2
       + \frac{\nt{\vecr}^2}{n^2}\expec{\nt{\ro^k}^2 | \calD_k}  \\
     \leq& \pr{1 - d_1\alp}\nt{\xa^k - \xx^*}^2 + d_2\alp\mR{\PR\XX^k}^2  
      + d_3\alp\mC{\PC\YY^k}^2 + d_4\expec{\nt{\ro^k}^2 | \calD_k} ,
}where
\seql{\label{defd1}}{
  & d_1 = \frac{\vecr\tp\AW\vecc \mu}{n},\ d_2 = \frac{2\vecr\tp\AW\vecc L^2}{\mu n^2},  
    d_3 = \frac{2\vecr\tp\AW\vecc \nt{\vecr}^2}{\mu n},
   d_4 = \frac{\nt{\vecr}^2}{n^2}  .
}
The second inequality above is by Lemma~\ref{lem: Hilbert mm AM GM};
the third inequality used (\ref{eq: dd gg in xa expansion}) and (\ref{eq: gradient norm small broadcast});
the last inequality above used the relation $\nt{\AW} < 1$ which follows from the fact that $\AW$ is a diagonal matrix with all diagonal entries within $(0, 1)$.

To bound $\expec{\mR{\PR\XX^{k+1}}|\calD_k}$, we first multiply $\PR$ on both sides of (\ref{eq:bXexpand1}), which yields
\seq{
     &\PR\XX^{k+1} =
     \PR\Rb \PR\XX^k - \alp\PR\AW\YY^k + \PR\ro^k  .
}
Using (\ref{eq:rocalDk0}) to eliminate cross-terms, we obtain
\seq{
     \expec{\mR{\PR\XX^{k+1}}^2|\calD_k} =&
     \mR{\PR\Rb \PR\XX^k - \alp\PR\AW\YY^k}^2  
      + \expec{\mR{\PR\ro^k}^2 | \calD_k}  .
}
Thus, we have
\seql{\label{eq:bXcon1}}{
   & \expec{\mR{\PR\XX^{k+1}}^2|\calD_k}  \\
   \leq&\mR{\PR\Rb \PR\XX^k - \alp\PR\AW\YY^k}^2 + \nR{\PR}^2\expec{\mR{\ro^k}^2 | \calD_k}  \\
   \leq& \frac{\nR{\PR\Rb}^2}{1 - \beta\theR}\mR{\PR\XX^k}^2 + \frac{\alp^2\nR{\PR}^2\nR{\AW}^2}{\beta\theR}\mR{\YY^k}^2 
    + \nR{\PR}^2\expec{\mR{\ro^k}^2 | \calD_k}  \\
   \leq& \pr{1 - \theR\beta}\mR{\PR\XX^k  }^2 + \frac{\alp^2}{\beta\theR}\mR{\YY^k}^2 
    + \traRt^2\expec{\mt{\ro^k}^2 | \calD_k} ,
}
where the second inequality is by Lemma~\ref{lem: Hilbert mm AM GM};
the last inequality uses Lemma~\ref{def nR nC} and Lemma~\ref{lem: Hilbert mm AM GM}.

Next, we proceed to bound $\expec{\mt{\XX^{k+1} - \XX^k}^2|\calD_k}$.
By (\ref{eq:bXexpand1}) and the fact $\pr{\RR - \II}\one = \zero$,
\seq{
     \XX^{k+1} - \XX^k &= \beta\pr{\RR - \II}\XX^k - \alp\AW\YY^k + \ro^k  
      = \beta\pr{\RR - \II}\PR\XX^k - \alp\AW\YY^k + \ro^k.
}
Again, by taking conditional expectation and using (\ref{eq:rocalDk0}) to eliminate the cross-terms,
\seql{\label{eq:bXk+1kdiff}}{
     &\expec{\mt{\XX^{k+1} - \XX^k}^2 | \calD_k} \\
     =& \mt{\beta\pr{\RR - \II}\PR\XX^k - \alp\AW\YY^k}^2 + \expec{\mt{\ro^k}^2|\calD_k}   \\
     \leq& 2\beta^2\nR{\RR - \II}^2\mR{\PR\XX^k}^2 + 2\alp^2\nt{\AW}^2\mt{\YY^k}^2 
      + \expec{\mt{\ro^k}^2|\calD_k}  \\
     \leq& 18\beta^2\mR{\PR\XX^k}^2 + 2\alp^2\mt{\YY^k}^2
     + \expec{\mt{\ro^k}^2|\calD_k}  ,
}
where we used the fact $\nR{\RR - \II} \leq 3  $ from Lemma~\ref{def nR nC} in the last inequality.
\newcommand\ctc{\calW}
To bound $\expec{\mC{\PC\YY^{k+1}}^2|\calD_k}$, we first expand (\ref{eq:bYupdate}) using $\QQ^k = \YY^k - \EY^k$, then
\seq{
       \YY^{k+1} =& \YY^k + \gamma\pr{\CQ^k - \IW^k}\pr{\YY^k - \EY^k}  + \gF{k+1} - \gF{k}   \\
       =& \ctc^k - \gamma\pr{\CQ^k - \IW^k}\EY^k
         + \gF{k+1} - \gF{k},
}
where $\ctc^k = \Cg\YY^k + \gamma\pr{\II - \CC + \CQ^k - \IW^k}\YY^k. $

By multiplying $\PC$, we have
\seq{
        &\PC\YY^{k+1} = \PC\ctc^k + \PC\pr{\gF{k+1} - \gF{k}} 
         - \gamma\PC\pr{\CQ^k - \IW^k}\EY^k  .
}
Then, taking conditional expectation on both sides of the above equation and using (\ref{eq:bEY}) yield
\begin{equation*}
  \small
  \begin{split}
     &\expec{\mC{\PC\YY^{k+1}}^2|\calD_k^{++}} \\
     =& \mC{\PC\ctc^k + \PC\pr{\gF{k+1} - \gF{k}}}^2  
      + \gamma^2\expec{\mC{\PC\pr{\CQ^k - \IW^k}\EY^k}^2|\calD_k^{++}} \\
     \leq& \mC{\PC\ctc^k + \PC\pr{\gF{k+1} - \gF{k}}}^2  
      + \frac{\cerr{2}\Vare\traCt^2\gamma^2}{n}\mt{\YY^k}^2 ,
  \end{split}
\end{equation*}
where we also used $\nC{\PC} = 1$ from Lemma~\ref{def nR nC} in the last inequality.

It follows by the definitions that
$
  \expec{\II - \CC + \CQ^k - \IW^k | \calD_k} = \zero .
$
Thus,
\seq{
       \expec{\mC{\PC\ctc^k}^2 | \calD_k}
      =& \mC{\PC\Cg \PC\YY^k}^2  
       + \gamma^2\expec{\mC{\PC\pr{\II - \CC + \CQ^k - \IW^k}\YY^k}^2 | \calD_k}  \\
      \leq& \pr{1 - \theC\gamma}^2\mC{\PC\YY^k}^2 + \gamma^2\Varf\traCt^2\mt{\YY^k}^2  .
}

By combining the above equations and using the $L$-smoothness, we obtain
\begin{equation}\label{eq:bYcon1}
    \small
    \begin{split}
        &\expec{\mC{\PC\YY^{k+1}}^2|\calD_k} \\  
      \leq& \frac{1}{1 - \theC\gamma}\mathbb{E}\Big[\mmB\PC\Cg \PC\YY^k  
       + \gamma\PC\pr{\II - \CC + \CQ^k - \IW^k}\YY^k\mmB_{\rmC}^2 | \calD_k\Big]  \\ 
       & + \frac{\traCt^2}{\theC\gamma}\expec{\mt{\gF{k+1} - \gF{k}}^2|\calD_k}  
       + \frac{\cerr{2}\Vare\traCt^2\gamma^2}{n}\mt{\YY^k}^2  \\
      \leq& \pr{1 - \theC\gamma}\mC{\PC\YY^k}^2 + \frac{d_5\gamma^2}{1 - \theC\gamma}\mt{\YY^k}^2  
        + \frac{d_6}{\gamma}\expec{\mt{\XX^{k+1} - \XX^k}^2|\calD_k} + d_7\gamma^2 \mt{\YY^k}^2  , 
    \end{split}
\end{equation}
where
\seql{\label{defd2}}{
  d_5 = \Varf\traCt^2,\ d_6 = \frac{\traCt^2L^2}{\theC},\ d_7 = \frac{\cerr{2}\Vare\traCt^2}{n}  .
}

To bound $\expec{\mt{\UU^{k+1} - \XX^{k+2}}^2|\calD_k}$, we expand (\ref{eq:bUupdate}) using the relation $\EX^k = \XX^k - \PP^k - \UU^k$, i.e.,
\seq{
  \UU^{k+1} = \pr{1 - \eta}\UU^k + \eta\pr{\II - \IW^k}\UU^k + \eta\IW^k\XX^k - \eta\IW^k\EX^k .
}
Then,
\seq{
        &\UU^{k+1} - \XX^{k+1} \\
         =& \pr{1 - \eta}\pr{\UU^k - \XX^k}  
         + \eta\pr{\II - \IW^k}\pr{\UU^k - \XX^k} - \pr{\XX^{k+1} - \XX^k} - \eta\IW^k\EX^k  \\
        =& \pr{1 - \eta}\pr{\UU^k - \XX^k} + \eta\pr{\II - \IW^k}\pr{\UU^k - \XX^k} 
         - \pr{\XX^{k+1} - \XX^k + \beta\RP^k\EX^k} - \pr{\eta\IW^k - \beta\RP^k}\EX^k  .
}
By the definition of $\ro^k$ in (\ref{defro1}), we have
\seq{
    &\ro^k + \beta\RP^k\EX^k  
    = \alp\pr{\AW - \AW^k}\YY^k + \beta\pr{\RU^k - \RP^k}\pr{\UU^k - \XX^k}  
     + \beta\pr{\II - \RW^k + \RU^k - \RR}\PR\XX^k
}
is measurable with respect to $\calD_k^+$.
Then,
\seq{
    \XX^{k+1} - \XX^k + \beta\RP^k\EX^k &= \beta\pr{\RR - \II}\PR\XX^k  
     - \alp\AW\YY^k + \ro^k + \beta\RP^k\EX^k
}
is measurable with respect to $\calD_k^+$.
Define $\uxa^k = \pr{1 - \eta}\pr{\UU^k - \XX^k} + \eta\pr{\II - \IW^k}\pr{\UU^k - \XX^k}. $
Now, by taking conditional expectation and using (\ref{eq:bEX}),
\seq{
      & \expec{\mt{\UU^{k+1} - \XX^{k+1}}^2|\calD_k^+} \\
      =& \mmB\uxa^k
        - \pr{\XX^{k+1} - \XX^k + \beta\RP^k\EX^k}\mmB^2 
       + \expec{\mt{\pr{\eta\IW^k - \beta\RP^k}\EX^k}^2|\calD_k^+}  \\
      \leq& \mmB\uxa^k
       - \pr{\XX^{k+1} - \XX^k + \beta\RP^k\EX^k}\mmB^2 
        + 2\cerr{2}\pr{\eta^2n + \frac{\beta^2}{n}\nt{\RP^k}^2 }\mt{\UU^k - \XX^k}^2  ,
}
where we also used the fact $\nt{\IW^k} = n$ in the last inequality.

Since $\expec{\II - \IW^k|\calD_k} = \zero$ and $\nt{\IW^k - \II} = n - 1$,
\seq{
      &\expec{\mt{\uxa^k}^2|\calD_k}  \\
      =& \pr{1 - \eta}^2\mt{\UU^k - \XX^k}^2  
         + \eta^2\expec{\mt{\pr{\II - \IW^k}\pr{\UU^k - \XX^k}}^2|\calD_k}   \\
      \leq& \pr{1 - \eta}^2\mt{\UU^k - \XX^k}^2   
         + \eta^2\expec{\nt{\II - \IW^k}^2|\calD_k}\mt{\UU^k - \XX^k}^2  \\
      =& \pr{1 - \eta}^2\mt{\UU^k - \XX^k}^2 + \eta^2\pr{n-1}^2\mt{\UU^k - \XX^k}^2  .
}
And
\seq{
     &\expec{\mt{\XX^{k+1} - \XX^k + \beta\RP^k\EX^k}^2|\calD_k} \\
     \leq& 2\expec{\mt{\XX^{k+1} - \XX^k}^2|\calD_k} + 2\beta^2\expec{\mt{\RP^k\EX^k}|\calD_k}     \\
     \leq& 2\expec{\mt{\XX^{k+1} - \XX^k}^2|\calD_k} + \frac{2\cerr{2}\beta^2\Varc}{n}\mt{\UU^k - \XX^k}^2  .
}

Combining the above equations, we have
\begin{equation}\label{eq:bUXcon1}
  \small
  \begin{split}
      & \expec{\mt{\UU^{k+1} - \XX^{k+1}}^2|\calD_k}  \\
      \leq& \frac{1}{1 - \eta}\expec{\mt{\uxa^k}^2|\calD_k}
        + \frac{1}{\eta}\expec{\mt{\XX^{k+1} - \XX^k + \beta\RP^k\EX^k}^2|\calD_k}  
       + 2\cerr{2}\pr{\eta^2n + \frac{\beta^2\Varc}{n} }\mt{\UU^k - \XX^k}^2  \\
      \leq& \pr{1 - \eta + \frac{\eta^2(n-1)^2}{1 - \eta} + d_8\frac{\beta^2}{\eta} + 2\cerr{2}n\eta^2 + d_8\beta^2}\mt{\UU^k - \XX^k}^2  
         + \frac{2}{\eta}\expec{\mt{\XX^{k+1} - \XX^k}^2|\calD_k}  ,
  \end{split}
\end{equation}
where
\seql{\label{eq:defd3}}{
  d_8 = \frac{2\cerr{2}\Varc}{n}.
}

For the variable $\ro^k$ defined in (\ref{defro1}), we have
\begin{equation}\label{eq:bxat2}
    \small
  \begin{split}
      & \expec{\mt{\ro^k}^2 | \calD_k} \\
      =& \mathbb{E}\Big[\mmB\alp\pr{\AW - \AW^k}\YY^k + \beta\pr{\II - \RW^k + \RU^k - \RR}\PR\XX^k  
          + \beta\pr{\RU^k - \RP^k}\pr{\UU^k - \XX^k} \mmB^2 | \calD_k\Big] \\  
       & + \beta^2\expec{\mt{\RP^k\EX^k}^2|\calD_k}  \\
      \leq& 3\alp^2\Vard\mt{\YY^k}^2 + 3\beta^2\expec{\nt{\II - \RW^k + \RU^k - \RR}^2|\calD_k}\mR{\PR\XX^k}^2  
        + 3\beta^2\expec{\nt{\RU^k - \RP^k}^2|\calD_k}\mt{\UU^k - \XX^k}^2  \\
         & + \beta^2\expec{\nt{\RP^k}^2 \expec{\mt{\EX^k}^2|\calD_{k}^+} | \calD_k}   \\
      \leq& 3\alp^2\Vard\mt{\YY^k}^2 + 3\beta^2\Vara\mR{\PR\XX^k}^2 + d_{9}\beta^2\mt{\UU^k - \XX^k}^2  ,
  \end{split}
\end{equation}
where
\seql{\label{eq:defd4}}{
  d_{9} = 3\Varb +\frac{\cerr{2}\Varc}{n} .
}
Here we used the fact $\expec{\EX^k | \calD_k^+} = \zero$ from Lemma~\ref{assp:bindept.} in the first equality and (\ref{eq:bEX}) in the last inequality.

The relation (\ref{eq:bYkAb1}) is by taking full expectation on both sides of (\ref{eq: mt YYk});
the relation (\ref{eq:brokAb1}) is derived by taking full expectation on both sides of (\ref{eq:bxat2});
the relation (\ref{eq:bXk+1kAb1}) is by taking full expectation on both sides of (\ref{eq:bXk+1kdiff});
the relation (\ref{eq:bdk+1Ab1}) can be derived by combining (\ref{eq:bxaup}), (\ref{eq:bXcon1}), (\ref{eq:bYcon1}) and (\ref{eq:bUXcon1}) and taking full expectation.\qed

\end{document}